\documentclass[10pt]{amsart}
\usepackage{amssymb}
\usepackage{bm}
\usepackage{graphicx}
\usepackage[centertags]{amsmath}
\usepackage{amsmath}
\usepackage{amsfonts}
\usepackage{amsthm}
\usepackage{amsbsy}
\usepackage{mathtools}
\usepackage{mathrsfs}
\usepackage{cases}
\usepackage{xfrac}
\usepackage[all]{xy}
\usepackage[linktocpage=true]{hyperref}
\usepackage{hyperref}
\usepackage{dsfont}
\usepackage{bbm}
\usepackage{enumitem}
\hypersetup{
    colorlinks=true,
    linkcolor=blue,
    filecolor=blue,
    citecolor=blue,
    urlcolor=cyan}
\usepackage[margin=3.5cm]{geometry}
\newtheorem{thm}{Theorem}[section]
\newtheorem{cor}[thm]{Corollary}
\newtheorem{lem}[thm]{Lemma}

\newtheorem{prop}[thm]{Proposition}
\newtheorem{claim}[thm]{Claim}
\newtheorem{fact}[thm]{Fact}
\newtheorem*{DPHJ-conjecture}{Density polynomial Hales--Jewett conjecture}

\newtheorem{problem}[thm]{Problem}
\newtheorem{defn}[thm]{Definition}

\theoremstyle{remark}
\newtheorem{rem}[thm]{Remark}

\DeclarePairedDelimiterX\Set[1]\{\}{%

#1
}

\newcommand{\meg}{\geqslant}
\newcommand{\mik}{\leqslant}
\newcommand{\ave}{\mathbb{E}}
\newcommand{\prob}{\mathbb{P}}

\newcommand{\Ecal}{\mathcal{E}}

\newcommand{\Gcal}{\mathcal{G}}

\newcommand{\Ccal}{\mathcal{C}}

\newcommand{\Hcal}{\mathcal{H}}

\newcommand{\Ical}{\mathcal{I}}
\newcommand{\Xcal}{\mathcal{X}}
\newcommand{\Ftwo}[1]{{\mathbb{F}_2^{#1}}}

\DeclareMathOperator{\conj}{conj}

\begin{document}

\title[Uniformity of extremal graph-codes]{Uniformity of extremal graph-codes}

\author{No\'{e} de Rancourt, Pandelis Dodos and Konstantinos Tyros}

\address{Universit\'{e} de Lille, CNRS, UMR 8524, Laboratoire Paul Painlev\'{e}, F-59 000 Lille, France}
\email{nderancour@univ-lille.fr}

\address{Department of Mathematics, University of Athens, Panepistimiopolis 157 84, Athens, Greece}
\email{pdodos@math.uoa.gr}

\address{Department of Mathematics, University of Athens, Panepistimiopolis 157 84, Athens, Greece}
\email{ktyros@math.uoa.gr}

\thanks{2010 \textit{Mathematics Subject Classification}: 05D05, 11B30, 11T06.}
\thanks{\textit{Key words}: graph-codes, density polynomial Hales--Jewett conjecture, discrete Fourier analysis, Gowers uniformity norms, nonclassical polynomials.}


\begin{abstract}
It is an important fact that extremal discrete structures---that is, discrete structures of maximal size among those that avoid certain configurations---exhibit strong pseudorandom behavior. We present instances of this phenomenon in the context of \emph{graph-codes}, a notion put forth recently by Alon, as well as on problems related to the density polynomial Hales--Jewett conjecture.
\end{abstract}

\maketitle



\section{Introduction} \label{sec1}

\numberwithin{equation}{section}

Extremal discrete structures---that is, discrete structures of maximal size among those that avoid certain configurations---are known to exhibit, or conjectured to have, strong pseudorandomness properties. A well-known and beautiful conjecture in this direction, due to S\'{o}s \cite{Sos13}, predicts that, for all $k$ sufficiently large, any graph on $\mathrm{R}(k)-1$ vertices\footnote{Here, for any integer $k\meg 2$, $\mathrm{R}(k)$ denotes the corresponding diagonal Ramsey number, that is, the least positive integer $n$ such that any two-coloring of the complete graph on $n$ vertices has a monochromatic clique of size $k$. (See, also, Theorem \ref{thm-Ramsey-main} in the main text.)} with no clique or independent set of order $k$, is necessarily quasirandom in the sense of Chung--Graham--Wilson \cite{CGW88,CGW89}.

The goal of this paper is to unravel similar phenomena in the context of the emerging field of \emph{graph-codes}---a notion put-forth, very recently, by Alon \cite{Alon24}---as well as on problems related to the \emph{density polynomial Hales--Jewett conjecture}, a fundamental conjecture in Ramsey theory due to Bergelson \cite{Ber96}.

\subsection{Graph-codes, $\mathrm{HJ}$-codes, and related problems} \label{subsec1.1}

To put things in a proper context, we begin by introducing some pieces of notation and terminology. For every integer $n\meg 1$, we set $[n]:=\{1,\dots,n\}$; by convention, we set $[0]:=\emptyset$. For every finite set $V$ and every integer $r\meg 1$, by $\binom{V}{r}$ we denote the set of all subsets of $V$ with exactly $r$ elements, and by $\binom{V}{\leqslant r}$ the set of all \emph{nonempty} subsets of $V$ with at most $r$ elements; that~is,
\begin{equation} \label{e1.1}
\binom{V}{\leqslant r} := \binom{V}{1}\cup \dots \cup\binom{V}{r}.
\end{equation}

\subsubsection{Graphs} \label{subsubsec1.1.1}

All graphs in this paper have no multiple edges, but we allow the existence of self-loops. Thus, given a nonempty vertex set $V$, a graph $G$ on $V$ is a subset $\binom{V}{\mik 2}$; we shall refer to the elements of $G\cap \binom{V}{2}$ as its \textit{edges}, and the elements of $G \cap \binom{V}{1}$ as its \textit{self-loops}. The \textit{spanning vertex set} $V(G)$ of $G$ is defined by setting
\begin{equation} \label{e1.2}
V(G) := \bigcup_{e \in G} e.
\end{equation}
Two graphs $G$ and $H$ are said to be \emph{isomorphic} if there exists a bijection $\phi \colon V(G) \to V(H)$ such that
\[ \{x, y\} \in G \ \ \ \text{ if and only if } \ \ \ \big\{\phi(x), \phi(y)\big\} \in H, \]
for all $x, y \in V(G)$; in particular, for all $x \in V(G)$, we have $\{x\} \in G$ if and only if $\{\phi(x)\} \in H$.

\subsubsection{Symmetric difference of graphs} \label{subsubsec1.1.2}

If $G_1, G_2$ are graphs on the same vertex set~$V$, then~let
\begin{equation} \label{e1.3}
G_1 + G_2 := (G_1 \setminus G_2) \cup (G_2 \setminus G_1)
\end{equation}
denote their \textit{symmetric difference}. If, in addition, we have $G_1\supseteq G_2$, then observe that $G_1+G_2$ coincides with the \emph{difference} $G_1 \setminus G_2$ of $G_1$ and $G_2$.

\subsubsection{Spaces of graphs} \label{subsubsec1.1.3}

For every positive integer $n$, we naturally identify, via indicator functions, loopless graphs on $[n]$ with elements of the abelian group $\mathbb{F}_2^{\binom{[n]}{2}}$; respectively, we identify graphs on $[n]$ that are not necessarily loopless with elements of $\mathbb{F}_2^{\binom{[n]}{\leqslant 2}}$. Note that, with these identifications, the operation of symmetric difference between graphs introduced in~\eqref{e1.3} corresponds to addition in the vector spaces $\mathbb{F}_2^{\binom{[n]}{2}}$ and $\mathbb{F}_2^{\binom{[n]}{\leqslant 2}}$; this observation actually justifies our notation in \eqref{e1.3}.

If $\Ical$ is a nonempty finite set---such as, $\binom{[n]}{2}$ and $\binom{[n]}{\mik 2}$---then by $\prob$ we denote the uniform probability measure on $\mathbb{F}_2^{\Ical}$. That is, for every $\Gcal\subseteq \mathbb{F}_2^{\Ical}$,
\begin{equation} \label{e1.4}
\prob[\Gcal]:= \frac{|\Gcal|}{2^{|\Ical|}};
\end{equation}
the probability $\prob[\Gcal]$ will often be referred to as the \textit{density} of $\Gcal$. The expectation of a function $f \colon \mathbb{F}_2^{\Ical} \to \mathbb{C}$ relative to $\prob$ will be denoted by $\mathbb{E}_{x \in \mathbb{F}_2^{\Ical}}[f(x)]$, or simply by $\mathbb{E}[f]$ if the index set $\Ical$ is understood from the context.

\subsubsection{Graph-codes} \label{subsubsec1.1.4}

We are now in a position to recall the notion of a graph-code.

\begin{defn}[$\mathcal{H}$-codes and $\mathcal{H}$-$\mathrm{HJ}$-codes] \label{d1.1}
Fix a collection $\Hcal$ of nonempty graphs, and let\, $\mathcal{G}$ be a collection of graphs on the same vertex set $V$.
\begin{enumerate}
\item[(i)] We say that $\mathcal{G}$ is an \emph{$\mathcal{H}$-code} if for every $G_1, G_2\in \mathcal{G}$, the graph $G_1 + G_2$ is not isomorphic to a graph in $\mathcal{H}$.
\item[(ii)] Respectively, we say that $\mathcal{G}$ is an \emph{$\mathcal{H}$-$\mathrm{HJ}$-code}\footnote{The acronym ``HJ" comes from ``Hales--Jewett".} if for every $G_1, G_2\in \mathcal{G}$ with $G_1\supseteq G_2$, the graph $G_1 + G_2$ is not isomorphic to a graph in $\mathcal{H}$.
\end{enumerate}
If\, $\mathcal{H}=\{H\}$ is a singleton, then we simply say that\, $\mathcal{G}$ is an \emph{$H$-code} or an \emph{$H$-$\mathrm{HJ}$-code}, respectively.
\end{defn}

\begin{rem} \label{r1.2}
Graph-codes were introduced by Alon \cite{Alon24}, but they have also appeared earlier in the comments section of Gowers' blog post \cite{Gow09}; we refer to~\cite{Alon24} for a detailed discussion on the history of this notion and its connections  with classical problems in extremal combinatorics. $\mathrm{HJ}$-codes have not been formally introduced in the literature (in particular, our terminology is not standard), but they have also appeared in the discussion in \cite{Gow09}. As it was pointed out in \cite{Gow09}, and we shall also explicitly see in Appendix \ref{sec-appendix}, $\mathrm{HJ}$-codes are closely related to the first unknown case of the density polynomial Hales--Jewett conjecture.
\end{rem}

There are several basic problems for graph-codes and $\mathrm{HJ}$-codes, and we shall recall those which are more relevant to the contents of this paper.

For any positive integer $r$, set
\begin{equation} \label{e1.5}
K_r:=\binom{[r]}{2} \ \ \ \text{ and } \ \ \ K_r^\circ:=\binom{[r]}{\mik 2};
\end{equation}
namely, $K_r$ denotes the complete loopless graph on $r$ vertices, and $K_r^\circ$ denotes the complete graph on $r$ vertices with all possible self-loops. Notice that $K_1$ is empty, but $K_1^\circ$ is nonempty.

\begin{problem}[Alon/Gowers] \label{p1.3}
If\, $\mathcal{G} \subseteq \mathbb{F}_2^{\binom{[n]}{2}}$ is a $K_4$-code, then\footnote{Here, $o_{n\to\infty}(1)$ denotes a quantity that goes to zero as $n$ tends to infinity. More generally, given a collection $C_1,\dots, C_r$ of parameters, $o_{C_1,\dots,C_r;n\to\infty}(1)$ denotes a quantity that goes to zero as $n$ tends to infinity with a rate of convergence that depends on the parameters $C_1,\dots, C_r$.} $\prob[\Gcal]=o_{n\to\infty}(1)$.
\end{problem}

Problem \ref{p1.3} is, arguably, the simplest open problem concerning graph-codes. We also have the following far reaching extension of Problem \ref{p1.3}.

\begin{problem}[Alon] \label{p1.4}
Let $H$ be a nonempty loopless graph with even number of edges. If\, $\mathcal{G} \subseteq \mathbb{F}_2^{\binom{[n]}{2}}$ is an $H$-code, then\, $\prob[\Gcal]=o_{H;n\to\infty}(1)$.
\end{problem}

\begin{rem} \label{r1.5}
If $W$ is any loopless graph with \emph{odd} number of edges, then the family $\Ecal\subseteq \mathbb{F}_2^{\binom{[n]}{2}}$ of all loopless graphs with even number of edges, is a $W$-code and it satisfies $\prob[\Ecal]=\frac12$. Thus, an affirmative answer to Problem \ref{p1.4} will identify the parity of the number of edges of $H$ as the only obstruction to the existence of a dense $H$-code.
\end{rem}

The next three problems are related to graph-codes with respect to cliques. More precisely,~let
\begin{equation} \label{e1.6}
\Ccal:=\big\{ K_r\colon r\meg 2\big\} \ \ \ \text{ and } \ \ \
\Ccal^\circ:= \big\{ K_r^\circ\colon r\meg 1\big\}
\end{equation}
denote the families of loopless cliques, and cliques with all possible self-loops, respectively.

\begin{problem}[Alon/Gowers] \label{p1.6}
If\, $\mathcal{G} \subseteq \mathbb{F}_2^{\binom{[n]}{2}}$ is a\, $\Ccal$-code, then\, $\prob[\Gcal]=o_{n\to\infty}(1)$.
\end{problem}

\begin{problem}[Gowers] \label{p1.7}
If\, $\mathcal{G} \subseteq \mathbb{F}_2^{\binom{[n]}{2}}$ is a\, $\Ccal$-$\mathrm{HJ}$-code, then\, $\prob[\Gcal]=o_{n\to\infty}(1)$.
\end{problem}

\begin{problem} \label{p1.8}
If\, $\mathcal{G} \subseteq \mathbb{F}_2^{\binom{[n]}{\mik 2}}$ is a\, $\Ccal^\circ$-$\mathrm{HJ}$-code, then\, $\prob[\Gcal]=o_{n\to\infty}(1)$.
\end{problem}

Problems \ref{p1.7} and \ref{p1.8} are essentially equivalent in nature, the only difference being that Problem \ref{p1.7} refers to loopless graphs while Problem \ref{p1.8} to graphs with self-loops. It is clear that an affirmative answer to Problem \ref{p1.7} yields an affirmative answer to Problem \ref{p1.6}, and it is not hard to see\footnote{Indeed, fix a $\Ccal$-$\mathrm{HJ}$-code $\Gcal\subseteq \mathbb{F}_2^{\binom{[n]}{2}}$, and for every $G\in\Gcal$ and every $\emptyset\neq X\subseteq [n]$ with even cardinality, set $G_X:=G\cup \big\{ \{i\}\colon i\in X\big\}\in \mathbb{F}_2^{\binom{[n]}{\mik 2}}$. Then, the family $\Gcal':=\{G_X\colon G\in\Gcal \text{ and } \emptyset\neq X\subseteq [n] \text{ with even cardinality}\}$ is a $\Ccal^\circ$-$\mathrm{HJ}$-code with $\prob[G']\asymp \prob[\Gcal]$.} that an affirmative answer to Problem \ref{p1.8} yields an affirmative answer to Problem \ref{p1.7}. Finally, as we shall explain in Appendix \ref{sec-appendix}, Problem \ref{p1.8} is equivalent to the first unknown case of the density polynomial Hales--Jewett conjecture. Thus, Problems~\ref{p1.6}--\ref{p1.8} form a tower of critical test cases, of increasing difficulty, towards a resolution of the density polynomial Hales--Jewett conjecture. That said, we emphasize that all these problems are quite hard, and an affirmative resolution of any of these would constitute a significant advance.

\subsection{Gowers uniformity norms, and pseudorandomness} \label{subsec1.2}

Pseudorandomness refers to the phenomenon that certain deterministic (and explicit) discrete structures behave like random ones for most practical purposes. The phenomenon was first discovered in the context of graphs by Chung--Graham--Wilson \cite{CGW88,CGW89} who build upon previous work of Thomason \cite{Tho87}. The last thirty years, pseudorandomness has become one of the central themes of extremal and probabilistic combinatorics, and it has found numerous applications in number theory and theoretical computer science; see, e.g., \cite{Ro15}.

Much of the modern theory of pseudorandomness is developed using the uniformity norms introduced by Gowers \cite{Gow01}. We recall their definition in characteristic two, and refer the reader to \cite{TV06} for a more comprehensive treatment.

\begin{defn}[Uniformity norms in characteristic two] \label{d1.9}
Let $\Ical$ be a nonempty finite set, and let $d \geqslant 2$ be an integer. The \emph{(Gowers) uniformity norm $\|\cdot\|_{U_d}$ of order $d$} is defined by setting, for any $f \colon \mathbb{F}_2^{\Ical} \to \mathbb{C}$,
\begin{equation} \label{e1.7}
\|f\|_{U_d} := \bigg| \underset{x, y_1, \ldots, y_d \in \mathbb{F}_2^{\Ical}}{\ave} \bigg[
\prod_{s \subseteq [d]} \conj^{|s|}f\Big(x + \sum_{i \in s} y_i\Big)\bigg] \bigg|^{\frac{1}{2^d}},
\end{equation}
where $\conj$ denotes complex conjugation, that is, $\conj z := \bar z$.
\end{defn}

It is well-known---see \cite[(11.7)]{TV06}---that $\langle \|\cdot\|_{U_d}\colon d \meg 2\rangle$ is an increasing family of norms. The $U_2$-norm has a spectral interpretation that makes it amenable to discrete Fourier analysis (see Fact \ref{prop:U2ByFourier}). The $U_3$-norm is somewhat more involved, but it is still rather easy to grasp, and quantitatively effective to work with, thanks to the recent resolution of Marton's conjecture; see, in particular, \cite[Corollary 1.6]{GGMT25}. However, for $d\meg 4$, the behavior of the $U_d$-norm is significantly more complicated and it is understood via the deep work of Tao--Ziegler \cite{TZ12} on the inverse theorem for the $U_d$-norms (see Theorem \ref{thm:inverse}).

The link between pseudorandomness and the Gowers uniformity norms is provided by the notion of uniformity: a function $f \colon \mathbb{F}_2^{\Ical} \to \mathbb{C}$ is said to \emph{$\varepsilon$-uniform of order $d$}, where $\varepsilon$ is a (small) positive parameter, or simply \emph{$d$-uniform}, if $\big\|f-\ave[f]\big\|_{U_d}\mik \varepsilon$; in particular, a~set $\Gcal\subseteq \mathbb{F}_2^{\Ical}$ is $d$-uniform if $\big\|\mathbbm{1}_{\Gcal}-\prob[\Gcal]\big\|_{U_d}=o(1)$. As $d$ becomes larger, $d$-uniformity becomes a stronger and substantially more informative property; we refer to \cite{HHL19} for an overview of these pseurandomness properties and their applications in theoretical computer science.

\subsection{Main results} \label{subsec1.3}

As already noted, the goal of this paper is to show the pseudorandomness of graph-codes that are extremal in the sense of the following definition.

\begin{defn}[Extremal graph-codes] \label{d1.10}
Let $\Hcal$ be a collection of nonempty graphs, and let $n\meg 2$ be an integer. We set
\begin{gather}
\label{e1.8} \delta_n(\Hcal) :=
\max\bigg\{ \prob[\Gcal]\colon \Gcal\subseteq \mathbb{F}_2^{\binom{[n]}{2}} \text{ is an $\Hcal$-code}\bigg\}, \\
\label{e1.9} \Delta_n(\Hcal) :=
\max\bigg\{ \prob[\Gcal]\colon \Gcal\subseteq \mathbb{F}_2^{\binom{[n]}{2}} \text{ is an $\Hcal$-$\mathrm{HJ}$-code}\bigg\};
\end{gather}
observe that $\delta_n(\Hcal)\mik \Delta_n(\Hcal)$. We say that an\, $\Hcal$-code $\Gcal\subseteq \mathbb{F}_2^{\binom{[n]}{2}}$ is \emph{extremal} if\, $\prob[\Gcal]=\delta_n(\Hcal)$. Respectively, we say that an\, $\Hcal$-$\mathrm{HJ}$-code $\Gcal\subseteq \mathbb{F}_2^{\binom{[n]}{2}}$ is \emph{extremal} if\, $\prob[\Gcal]=\Delta_n(\Hcal)$.

We have the following versions of these invariants for graphs with loops,
\begin{gather}
\label{e1.10} \delta^\circ_n(\Hcal) :=
\max\bigg\{ \prob[\Gcal]\colon \Gcal\subseteq \mathbb{F}_2^{\binom{[n]}{\mik 2}} \text{ is an $\Hcal$-code}\bigg\}, \\
\label{e1.11} \Delta^\circ_n(\Hcal) :=
\max\bigg\{ \prob[\Gcal]\colon \Gcal\subseteq \mathbb{F}_2^{\binom{[n]}{\mik 2}} \text{ is an $\Hcal$-$\mathrm{HJ}$-code}\bigg\};
\end{gather}
again, note that $\delta^\circ_n(\Hcal)\mik \Delta^\circ_n(\Hcal)$. We say that an $\Hcal$-code $\Gcal\subseteq \mathbb{F}_2^{\binom{[n]}{\mik 2}}$ is \emph{extremal} if\, $\prob[\Gcal]=\delta^\circ_n(\Hcal)$. Finally, we say that an $\Hcal$-$\mathrm{HJ}$-code $\Gcal\subseteq \mathbb{F}_2^{\binom{[n]}{\mik 2}}$ is \emph{extremal} if\, $\prob[\Gcal]=\Delta^\circ_n(\Hcal)$.
\end{defn}
We will need the following simple fact.
\begin{fact} \label{f1.11}
For any collection $\mathcal{H}$ of nonempty graphs, the sequences $\big(\delta_n(\Hcal)\big)$, $\big(\Delta_n(\Hcal)\big)$, $\big(\delta^\circ_n(\Hcal)\big)$ and $\big(\Delta^\circ_n(\Hcal)\big)$ are all non-increasing.
\end{fact}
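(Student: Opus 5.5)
The plan is a standard averaging (restriction) argument: from a code on $n+1$ vertices we extract a code on $n$ vertices of at least the same density by fixing the coordinates that involve the last vertex. We give the argument for $\delta_n(\Hcal)$; the other three cases are identical, modulo the remarks below.

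Fix an extremal $\Hcal$-code $\Gcal\subseteq \mathbb{F}_2^{\binom{[n+1]}{2}}$. Split the index set as
\[
\binom{[n+1]}{2} = \binom{[n]}{2} \cup J, \qquad J := \big\{\{i,n+1\}\colon i\in[n]\big\}.
\]
Accordingly, write every graph on $[n+1]$ as a pair $(G,E)$ with $G\in\mathbb{F}_2^{\binom{[n]}{2}}$ and $E\in\mathbb{F}_2^{J}$. For each $E\in\mathbb{F}_2^{J}$, consider the section
\[
\Gcal_E := \big\{G\in \mathbb{F}_2^{\binom{[n]}{2}}\colon (G,E)\in\Gcal\big\}.
\]
Since $\prob[\Gcal]=\ave_{E}\,\prob[\Gcal_E]$, there is some $E$ with $\prob[\Gcal_E]\meg \prob[\Gcal]=\delta_{n+1}(\Hcal)$.

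It remains to check that $\Gcal_E$ is an $\Hcal$-code. Take $G_1,G_2\in\Gcal_E$. Then $(G_1,E)+(G_2,E)=(G_1+G_2,\emptyset)$, and this is literally the same edge set as $G_1+G_2$ regarded as a graph on $[n+1]$. Isomorphism in the paper is defined through the spanning vertex set $V(G)$, so it does not depend on the ambient vertex set. Hence $G_1+G_2$ is isomorphic to a member of $\Hcal$ exactly when $(G_1,E)+(G_2,E)$ is, which cannot happen because $\Gcal$ is an $\Hcal$-code. Therefore
\[
\delta_n(\Hcal)\meg \prob[\Gcal_E]\meg \delta_{n+1}(\Hcal).
\]

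For the $\mathrm{HJ}$ versions, the only extra check is that containment passes to sections: $G_1\supseteq G_2$ if and only if $(G_1,E)\supseteq(G_2,E)$. For the versions with loops, we simply put the loop $\{n+1\}$ into $J$ as well. There is no real obstacle here. The one point needing care is the observation that isomorphism depends only on the spanning vertex set, which Definition \ref{d1.1} together with \eqref{e1.2} guarantees.
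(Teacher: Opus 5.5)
Your proof is correct and is essentially the paper's argument: the paper fixes all coordinates outside $\binom{[m]}{2}$ at once, going from $n$ directly to any $m<n$ rather than one vertex at a time, but it is the same averaging over sections, each of which is again a code. Your remark that isomorphism depends only on the spanning vertex set, which is why the sections inherit the code property, makes explicit a point the paper leaves implicit.
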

\begin{proof}
We will only argue for the sequence $\big(\delta_n(\Hcal)\big)$; the other cases are similar. Fix a pair $n>m\meg 2$ of integers, and let $\Gcal\subseteq \mathbb{F}_2^{\binom{[n]}{2}}$ be an extremal $\mathcal{H}$-code, that is, $\prob[\Gcal]=\delta_n(\Hcal)$. Set $\Xcal := \mathbb{F}_2^{\binom{[m]}{2}\setminus\binom{[n]}{2}}$ and, for every $x\in\mathcal{X}$, let $\Gcal_x:=\Big\{ W\in \mathbb{F}_2^{\binom{[m]}{2}}\colon (W,x)\in\Gcal\Big\}$ denote the section of $\Gcal$ at $x$. Since $\mathcal{G}$ is an $\mathcal{H}$-code, for every $x\in\mathcal{X}$, the family $\Gcal_x\subseteq \mathbb{F}_2^{\binom{[m]}{2}}$ is also an $\mathcal{H}$-code and, consequently, $\prob[\Gcal_x]\mik \delta_m(\Hcal)$. Thus,
\begin{equation} \label{e1.12}
\delta_n(\Hcal) = \prob[\Gcal] = \underset{x\in\Xcal}{\ave}\big[ \prob[\Gcal_x]\big] \leqslant \delta_m(\Hcal). \qedhere
\end{equation}
\end{proof}

We are now ready to state the first main result of this paper.

\begin{thm}[Fourier uniformity of extremal graph-codes] \label{thm:MainEvenEdges}
Let $\Hcal$ be a collection of nonempty loopless graphs, each with an even number of edges. Then, for every $\varepsilon>0$, there exists a positive integer $n_0=n_0(\varepsilon,\Hcal)$ such that, for all $n\geqslant n_0$, if\, $\mathcal{G}\subseteq \mathbb{F}_2^{\binom{[n]}{2}}$ is an extremal $\mathcal{H}$-code,~then
\begin{equation} \label{e1.13}
\big\|\mathbbm{1}_{\Gcal}-\prob[\Gcal]\big\|_{U_2} \mik \varepsilon.
\end{equation}
\end{thm}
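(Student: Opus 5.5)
The plan is a density-increment argument played off against the monotonicity in Fact \ref{f1.11}. Since $(\delta_n(\Hcal))$ is non-increasing and bounded below, it converges to some $\delta\meg 0$. So for any $\eta>0$ there is $m_0$ with $\delta_m(\Hcal)-\delta_n(\Hcal)<\eta$ for all $n\meg m\meg m_0$. Suppose, towards a contradiction, that $\Gcal$ is an extremal $\Hcal$-code on $[n]$ with $f:=\mathbbm{1}_{\Gcal}-\prob[\Gcal]$ and $\|f\|_{U_2}>\varepsilon$.

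\textbf{Step 1 (large Fourier coefficient).} By the spectral formula (Fact \ref{prop:U2ByFourier}) and Parseval, $\|f\|_{U_2}^4=\sum_S|\hat f(S)|^4\mik \max_S|\hat f(S)|^2\cdot\ave[|f|^2]\mik\max_S|\hat f(S)|^2$. Hence there is a loopless graph $S\subseteq\binom{[n]}{2}$ with $|\hat f(S)|\meg\varepsilon^2$. We have $S\neq\emptyset$, since $\hat f(\emptyset)=0$.

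\textbf{Step 2 (Ramsey reduction).} Colour each pair in $\binom{[n]}{2}$ according to whether it lies in $S$. By Ramsey's theorem (Theorem \ref{thm-Ramsey-main}) there is $A\subseteq[n]$ with $|A|=m\meg c\log n$ such that $\binom{A}{2}$ is either disjoint from $S$ or contained in $S$.

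Write $\Xcal:=\mathbb{F}_2^{\binom{[n]}{2}\setminus\binom{A}{2}}$, and for $x\in\Xcal$ let $\Gcal_x\subseteq\mathbb{F}_2^{\binom{A}{2}}$ be the section of $\Gcal$ at $x$. As in the proof of Fact \ref{f1.11}, each $\Gcal_x$ is an $\Hcal$-code on $m$ vertices, because the $\Hcal$-code property is isomorphism invariant. Write $\chi_S(W)=(-1)^{|W\cap S|}$, so that $\chi_S(W,x)=\chi_S(x)\,\chi_S(W)$.

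\textbf{Step 3a (empty case).} If $S\cap\binom{A}{2}=\emptyset$, then $\chi_S(W)=1$ and
\[\hat f(S)=\ave_{x}\big[\chi_S(x)\big(\prob[\Gcal_x]-\prob[\Gcal]\big)\big].\]
The quantities $\prob[\Gcal_x]-\prob[\Gcal]$ average to zero over $x$. Combined with $|\hat f(S)|\meg\varepsilon^2$, this forces some $x$ with $\prob[\Gcal_x]\meg\prob[\Gcal]+\varepsilon^2/2$.

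\textbf{Step 3b (complete case).} If $\binom{A}{2}\subseteq S$, then $\chi_S(W)=(-1)^{|W|}$ is the edge-parity character. Write $\Gcal_x=E_x\cup O_x$, where $E_x$ and $O_x$ are the parts with an even and an odd number of edges, respectively. The same averaging argument gives some $x$ for which one of $E_x,O_x$, call it $P$, satisfies $\prob[P]\meg\tfrac12(\prob[\Gcal]+\varepsilon^2/2)$.

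Here the even-edge hypothesis enters. Fix an edge $e_0\in\binom{A}{2}$ and set $\Gcal':=P\cup(P+\{e_0\})$. I claim $\Gcal'$ is an $\Hcal$-code.
\begin{itemize}
\item Sums of two members of $P$, or of two members of $P+\{e_0\}$, are sums of two members of $\Gcal_x$, so they are not isomorphic to a graph in $\Hcal$.
\item A cross sum has an odd number of edges, so it cannot be isomorphic to a graph in $\Hcal$, all of whose graphs have an even number of edges.
\end{itemize}
Since $P$ and $P+\{e_0\}$ have different parities, they are disjoint, so $\prob[\Gcal']=2\prob[P]\meg\prob[\Gcal]+\varepsilon^2/2$.

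\textbf{Step 4 (conclusion).} In both cases $\delta_m(\Hcal)\meg\delta_n(\Hcal)+\varepsilon^2/2$ with $m\meg c\log n$. Choose $m_0$ for $\eta=\varepsilon^2/2$, and take $n_0$ so large that $c\log n_0\meg m_0$. For $n\meg n_0$ this contradicts the choice of $m_0$.

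The main obstacle is Step 3: a dense $S$ cannot simply be avoided. The Ramsey dichotomy together with the parity-doubling trick $P\cup(P+\{e_0\})$ is what handles it, and this is precisely where the even-edge assumption is used.
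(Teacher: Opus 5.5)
Your proposal is correct and follows essentially the same route as the paper's proof (Lemma \ref{l3.4} together with the deduction of Theorem \ref{thm:MainEvenEdges}): a large Fourier coefficient at some nonempty $S$, a Ramsey/Erd\H{o}s--Szekeres set $A$ with $\binom{A}{2}$ homogeneous for $S$, a density increment on a fiber in the disjoint case, and in the complete case a parity class of a fiber doubled by adding an odd set ($\{e_0\}$ for you, $e$ in the paper), with cross sums excluded because they have an odd number of edges. The only differences are cosmetic. You argue by contradiction and average over cells, which gives an increment of $\varepsilon^2/2$. The paper first conditions on the parity class $\Ecal_{i_0}$ of $G\mapsto|G\cap G_0|$, which gives the exact increment $\|\widehat f\|_{\ell_\infty}$ and hence the clean bound $\|\widehat f\|_{\ell_\infty}\mik\delta_m(\Hcal)-\delta_n(\Hcal)$.
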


\begin{rem} \label{r1.13}
Theorem \ref{thm:MainEvenEdges} implies in particular that if $H$ is any nonempty loopless graph with even number of edges, then any extremal $H$-code is (asymptotically) Fourier uniform. On the other hand, as mentioned in Remark \ref{r1.5}, if $W$ is any loopless graph with odd number of edges, then the family $\mathcal{E}\subseteq \mathbb{F}_2^{\binom{[n]}{2}}$ of all loopless graphs with even number of edges is an extremal $W$-code with $\prob[\Ecal]=\frac12$ and it is also easy to see that $\big\|\mathbbm{1}_{\Ecal}-\frac12\big\|_{U_2} \meg \frac12$. Thus, for any nonempty loopless graph~$H$, the parity of the number of edges of $H$ characterizes the Fourier uniformity of extremal $H$-codes.
\end{rem}

The following theorem---which is the second main result of this paper---complements Theorem~\ref{thm:MainEvenEdges} and shows that extremal codes for Problem \ref{p1.8} posses quite strong pseudorandomness properties.

\begin{thm}[Higher order uniformity of extremal $\mathrm{HJ}$-codes] \label{thm:MainCliques}
Let $\Ccal^\circ$ be as in \eqref{e1.6}, namely, $\Ccal^\circ$ is the collection of all cliques with all possible self-loops. Then, for every integer $d\geqslant 2$ and every $\varepsilon>0$, there exists a positive integer $n_0=n_0(d,\varepsilon)$ such that, for all $n\geqslant n_0$, if\, $\Gcal\subseteq \mathbb{F}_2^{\binom{[n]}{\leqslant 2}}$ is an extremal\, $\Ccal^\circ$-$\mathrm{HJ}$-code,~then
\begin{equation} \label{e1.14}
\big\|\mathbbm{1}_{\Gcal}-\prob[\Gcal]\big\|_{U_d} \mik \varepsilon.
\end{equation}
\end{thm}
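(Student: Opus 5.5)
We argue by contradiction, via a density-increment argument combined with the monotonicity of Fact \ref{f1.11}. Since $\big(\Delta^\circ_n(\Ccal^\circ)\big)$ is non-increasing and bounded below, it converges to some $\Delta\meg 0$. Fix $d,\varepsilon$ and suppose that for infinitely many $n$ there is an extremal $\Ccal^\circ$-$\mathrm{HJ}$-code $\Gcal\subseteq\mathbb{F}_2^{\binom{[n]}{\mik 2}}$ with $\|\mathbbm{1}_\Gcal-\prob[\Gcal]\|_{U_d}>\varepsilon$; in particular $\prob[\Gcal]=\Delta^\circ_n\to\Delta$.

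The aim is a structured piece of $\mathbb{F}_2^{\binom{[n]}{\mik 2}}$ on which $\Gcal$ has density at least $\Delta^\circ_n+c(d,\varepsilon)$, and which can be pulled back to a $\Ccal^\circ$-$\mathrm{HJ}$-code on a smaller but still large vertex set $m=m(n)\to\infty$. This gives $\Delta^\circ_m\meg\Delta^\circ_n+c$ for arbitrarily large $m$, contradicting convergence.

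\textbf{Step 1 (inverse theorem).} Apply the Tao--Ziegler inverse theorem (Theorem \ref{thm:inverse}) to $f=\mathbbm{1}_\Gcal-\prob[\Gcal]$. This gives a nonclassical polynomial $P$ of degree $<d$ with $|\ave[f\,e(P)]|\meg c_1(d,\varepsilon)$. Using the equidistribution/regularity theory of nonclassical polynomials, refine $P$ into a high-rank polynomial factor $\mathcal{B}$ of bounded complexity. Averaging over its atoms then yields an atom $A$ of $\mathcal{B}$, of density bounded below, on which $\Gcal$ has relative density $\meg\prob[\Gcal]+c_2$.

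\textbf{Step 2 (restriction and Ramsey).} Find a substructure adapted to the $\mathrm{HJ}$-code condition on which $\mathcal{B}$ degenerates. Concretely, we seek a vertex set $S\subseteq[n]$ of size $m$ and a fixed graph $W$ on the remaining coordinates such that:
\begin{itemize}
\item[(a)] the map $\iota\colon\mathbb{F}_2^{\binom{S}{\mik2}}\to\mathbb{F}_2^{\binom{[n]}{\mik2}}$, $\iota(G)=G+W$, is injective;
\item[(b)] $\iota$ preserves the relations ``$G_1\supseteq G_2$'' and ``$G_1+G_2$ is a looped clique''.
\end{itemize}
Restricted to $S$, each polynomial of $\mathcal{B}$ becomes a polynomial of degree $<d$ in the variables indexed by $\binom{S}{\mik2}$. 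The idea is to use the symmetry of the problem under $S_n$, together with Ramsey's theorem (Theorem \ref{thm-Ramsey-main}) applied to the coefficients (this is the step that forces $m$ to be only a slowly growing function of $n$), to pass to $S$ on which these coefficients are canonical: invariant under permutations of $S$. Symmetric polynomials in the edge/loop variables are then functions of a bounded number of symmetric statistics. The key point is that the density increment survives: averaging over random $(S,W)$ preserves the correlation, since random embeddings of this type are equidistributed for high-rank factors.

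\textbf{Step 3 (closing the increment).} Show that on a typical such restriction, the atom $A$ pulls back to a set whose density increment can be converted into a genuine $\Ccal^\circ$-$\mathrm{HJ}$-code of larger density. For this we again take sections as in the proof of Fact \ref{f1.11}: a section of an $\mathrm{HJ}$-code is an $\mathrm{HJ}$-code. For a symmetric low-degree atom, an averaging or translation argument should then either absorb the atom or exhibit a section of density $\meg\Delta^\circ_n+c_2/2$.

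\textbf{Main obstacle.} The crux is Step 2/3: the inverse theorem produces arbitrary polynomial phases on $\mathbb{F}_2^{\binom{[n]}{\mik2}}$, whereas the code condition only sees $S_n$-symmetric, clique-shaped differences. Transferring the correlation to a lower-dimensional copy of the same problem requires a Ramsey-type canonization of polynomial factors together with equidistribution, and I expect this to be the heart of the proof. The self-loops are what make the $\mathrm{HJ}$ structure rich enough here (each looped clique $K_r^\circ$ is the square of a coordinate-set indicator) for this to work at every order $d$, unlike the loopless setting of Theorem \ref{thm:MainEvenEdges}.
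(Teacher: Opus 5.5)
There is a genuine gap, and it is where you place the crux (Steps 2--3). These steps are not only left as hopes; the substructures you propose cannot do the job. Restricting to a vertex set $S$ with a fixed graph $W$ outside $\binom{S}{\mik 2}$ gives the looped analogue of a central subspace. Polynomial phases need not degenerate on any such copy, even when their coefficients are perfectly symmetric. For example, $P(x)=\frac12\sum_{e\in\binom{[n]}{\mik 2}}x(e) \bmod 1$ already has canonical coefficients, yet $P(G+W)=\frac12(|G|+|W|) \bmod 1$, so each atom of $P$ occupies exactly half of every copy.

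Consequently, preserving the correlation on a typical copy just reproduces the original situation in smaller dimension. A density increment of $\Gcal$ on half of a copy does not give a code of density above $\Delta^\circ_m(\Ccal^\circ)$ on the whole copy. The translation trick of Case 2 in Lemma~\ref{l3.4} is also unavailable, because $|K_1^\circ|=1$ and $|K_2^\circ|=3$ are odd. Your appeal to equidistribution of random embeddings for high-rank factors points the wrong way: equidistribution of $P$ on the pieces is exactly the obstruction. What you need is for $P$ to be \emph{constant} on the pieces.

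The missing idea is to pass to the richer $\mathrm{HJ}$-subspaces, where each vertex $i\in[m]$ is blown up to a block $I_i\subseteq[n]$. The variable indexed by $\{i\}$ controls all loops and edges inside $I_i$, and the one indexed by $\{i,j\}$ controls all edges between $I_i$ and $I_j$. Looped cliques on $T$ are sent to looped cliques on $\bigcup_{i\in T}I_i$, so preimages of $\Ccal^\circ$-$\mathrm{HJ}$-codes are again such codes, and $\prob[\Gcal\,|\,V]\mik\Delta^\circ_{\dim V}(\Ccal^\circ)$.

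To make $P$ constant on such pieces, write $P=\alpha+2^{-d}Q$ with $Q$ a $\mathbb{Z}_{2^d}$-valued integer polynomial (Lemma~\ref{l5.4}). Ramsey then makes the coefficients of $Q$ depend only on type inside a large set $X$. Taking blocks $I_i\subseteq X$ of size $(d+1)!\,2^k$ forces every top-degree coefficient of $Q\circ e_V$ to be a sum of equal terms grouped by distributed type. Each group has cardinality divisible by $2^k$, so the coefficient vanishes and the degree drops (Lemma~\ref{lem_reduce_degree}). Iterating, and using successive disjoint blocks $X_1,\dots,X_\ell\subseteq X$ to cover all but $\eta$ of the cube, gives Theorem~\ref{thm_part_nclas_pol}: almost all of $\mathbb{F}_2^{\binom{[n]}{\mik 2}}$ is partitioned into $m$-dimensional $\mathrm{HJ}$-subspaces on which $P$ is constant.

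With this in hand, no regularity or equidistribution theory is needed. Rotate $P$ so that the correlation is a nonnegative real. The weight $1+\cos(2\pi P)\in[0,2]$ is then constant on each piece. The one-sided bound $\prob[\Gcal\,|\,V]-\prob[\Gcal]\mik\Delta^\circ_m(\Ccal^\circ)-\Delta^\circ_n(\Ccal^\circ)\mik\eta$ yields $|\ave[(\mathbbm{1}_\Gcal-\prob[\Gcal])\exp(2\pi i P)]|\mik 4\eta$, which contradicts the inverse theorem. In your contradiction framework this reads: the level set of $P$ carrying the increment is, up to measure $\eta$, a union of such subspaces. One of them therefore has density at least $\Delta^\circ_n(\Ccal^\circ)+c-O(\eta)$.
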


\begin{rem}[Quantitative estimates] \label{r1.16}
The proof of Theorem \ref{thm:MainEvenEdges} is effective, though in order to obtain explicit estimates for $n_0(\varepsilon,\Hcal)$ one needs to control the rate of convergence of the sequence $\big(\delta_n(\Hcal)\big)$ introduced in \eqref{e1.8}; see Lemma \ref{l3.4} in the main text. This can be done with a standard regularity argument---see, e.g., \cite[Proposition 1.11]{Tao08}---but it will ultimately lead to tower-type bounds. Using the resolution of Marton's conjecture \cite[Corollary 1.6]{GGMT25}, one can also effectivize the case ``$d=3$" of Theorem \ref{thm:MainCliques} but, as expected, the bounds one obtains in this case are also tower-type. Finally, the case ``$d\meg 4$" of Theorem \ref{thm:MainCliques} relies on the inverse theorem for the $U_d\text{-norm}$ due to Tao--Ziegler \cite{TZ12} which remains, at present, ineffective; of course, any progress on the quantitative aspects of the inverse theorem will also yield, fairly straightforwardly, quantitative estimates for Theorem \ref{thm:MainCliques}.
\end{rem}

\section{Subspaces} \label{sec2}

An important ingredient of the proofs of Theorems \ref{thm:MainEvenEdges} and \ref{thm:MainCliques} is the isolation of certain linear subspaces of $\Ftwo{\binom{[n]}{2}}$ and $\Ftwo{\binom{[n]}{\mik 2}}$ that have a particular combinatorial significance. Our goal in this section is to introduce these subspaces.

\subsection{Central embeddings and central subspaces for the model of loopless graphs} \label{subsec2.1}

We start with the model $\mathbb{F}_2^{\binom{[n]}{2}}$ of all loopless graphs on $[n]$. First, we need to introduce some pieces of notation.

Fix a positive integer $n$, and let $I$ be a nonempty subset of $[n]$. Set $m=|I|$, let $f$ denote the unique strictly increasing map from $[m]$ onto~$I$, and define $\mathrm{Id}_I\colon \mathbb{F}_2^{\binom{[m]}{2}} \to \mathbb{F}_2^{\binom{[n]}{2}}$ by setting for every $x\in \mathbb{F}_2^{\binom{[m]}{2}}$,
\begin{enumerate}
\item[$\bullet$] $\mathrm{Id}_I (x)\big(\{i,j\}\big):= x\big(\{f^{-1}(i),f^{-1}(j)\}\big)$ if $\{i,j\} \in \binom{I}{2}$, and
\item[$\bullet$] $\mathrm{Id}_I (x)\big(\{i,j\}\big)=0$ if $\{i,j\} \in \binom{[n]}{2} \setminus \binom{I}{2}$.
\end{enumerate}
Notice that the image of $\mathrm{Id}_I$ is a vector subspace of $\mathbb{F}_2^{\binom{[n]}{2}}$.

Next, given two positive integers $n\geqslant m$, we say that a map $e\colon \mathbb{F}_2^{\binom{[m]}{2}} \to \mathbb{F}_2^{\binom{[n]}{2}}$ is a \emph{central embedding} if it is of the form $e = \mathrm{Id}_I +c$, where $I\in \binom{[n]}{m}$
and $c \in \mathbb{F}_2^{\binom{[n]}{2}}$ with $c(p)=0$ for all $p\in \binom{I}{2}$. We define
\begin{enumerate}
\item[$\bullet$] the \emph{dimension} $\dim(e)$ of $e$ to be the positive integer $m$;
\item[$\bullet$] the \emph{support} $\mathrm{supp}(e)$ of $e$ to be the set $I$;
\item[$\bullet$] the \emph{constant part} $\mathrm{const}(e)$ of $e$ to be the element $c$.
\end{enumerate}
Note that the dimension, the support and the constant part of $e$ uniquely determine $e$.

\begin{defn}[Central subspace] \label{d2.1}
A \emph{central subspace} $V$ of\, $\mathbb{F}_2^{\binom{[n]}{2}}$ is defined to be the image of a central embedding; note that this central embedding is unique and shall be denoted by $e_V$. We also define the \emph{dimension} $\mathrm{dim}(V)$, the \emph{support} $\mathrm{supp}(V)$ and the \emph{constant part} $\mathrm{const}(V)$ of a central subspace $V$ to be the dimension, the support and the constant part of $e_V$, respectively.
\end{defn}

\subsection{$\mathrm{HJ}$-embeddings and $\mathrm{HJ}$-subspaces for the model of graphs with self-loops} \label{subsec2.2}

We proceed to define the appropriate notion of a subspace for the model $\mathbb{F}_2^{\binom{[n]}{\mik 2}}$ of graphs on $[n]$ with self-loops. This class of subspaces is, actually, quite richer than the class of central subspaces defined above.

Let $n\geqslant m$ be positive integers, and let $\boldsymbol{I} = (I_i)_{i=1}^m$ be a finite sequence of pairwise disjoint nonempty subsets of $[n]$ with $\min(I_1)< \dots < \min(I_m)$. For every $q\in \binom{[m]}{\leqslant 2}$, let
$b_q\in \mathbb{F}_2^{\binom{[n]}{\leqslant 2}}$ denote the indicator of the set
\begin{equation} \label{e2.1}
\bigg\{ p\in \binom{[n]}{\leqslant 2} \colon p\subseteq \bigcup_{i\in q}I_i \text{ and } p\cap I_i\neq\emptyset \text{ for all } i\in q\bigg\};
\end{equation}
we define $\mathrm{Id}_{\boldsymbol{I}}\colon \mathbb{F}_2^{\binom{[m]}{\leqslant 2}} \to \mathbb{F}_2^{\binom{[n]}{\leqslant 2}}$ by
\begin{equation} \label{e2.2}
\mathrm{Id}_{\boldsymbol{I}}(x) := \sum_{q\in \binom{[m]}{\leqslant 2}} x(q)\, b_q.
\end{equation}
We say that a map $e\colon \mathbb{F}_2^{\binom{[m]}{\leqslant 2}} \to \mathbb{F}_2^{\binom{[n]}{\leqslant 2}}$ is a \emph{HJ-embedding}\footnote{As in Definition \ref{d1.1}, the acronym ``HJ" comes from ``Hales--Jewett".} if it is of the form $e = \mathrm{Id}_{\boldsymbol{I}}+c$, where $\boldsymbol{I}=(I_i)_{i=1}^m$ is a finite sequence of pairwise disjoint nonempty subsets of $[n]$ that satisfy $\min(I_1)< \dots < \min(I_m)$ and $c\in \mathbb{F}_2^{\binom{[n]}{\leqslant 2}}$ is such that $c(p) = 0$ for all $p\in \binom{I_1\cup\dots\cup I_m}{\leqslant 2}$. We define
\begin{enumerate}
\item[$\bullet$] the \emph{dimension} $\dim(e)$ of $e$ to be the positive integer $m$;
\item[$\bullet$] the \emph{wildcard sets} $\mathrm{wild}(e)$ of $e$ to be the finite sequence $\boldsymbol{I}$;
\item[$\bullet$] the \emph{constant part} $\mathrm{const}(e)$ of $e$ to be the element $c$.
\item[$\bullet$] the \emph{variable sets} $\mathrm{var}(e) := \big\langle \mathrm{var}(e)_q\colon q\in \binom{[\dim(e)]}{\mik 2}\big\rangle$ by setting, for every $q\in \binom{[\dim(e)]}{\mik 2}$,
\begin{equation} \label{e2.3}
\mathrm{var}(e)_q := \bigg\{ p\in \binom{[n]}{\leqslant 2}\colon p\subseteq \bigcup_{i\in q}I_i \text{ and } p\cap I_i\neq\emptyset \text{ for all } i\in q\bigg\};
\end{equation}
\end{enumerate}
Again, observe that the dimension, the wildcard sets, and the constant part of $e$ uniquely determine $e$.

\begin{defn}[$\mathrm{HJ}$-subspace] \label{d2.2}
A \emph{$\mathrm{HJ}$-subspace} $V$ of $\mathbb{F}_2^{\binom{[n]}{\leqslant 2}}$ is defined to be the image of a $\mathrm{HJ}\text{-embedding}$; as in the case of central subspaces, we note that this $\mathrm{HJ}$-embedding is unique and shall be denoted by~$e_V$. We define the \emph{dimension} $\mathrm{dim}(V)$, the \emph{wildcard sets} $\mathrm{wild}(V)$, the \emph{variable sets} $\mathrm{var}(V)$ and the \emph{constant part} $\mathrm{const}(V)$ of $V$ to be the dimension, the wildcard sets, the variable sets and the constant part of $e_V$, respectively.
\end{defn}

We will also encounter HJ-subspaces that have the following special form.

\begin{defn}[Block $\mathrm{HJ}$-subspace] \label{d2.3}
We say that a $\mathrm{HJ}$-subspace $V$ of $\mathbb{F}_2^{\binom{[n]}{\leqslant 2}}$ is \emph{block} if its wildcard sets $\mathrm{wild}(V)=(I_i)_{i=1}^{\dim(V)}$ are successive, that is, if $\max(I_i)<\min(I_{i+1})$ for every positive integer $i\leqslant \dim(V)-1$.
\end{defn}

\section{Fourier uniformity of extremal graph-codes: proof of Theorem \ref*{thm:MainEvenEdges}} \label{sec3}

\subsection{Discrete Fourier analysis} \label{subsec3.1}

We begin by reviewing some basic facts from discrete Fourier analysis that are needed for the proof of Theorem \ref{thm:MainEvenEdges}; see \cite{HHL19,O'Don14,TV06} for a detailed treatment. In what follows, let $\Ical$ denote a nonempty finite set.

\subsubsection{Norms} \label{subsubsec3.1.1}

Given $1 \leqslant p < \infty$, we associate to every function $f\colon \mathbb{F}_2^{\Ical} \to \mathbb{C}$ the norms
\begin{equation} \label{e3.1}
\|f\|_{L_p}:= \Big(\mathbb{E}\big[|f|^p\big]\Big)^{\frac{1}{p}} \ \ \ \text{ and } \ \ \
\|f\|_{\ell_p}:= \bigg( \sum_{\xi \in \mathbb{F}_2^\Ical} |f(\xi)|^p\bigg)^{\frac{1}{p}}.
\end{equation}
We also define the $L_\infty$-norm and $\ell_\infty$-norm by $\|f\|_{\ell_\infty} = \|f\|_{L_\infty} := \max\big\{ |f(x)|\colon x\in\mathbb{F}_2^{\Ical}\big\}$.

The scalar products associated with the $L_2$-norm and the $\ell_2$-norm will be denoted by $\langle\cdot, \cdot\rangle_{L_2}$ and $\langle\cdot, \cdot\rangle_{\ell_2}$, respectively. We will often denote the vector space of all complex-valued functions on $\Ftwo{\Ical}$ by $L_p(\Ftwo{\Ical})$ or $\ell_p(\Ftwo{\Ical})$ to signal the corresponding norm.

\subsubsection{Characters} \label{subsubsec3.1.2}

Recall that linear functionals $\chi\colon\Ftwo{\Ical} \to \Ftwo{}$ are exactly the maps of the form $x \mapsto |x \cap \xi| \mod 2$, where $\xi \in \Ftwo{\Ical}$. Thus, the characters of the abelian group $\Ftwo{\Ical}$---that is, group homomorphisms $\Ftwo{\Ical} \to \mathbb{C}^\times$---are exactly the \textit{Walsh functions} $\langle w_\xi\colon \xi \in \Ftwo{\Ical}\rangle$ defined by setting, for every $\xi \in \Ftwo{\Ical}$ and every $x \in \Ftwo{\Ical}$,
\begin{equation} \label{e3.2}
w_\xi(x) := (-1)^{|x \cap \xi|};
\end{equation}
the function $w_\xi$ is often referred to as \textit{Walsh function at $\xi$}. Notice that
\begin{enumerate}
\item[(P1)] $w_0 = \mathbbm{1}$,
\item[(P2)] \label{it:ExpectWalsh} $\mathbb{E}[w_\xi]= 0$ for all $\xi \neq 0$, and
\item[(P3)] \label{it:ProductWalsh} $w_\xi(x) w_\zeta(x) = w_{\xi+\zeta}(x)$ for all $\xi,\zeta,x\in\Ftwo{\Ical}$.
\end{enumerate}
In particular, the family of Walsh functions is an orthonormal basis of $L_2(\Ftwo{\Ical})$.

\begin{defn}[Fourier transform] \label{d3.1}
For any function $f \colon \Ftwo{\Ical}\to \mathbb{C}$, its \textit{Fourier transform} $\widehat f \colon \Ftwo{\Ical} \to \mathbb{C}$ is defined by setting $\widehat f(\xi) := \langle f, w_\xi\rangle_{L_2}$.
\end{defn}

Observe that $\widehat{\mathbbm{1}_{\Ftwo{\Ical}}}$ is the indicator of $\{0\}$, and $\widehat f(0) = \mathbb{E}[f]$ for any $f \colon \Ftwo{\Ical} \to \mathbb{C}$. Moreover, since the Walsh system is an orthonormal basis of $L_2(\Ftwo{\Ical})$, we have, for any
$f \colon \Ftwo{\Ical} \to \mathbb{C}$,
\begin{enumerate}
\item[(P4)] \label{Inversion} (Fourier inversion formula) $f(x) = \sum_{\xi \in \mathbb{F}_2^\Ical} \widehat f(\xi) w_\xi(x)$ for all $x \in \mathbb{F}_2^\Ical$, and
\item[(P5)] \label{Parseval} (Parseval's identity) $\|\widehat f\|_{\ell_2} = \|f\|_{L_2}$.
\end{enumerate}
We also note that we shall consider complex function on $\Ftwo{\Ical}$ that have a physical or combinatorial significance---such as, indicators of subsets of $\mathbb{F}_2^{\Ical}$---as elements of the $L_p$-spaces, while their Fourier transforms are viewed as elements of the $\ell_p$-spaces; this is done in order to avoid normalization constants.

\subsubsection{Spectral interpretation of the $U_2$-norm} \label{subsubsec3.1.3}

The following basic fact gives a spectral interpretation of the second Gowers uniformity norm; see, e.g., \cite[(11.3)]{TV06}.

\begin{fact} \label{prop:U2ByFourier}
For any $f \colon \Ftwo{\Ical} \to \mathbb{C}$, we have $\|f\|_{U_2} = \|\widehat f\|_{\ell_4}$.
\end{fact}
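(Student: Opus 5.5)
This is the standard identity $\|f\|_{U_2}=\|\widehat f\|_{\ell_4}$, and I would verify it by direct expansion. The approach is to expand $\|f\|_{U_2}^4$ from Definition \ref{d1.9} with $d=2$, substitute the Fourier inversion formula (P4) for each of the four factors, and use the orthogonality relations (P2)--(P3) to collapse the resulting sum.

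\emph{Setting up the expansion.} With $d=2$ the definition reads
\[
\|f\|_{U_2}^4=\bigg|\underset{x,y_1,y_2}{\ave}\Big[f(x)\,\overline{f(x+y_1)}\,\overline{f(x+y_2)}\,f(x+y_1+y_2)\Big]\bigg|.
\]
I would write each factor through (P4): $f(z)=\sum_{\xi}\widehat f(\xi)w_\xi(z)$, and $\overline{f(z)}=\sum_\xi\overline{\widehat f(\xi)}\,w_\xi(z)$, since the Walsh functions are real-valued. Introduce four frequencies $\xi_1,\xi_2,\xi_3,\xi_4$, one per factor in the order displayed. By (P3) and the additivity $w_\xi(a+b)=w_\xi(a)w_\xi(b)$, the product of the four Walsh factors factorizes as
\[
w_{\xi_1+\xi_2+\xi_3+\xi_4}(x)\; w_{\xi_2+\xi_4}(y_1)\; w_{\xi_3+\xi_4}(y_2).
\]

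\emph{Collapsing the sum.} Averaging independently over $x$, $y_1$ and $y_2$ and applying (P1)--(P2), a term survives only if $\xi_1+\xi_2+\xi_3+\xi_4=0$, $\xi_2=\xi_4$ and $\xi_3=\xi_4$. In characteristic two these conditions force all four frequencies to equal a common $\xi$. The surviving terms therefore sum to $\sum_\xi \widehat f(\xi)\overline{\widehat f(\xi)}\,\overline{\widehat f(\xi)}\,\widehat f(\xi)=\sum_\xi|\widehat f(\xi)|^4$. This quantity is nonnegative, so the absolute value in the definition does nothing, and taking fourth roots gives $\|f\|_{U_2}=\|\widehat f\|_{\ell_4}$.

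There is no real obstacle here. The only step needing care is the bookkeeping of conjugations and frequencies, namely checking that each conjugated factor pairs with a conjugated Fourier coefficient. That pairing is exactly what makes the final product $|\widehat f(\xi)|^4$ rather than some non-positive expression.
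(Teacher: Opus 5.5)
Your proposal is correct. The expansion is right, the character factorization $w_{\xi_1+\xi_2+\xi_3+\xi_4}(x)\,w_{\xi_2+\xi_4}(y_1)\,w_{\xi_3+\xi_4}(y_2)$ and the collapse to $\xi_1=\xi_2=\xi_3=\xi_4$ are right, and the conjugation bookkeeping gives $\sum_\xi|\widehat f(\xi)|^4\geq 0$. The paper gives no proof of its own and only cites \cite[(11.3)]{TV06}; your direct Fourier expansion is the standard argument behind that reference.
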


We will need the following consequence of Fact \ref{prop:U2ByFourier}.

\begin{cor} \label{cor:U2FromEllInf}
For any $f \colon \Ftwo{\Ical} \to \mathbb{C}$ with $\|f\|_{L_2} \leqslant 1$, we have
\begin{equation} \label{e3.3}
\|\widehat f\|_{\ell_\infty} \leqslant \|f\|_{U_2} \leqslant \sqrt{\|\widehat f\|_{\ell_\infty}}.
\end{equation}
\end{cor}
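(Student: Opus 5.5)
The plan is to combine Fact \ref{prop:U2ByFourier}, which says $\|f\|_{U_2}^4=\|\widehat f\|_{\ell_4}^4=\sum_\xi|\widehat f(\xi)|^4$, with Parseval's identity (P5), which gives $\|\widehat f\|_{\ell_2}=\|f\|_{L_2}\mik 1$. Both inequalities in \eqref{e3.3} then come from comparing the $\ell_\infty$, $\ell_4$ and $\ell_2$ norms of $\widehat f$ on the finite set $\Ftwo{\Ical}$.

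For the lower bound, pick $\xi_0$ with $|\widehat f(\xi_0)|=\|\widehat f\|_{\ell_\infty}$. Dropping all other terms of the nonnegative sum gives
\[ \|f\|_{U_2}^4=\sum_\xi|\widehat f(\xi)|^4\meg |\widehat f(\xi_0)|^4=\|\widehat f\|_{\ell_\infty}^4. \]
Taking fourth roots yields $\|\widehat f\|_{\ell_\infty}\mik\|f\|_{U_2}$. This half does not use the hypothesis $\|f\|_{L_2}\mik 1$.

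For the upper bound, split $|\widehat f(\xi)|^4=|\widehat f(\xi)|^2\cdot|\widehat f(\xi)|^2$ and bound one factor by its maximum:
\[ \|f\|_{U_2}^4=\sum_\xi|\widehat f(\xi)|^4\mik \|\widehat f\|_{\ell_\infty}^2\sum_\xi|\widehat f(\xi)|^2=\|\widehat f\|_{\ell_\infty}^2\,\|f\|_{L_2}^2\mik\|\widehat f\|_{\ell_\infty}^2. \]
Here the middle equality is (P5) and the last step uses $\|f\|_{L_2}\mik 1$. Taking fourth roots gives $\|f\|_{U_2}\mik\|\widehat f\|_{\ell_\infty}^{1/2}$.

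There is no real obstacle. The only point needing care is normalization: $\widehat f$ must be measured in the unnormalized $\ell_p$ norms, consistent with the convention adopted in the paper, so that Fact \ref{prop:U2ByFourier} and Parseval apply exactly as stated.
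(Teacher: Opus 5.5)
Your proof is correct and follows the paper's argument. The upper bound via $\sum_\xi|\widehat f(\xi)|^4\mik\|\widehat f\|_{\ell_\infty}^2\|\widehat f\|_{\ell_2}^2$ and Parseval is exactly the paper's computation, and your explicit lower bound simply supplies the step the paper leaves implicit, namely that $\|\widehat f\|_{\ell_\infty}\mik\|\widehat f\|_{\ell_4}$.
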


\begin{proof}
By Fact \ref{prop:U2ByFourier}, it is enough to prove that $\|\widehat f\|_{\ell_4}^2 \leqslant \|\widehat f\|_{\ell_\infty}$. Indeed, by our assumptions,
\begin{equation} \label{e3.4}
\|\widehat f\|_{\ell_4}^4 = \sum_{\xi \in \Ftwo{\Ical}} |\widehat f(\xi)|^4 \leqslant \|\widehat f\|_{\ell_\infty}^2 \sum_{\xi \in \Ftwo{\Ical}} |\widehat f(\xi)|^2 = \|\widehat f\|_{\ell_\infty}^2\|\widehat f\|_{\ell_2}^2
\stackrel{(\hyperref[Parseval]{\mathrm{P5}})}{=}
\|\widehat f\|_{\ell_\infty}^2\|f\|_{L_2}^2 \mik \|\widehat f\|_{\ell_\infty}^2. \qedhere
\end{equation}
\end{proof}

\subsection{Proof of Theorem \ref{thm:MainEvenEdges}}

The following lemma is the main step of the proof.

\begin{lem} \label{l3.4}
Let $\Hcal$ be a collection of nonempty loopless graphs, each with an even number of edges. Also let $n,m\geqslant 2$ be integers with $n \geqslant 4^m$, and let\, $\mathcal{G} \subseteq \mathbb{F}_2^{\binom{[n]}{2}}$ be an extremal $\mathcal{H}$-code, that is, $\prob[\mathcal{G}]= \delta_n(\Hcal)$, where $\delta_n(\Hcal)$ is as in \eqref{e1.8}. Then,
\begin{equation} \label{e3.5}
\big\| \widehat{(\mathbbm{1}_{\Gcal}-\prob[\Gcal])}\big\|_{\ell_\infty} \leqslant \delta_m(\Hcal)-\delta_n(\Hcal).
\end{equation}
\end{lem}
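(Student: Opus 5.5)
The plan is to bound each Fourier coefficient $\widehat{\mathbbm{1}_\Gcal}(\xi)$ with $\xi\neq 0$ by splitting $\mathbb{F}_2^{\binom{[n]}{2}}$ into cosets of a central subspace on which $w_\xi$ has a very simple form. Since $\widehat{(\mathbbm{1}_\Gcal-\prob[\Gcal])}(0)=0$ and $\widehat{(\mathbbm{1}_\Gcal-\prob[\Gcal])}(\xi)=\widehat{\mathbbm{1}_\Gcal}(\xi)$ for $\xi\neq0$, it suffices to fix $\xi\neq 0$. Set $A:=\{x\colon |x\cap\xi| \text{ even}\}$ and $B$ its complement. Both have density $\frac12$, and
\[
\widehat{\mathbbm{1}_\Gcal}(\xi)=\prob[\Gcal\cap A]-\prob[\Gcal\cap B]=2\prob[\Gcal\cap A]-\delta_n(\Hcal)=\delta_n(\Hcal)-2\prob[\Gcal\cap B].
\]
So it is enough to show $\prob[\Gcal\cap A]\mik\frac12\delta_m(\Hcal)$ and $\prob[\Gcal\cap B]\mik\frac12\delta_m(\Hcal)$.

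\emph{Step 1 (Ramsey).} View $\xi$ as a $2$-coloring of the edges of the complete graph on $[n]$. Since $n\meg 4^m\meg \mathrm{R}(m)$ (Erd\H{o}s--Szekeres), there is $I\in\binom{[n]}{m}$ such that $\xi$ is constant on $\binom{I}{2}$. Let $W$ be the image of $\mathrm{Id}_I$. Then for every $x$ and $a\in\mathbb{F}_2^{\binom{[m]}{2}}$,
\[
|(x+\mathrm{Id}_I(a))\cap\xi|\equiv |x\cap \xi|+\epsilon\,|a| \pmod 2,
\]
where $\epsilon\in\{0,1\}$ is the constant value of $\xi$ on $\binom{I}{2}$. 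Every coset $x+W$ is a central subspace of dimension $m$, namely $e(a)=\mathrm{Id}_I(a)+c$ with $c$ equal to $x$ off $\binom I2$.

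\emph{Step 2 (restriction to cosets).} For each coset, let $\Ccal_x:=\{a\colon e(a)\in\Gcal\}\subseteq\mathbb{F}_2^{\binom{[m]}{2}}$. Since $e(a)+e(b)=\mathrm{Id}_I(a+b)$ is isomorphic to $a+b$, the family $\Ccal_x$ is an $\Hcal$-code. On this coset, $A$ corresponds either to all $a$ (if $\epsilon=0$ and $x\in A$), to nothing, or to the even-size $a$, or to the odd-size $a$; similarly for $B$. It therefore suffices to prove the following claim: for any $\Hcal$-code $\Ccal\subseteq\mathbb{F}_2^{\binom{[m]}{2}}$ and any parity class $P$ (all even-size or all odd-size graphs), we have $\prob[\Ccal\cap P]\mik\frac12\delta_m(\Hcal)$.

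The case where $A$ meets a coset in all of it requires the stronger bound $\prob[\Ccal]\mik\delta_m$ over the whole coset. Since $A$ then contains the whole coset and $B$ misses it, averaging over the cosets inside $A$ (which make up half the space) still gives $\prob[\Gcal\cap A]\mik \frac12\delta_m$.

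\emph{Step 3 (doubling via parity; the key point where the evenness of edges in $\Hcal$ is used).} Given $\Ccal\cap P$, fix a single edge $f$ and consider $\Ccal':=(\Ccal\cap P)\cup\big((\Ccal\cap P)+f\big)$; these two pieces are disjoint because they have different parities. Take two elements of $\Ccal'$. If they lie in the same piece, their sum is a sum of two elements of $\Ccal$, hence not isomorphic to a member of $\Hcal$. If they lie in different pieces, their sum is $D+f$ where $D$ has an even number of edges, so $D+f$ has an odd number of edges and cannot be isomorphic to any member of $\Hcal$. Thus $\Ccal'$ is an $\Hcal$-code, so $2\prob[\Ccal\cap P]=\prob[\Ccal']\mik\delta_m(\Hcal)$.

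Averaging this bound over cosets yields $\prob[\Gcal\cap A],\prob[\Gcal\cap B]\mik\frac12\delta_m(\Hcal)$, and hence \eqref{e3.5}. I expect the main obstacle to be the coset bookkeeping in Step 2, namely checking that every configuration of $\epsilon$ and coset parity reduces to the claim. The essential idea is the doubling trick of Step 3.
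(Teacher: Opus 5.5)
Your proof is correct and follows the paper's argument essentially step for step. It uses Erd\H{o}s--Szekeres to find an $m$-set $I$ on which $\xi$ is monochromatic, partitions the space into $m$-dimensional central cosets, applies the trivial bound when $\binom{I}{2}\cap\xi=\emptyset$, and uses parity-doubling by an odd set of edges inside $\binom{I}{2}$ when $\binom{I}{2}\subseteq\xi$. The differences are cosmetic: you bound both parity classes for every $\xi$ and average over all cosets, whereas the paper fixes a maximizing character and the heavier parity class and then pigeonholes a single good coset.
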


Granting Lemma \ref{l3.4}, let us complete the proof of Theorem \ref{thm:MainEvenEdges}.

\begin{proof}[Proof of Theorem \ref{thm:MainEvenEdges} assuming Lemma \ref{l3.4}]
Fix $\varepsilon >0$. By Fact \ref{f1.11}, there exists an integer $n_1=n_1(\varepsilon,\Hcal)$ such that $\delta_k(\Hcal)-\delta_\ell(\Hcal) \mik\varepsilon^2$ for every pair of integers $\ell \meg k\meg n_1$. Set $n_0:=4^{n_1}$; let $n\meg n_0$, and let $\Gcal\subseteq \Ftwo{\binom{[n]}{2}}$ be an extremal $\Hcal$-code. Then, by Lemma \ref{l3.4} applied for ``$m=n_1$", we obtain~that
\begin{equation} \label{e3.6}
\big\| \mathbbm{1}_{\Gcal}-\prob[\Gcal]\big\|_{U_2} \stackrel{\eqref{e3.3}}{\mik} \big\| \widehat{(\mathbbm{1}_{\Gcal}-\prob[\Gcal])}\big\|_{\ell_\infty}^{1/2} \stackrel{\eqref{e3.5}}{\mik} \sqrt{\delta_{n_1}(\Hcal)-\delta_n(\Hcal)}\mik \varepsilon . \qedhere
\end{equation}
\end{proof}

We proceed to the proof of Lemma \ref{l3.4}.

\begin{proof}[Proof of Lemma \ref{l3.4}]
For notational convenience, set $f:=\mathbbm{1}_{\Gcal}-\prob[\Gcal]$; notice that $\widehat{\mathbbm{1}_{\Gcal}}(0) = \prob[\Gcal]$, $\widehat{f}(0) = 0$, and $\widehat{f}(G)= \widehat{\mathbbm{1}_{\Gcal}}(G)$ for all $G \in \mathbb{F}_2^{\binom{[n]}{2}} \setminus \{0\}$. Let $G_0\in\mathbb{F}_2^{\binom{[n]}{2}}$ such that $|\widehat{f}(G_0)| = \|\widehat{f}\|_{\ell_\infty}$ and define, for every $i\in\{0,1\}$,
\begin{equation} \label{e3.7}
\Ecal_i := \bigg\{ G\in\mathbb{F}_2^{\binom{[n]}{2}} \colon |G_0\cap G| = i \mod 2\bigg\}.
\end{equation}
Since $G_0$ is nonempty, $\prob[\mathcal{E}_0] = \prob[\mathcal{E}_1] = \frac{1}{2}$. It follows that
\begin{equation} \label{e3.8}
\prob[\Gcal] = \frac{1}{2} \prob\big[ \mathcal{G}\, \big|\, \mathcal{E}_0\big] + \frac{1}{2} \prob\big[ \mathcal{G}\, \big|\,\mathcal{E}_1\big] \ \ \ \text{ and } \ \ \
\widehat{f}(G_0) = \widehat{\mathbbm{1}_{\Gcal}}(G_0) = \frac{1}{2} \prob\big[\mathcal{G}\, \big|\, \mathcal{E}_0\big] - \frac{1}{2}
\prob\big[ \mathcal{G}\, \big|\, \mathcal{E}_1\big].
\end{equation}
Therefore, there exists $i_0\in\{0,1\}$ such that
\begin{equation} \label{eq2.01}
\prob\big[\mathcal{G}\, \big|\, \mathcal{E}_{i_0}\big] =
\prob[\mathcal{G}] + \|\widehat{f}\|_{\ell_\infty} = \delta_n(\Hcal) + \|\widehat{f}\|_{\ell_\infty}.
\end{equation}
By the classical Erd\H{o}s--Szekeres bound \cite{ES35} and our assumption that $4^m\leqslant n$, we may select a subset $A$ of $[n]$ of cardinality $m$ such that either $\binom{A}{2}\cap G_0 = \emptyset$ or $\binom{A}{2}\subseteq G_0$. We consider cases.

\subsection*{Case 1: $\binom{A}{2}\cap G_0 = \emptyset$} Set
\begin{equation} \label{e3.10}
\mathcal{X} := \bigg\{ x\in \mathbb{F}_2^{\binom{[n]}{2} \setminus \binom{A}{2}} \colon |x\cap G_0| = i_0 \mod 2\bigg\},
\end{equation}
and for every $x\in\mathcal{X}$, define
\begin{equation} \label{e3.11}
V_x := \bigg\{x\cup y \colon y\subseteq \binom{A}{2} \bigg\}.
\end{equation}
Notice that the family $\langle V_x\colon x\in\mathcal{X}\rangle$ is a partition of $\mathcal{E}_{i_0}$ into $m$-dimensional central subspaces. Thus,
\begin{equation} \label{eq2.02}
\prob\big[ \mathcal{G}\, \big|\, \mathcal{E}_{i_0}\big] = \underset{x\in\Xcal}{\ave}
\Big[ \prob\big[ \mathcal{G}\, \big|\, V_x\big]\Big].
\end{equation}
Finally, let $x\in\Xcal$ be arbitrary and, as in Definition \ref{d2.1}, let $e_{V_x}\colon \Ftwo{\binom{[m]}{2}}\to V_x$ denote the central embedding associated with $V_x$; then, observe that the family $e_{V_x}^{-1}\big(\Gcal\big)\subseteq \Ftwo{\binom{[m]}{2}}$ is an $\Hcal$-code which in turn implies---since $e_{V_x}$ is a bijection---that $\prob\big[\Gcal\,\big|\, V_x]\mik \delta_m(\Hcal)$. Hence, by \eqref{eq2.01}, \eqref{eq2.02} and the previous observations, there exists $x_0\in \mathcal{X}$ such that
\begin{equation} \label{e3.13}
\delta_m(\Hcal) \geqslant \prob\big[ \Gcal\,\big|\, V_{x_0}\big] \geqslant \prob\big[ \Gcal\,\big|\, \mathcal{E}_{i_0}\big]
= \delta_n(\Hcal)+ \|\widehat{f}\|_{\ell_\infty},
\end{equation}
as desired.

\subsection*{Case 2: $\binom{A}{2}\subseteq G_0$} The argument is similar, but additionally uses the parity assumption on~$\Hcal$. Set $\mathcal{X}:= \mathbb{F}_2^{\binom{[n]}{2} \setminus \binom{A}{2}}$ and, for every $x\in\mathcal{X}$, let $V_x$ be the $m$-dimensional central subspace of $\Ftwo{\binom{[n]}{2}}$ defined in \eqref{e3.11}. Since the family $\langle V_x\colon x\in\mathcal{X}\rangle$ is a partition of $\mathbb{F}_2^{\binom{[n]}{2}}$ into sets of equal size, we can find $x_0 \in \mathcal{X}$ such that
\begin{equation} \label{e3.14}
\prob\big[ \Gcal \cap \mathcal{E}_{i_0}\,\big|\, V_{x_0}\big] \geqslant \prob[ \mathcal{G} \cap \mathcal{E}_{i_0}\big] = \frac{1}{2} \prob\big[ \Gcal\,\big|\, \mathcal{E}_{i_0}\big]
\stackrel{\eqref{eq2.01}}{=} \frac{1}{2} \big(\delta_n(\Hcal)+ \|\widehat{f}\|_{\ell_\infty}\big).
\end{equation}
Select a subset $e$ of $\binom{A}{2}$ of odd cardinality, and define
\begin{equation} \label{e3.15}
\mathcal{G}_0 := \Gcal \cap \mathcal{E}_{i_0} \cap V_{x_0}  \ \ \ \text{ and } \ \ \ \mathcal{G}_{1} := \big\{z+e \colon z\in \mathcal{G}_{0}\big\}.
\end{equation}
Finally, set $\mathcal{G}' := \mathcal{G}_0 \cup \mathcal{G}_1$. Notice that $\mathcal{G}' \subseteq V_{x_0}$ and
\begin{equation} \label{e3.16}
\prob\big[ \Gcal_1\,\big|\, V_{x_0}\big] = \prob\big[ \Gcal_0\,\big|\, V_{x_0}\big] \stackrel{\eqref{e3.14}}{\geqslant} \frac{1}{2} \big(\delta_n(\Hcal)+ \|\widehat{f}\|_{\ell_\infty}\big).
\end{equation}
Observe that, since $\binom{A}{2} \subseteq G_0$, we have $G \setminus G_0 = x_0 \setminus G_0$ for all $G \in \Gcal_0$. In particular, for such a $G$, we have
\begin{equation} \label{e3.17}
|G| = |G \cap G_0| + |x_0 \setminus G_0| = i_0 + |x_0 \setminus G_0| \mod 2.
\end{equation}
Letting $j_0 = i_0 + |x_0 \setminus G_0| \mod 2$, it follows that, for $i = 0, \, 1$ and $G \in \Gcal_i$,
\begin{equation} \label{e3.18}
|G| = j_0 + i \mod 2.
\end{equation}
In particular, $\Gcal_0 \cap \Gcal_1 = \varnothing$.

\begin{claim} \label{c3.5}
The family $\Gcal'$ is an $\Hcal$-code.
\end{claim}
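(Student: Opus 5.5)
We have to show that for all $G,G'\in\Gcal'$ the graph $G+G'$ is not isomorphic to any member of $\Hcal$. The plan is to split into three cases according to which of $\Gcal_0,\Gcal_1$ the two graphs come from. In two of the cases we reduce to the fact that $\Gcal$ is an $\Hcal$-code. In the remaining case we use the parity information \eqref{e3.18} together with the assumption that every graph in $\Hcal$ has an even number of edges.

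\emph{Case (a): both graphs lie in $\Gcal_0$.} Here $G,G'\in\Gcal$, so $G+G'$ is not isomorphic to a graph in $\Hcal$, since $\Gcal$ is an $\Hcal$-code.

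\emph{Case (b): both graphs lie in $\Gcal_1$.} Write $G=z+e$ and $G'=z'+e$ with $z,z'\in\Gcal_0$. Addition takes place in $\Ftwo{\binom{[n]}{2}}$, so $G+G'=z+z'$, and we are back in case (a).

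\emph{Case (c): one graph in each family.} Take $G\in\Gcal_0$ and $G'\in\Gcal_1$. By \eqref{e3.18}, $|G|\equiv j_0$ and $|G'|\equiv j_0+1 \pmod 2$. Moreover $|G+G'|=|G|+|G'|-2|G\cap G'|$, so $|G+G'|$ is odd. An isomorphism preserves the number of edges, and every member of $\Hcal$ has an even number of edges. Hence $G+G'$ is not isomorphic to any graph in $\Hcal$.

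This last case is the only step that uses the parity hypothesis, and it is where the construction was designed to work. One point needs checking: $G+G'$ must be a legitimate (loopless) graph. This holds because everything lives in $\Ftwo{\binom{[n]}{2}}$.

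Nothing here looks like a real obstacle. The only subtlety is verifying that $e\subseteq\binom{A}{2}$ keeps $\Gcal_1$ inside $V_{x_0}$; this does not affect the code property anyway.
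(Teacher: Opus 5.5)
Your proof is correct and follows essentially the same route as the paper: $\Gcal_0\subseteq\Gcal$ and $\Gcal_1$ is a translate of $\Gcal_0$, so pairs within the same family are handled by $\Gcal$ being an $\Hcal$-code. Mixed pairs have a symmetric difference with an odd number of edges by \eqref{e3.18}, so they cannot be isomorphic to a member of $\Hcal$. The only difference is that you spell out the translation step $(z+e)+(z'+e)=z+z'$, which the paper leaves as a one-word ``notice''.
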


\begin{proof}[Proof of Claim \ref{c3.5}]
Notice, first, that $\mathcal{G}_0$ and $\mathcal{G}_1$ are both $\mathcal{H}$-codes. Next, let $x\in\mathcal{G}_0$ and $y\in\mathcal{G}_1$; since $|x|$ and $|y|$ have different parities, the cardinality of $x+y$ must be odd that further implies, after recalling that $\mathcal{H}$ consists of nonempty graphs with even number of edges, that $x+y$ is not isomorphic to a graph in $\Hcal$. Thus, $\Gcal'$ is an $\Hcal$-code.
\end{proof}

Since $\mathcal{G}'\subseteq V_{x_0}$ and $V_{x_0}$ is a central subspace of dimension $m$, by Claim \ref{c3.5} and arguing as in Case 1, we conclude that
\begin{equation} \label{e3.20}
\delta_m(\Hcal) \geqslant \prob\big[ \mathcal{G}'\, \big| \, V_{x_0}\big]
= \prob\big[ \Gcal_0\,\big|\, V_{x_0}\big] + \prob\big[ \Gcal_1\,\big|\, V_{x_0}\big] \stackrel{\eqref{e3.16}}{\geqslant} \delta_n(\Hcal)+ \|\widehat{f}\|_{\ell_\infty}.
\end{equation}
The above cases are exhaustive, and so the entire proof is completed.
\end{proof}

\section{Nonclassical polynomials, and the inverse theorem in characteristic two} \label{sec4}

Throughout this section, let $\Ical$ be a nonempty finite set; also let $\mathbb{T} :=\mathbb{R}/\mathbb{Z}$ denote the torus. For every function $f \colon \Ftwo{\Ical} \to \mathbb{T}$ and every $h\in \Ftwo{\Ical}$, let $\Delta_h f\colon \Ftwo{\Ical}\to \mathbb{T}$ denote the \emph{(discrete) derivative of $f$ in the direction $h$}, that is,
\begin{equation} \label{e4.1}
\Delta_hf(x):= f(x+h)-f(x),
\end{equation}
for all $x\in \Ftwo{\Ical}$.

\begin{defn}[Nonclassical polynomials] \label{d4.1}
Let $d$ be a nonnegative integer. We say that a map $P\colon  \Ftwo{\Ical}\to \mathbb{T}$ is a \emph{nonclassical polynomial of degree at most $d$} if for every $h_1,\dots,h_{d+1}, x\in \Ftwo{\Ical}$,
\begin{equation} \label{e4.2}
\Delta_{h_1}\cdots \Delta_{h_{d+1}} P(x)=0.
\end{equation}
The \emph{degree $\deg(P)$ of $P$} is defined to be the least nonnegative integer $d$ such that \eqref{e4.2} holds~true.
\end{defn}

We will need the following characterization of nonclassical polynomials due to Tao--Ziegler \cite[Lemma 1.6]{TZ12}.

\begin{lem} \label{lem:nonClassicalPolynomials}
Let $d$ be a nonnegative integer, and let $P\colon \Ftwo{\Ical}\to\mathbb{T}$ be a function. Then $P$ is a nonclassical polynomial of degree at most $d$ if and only if there exist $\alpha\in\mathbb{T}$ and, for every\footnote{Recall that by $\binom{\Ical}{\mik d}$ we denote the set of all \emph{nonempty} subsets of $\Ical$ with cardinality at most $d$.} $S\in\binom{\Ical}{\leqslant d}$, a coefficient $\lambda_S \in \big\{\frac{j}{2^{d-|S|+1}}\colon j=0,\dots,2^{d-|S|+1}-1\big\}$ such that\footnote{In \eqref{eq:4.1.07}, each $x(i) \in \mathbb{F}_2$ is identified with an element of\, $\mathbb{R}$ (either $0$ or $1$, in the obvious way).}, for all $x\in\Ftwo{\Ical}$,
\begin{equation}\label{eq:4.1.07}
P(x) = \alpha + \sum_{S\in \binom{\Ical}{\leqslant d}} \lambda_S \prod_{i\in S}x(i) \mod 1.
\end{equation}
\end{lem}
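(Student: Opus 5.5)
We prove the two directions separately; both rest on one elementary computation with the monomials $m_S(x):=\prod_{i\in S}x(i)$, viewed as $\{0,1\}$-valued integers. For $x\in\Ftwo{\Ical}$ and $h\in\Ftwo{\Ical}$ we have, as integers, $(x+h)(i)=x(i)+h(i)-2x(i)h(i)$.

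\emph{The ``if'' direction.} Since constants have degree $0$ and degree is subadditive under sums, it suffices to show that each term $\lambda\, m_S$ with $\lambda\in 2^{-k}\mathbb{Z}$ has degree at most $|S|+k-1$. Call $|S|+k-1$ the \emph{weight} of such a term; a term with $k\mik 0$ is an integer, hence vanishes mod $1$. We argue by induction on the weight. Expanding
\[
\Delta_h(2^{-k}m_S)(x)=2^{-k}\Big(\prod_{i\in S}\big(x(i)+h(i)-2x(i)h(i)\big)-\prod_{i\in S}x(i)\Big),
\]
each term corresponds to choosing, for each $i\in S$, one of the three summands $x(i)$, $h(i)$, $-2x(i)h(i)$, with the all-$x(i)$ choice removed. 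Suppose $a$ indices choose $-2x(i)h(i)$, and let $T\subseteq S$ be the set of indices choosing $x(i)$ or $-2x(i)h(i)$. The resulting term is a constant multiple, in $2^{-(k-a)}\mathbb{Z}$, of $m_T$, so its weight is at most $|T|+k-a-1$. If $a\meg 1$ this is at most $|S|+k-2$. If $a=0$, then $T\subsetneq S$, so again the weight drops by at least one. Hence $\Delta_h$ lowers the weight, and induction shows that $d+1$ derivatives annihilate every term of weight at most $d$. With $\lambda_S\in 2^{-(d-|S|+1)}\mathbb{Z}$ the weight is exactly $d$, so this gives the claim.

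\emph{The ``only if'' direction.} First, every $P\colon\Ftwo{\Ical}\to\mathbb{T}$ has a unique expansion $P=\sum_{S\subseteq\Ical}c_S\,m_S$ with $c_S\in\mathbb{T}$. This is Möbius inversion on the Boolean lattice, $c_S=\sum_{T\subseteq S}(-1)^{|S|-|T|}P(\mathbbm{1}_T)$, which is valid in any abelian group. We set $\alpha:=c_\emptyset$. It remains to show that $c_S=0$ for $|S|>d$ and that $2^{d-|S|+1}c_S=0$ for $1\mik|S|\mik d$; the latter means we can pick a representative $\lambda_S$ in the stated range.

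The key identities are, for $e_i$ the $i$-th unit vector,
\[
\Delta_{e_i}m_S=(1-2x(i))\,m_{S\setminus\{i\}}\ \ \text{ for } i\in S, \qquad \Delta_{e_i}\big((1-2x(i))g\big)=-2(1-2x(i))g
\]
whenever $g$ does not depend on $x(i)$. Moreover, $\Delta_{e_i}m_T=0$ if $i\notin T$. Now fix $S$ with $1\mik |S|\mik d+1$, and choose integers $r_i\meg1$ for $i\in S$ with $\sum_{i\in S}r_i=d+1$. Apply $\prod_{i\in S}\Delta_{e_i}^{r_i}$ to $P$ and evaluate at $x=0$. Monomials $m_T$ with $T\not\supseteq S$ are killed. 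Monomials with $T\supsetneq S$ retain a factor $x(j)$ with $j\notin S$, which vanishes at $0$. Only $c_S$ survives, so
\[
0=\pm 2^{\sum_{i\in S}(r_i-1)}c_S=\pm 2^{d+1-|S|}c_S .
\]
This gives the coefficient constraint for $|S|\mik d$ and gives $c_S=0$ for $|S|=d+1$. For $|S|>d+1$ we argue by induction on $|S|$. Pick $S'\subseteq S$ with $|S'|=d+1$ and apply $\prod_{i\in S'}\Delta_{e_i}$ at $x=\mathbbm{1}_{S\setminus S'}$. This yields $\sum_{S'\subseteq T\subseteq S}\pm c_T=0$, and all terms with $T\neq S$ vanish by the inductive hypothesis, so $c_S=0$.

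\emph{Main obstacle.} The only delicate point is the bookkeeping in the ``if'' direction. There we must check that the $2$-adic denominators trade off exactly against the monomial degree, which is what the weight $|S|+k-1$ captures. Everything else is a direct computation.
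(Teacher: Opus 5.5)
Your proof is correct. The paper gives no argument for this lemma. It is quoted from Tao--Ziegler \cite[Lemma 1.6]{TZ12}, which describes nonclassical polynomials on $\mathbb{F}_p^n$ for every prime $p$; there each extra power of $p$ in a denominator costs $p-1$ in degree, and the monomials are no longer multilinear in the integer lift. The paper uses only the case $p=2$, in Lemma~\ref{l5.4}. So your write-up is a genuinely different, self-contained route.

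In the `if' direction, the weight $|S|+k-1$ is exactly the right invariant, since each derivative either deletes a variable from the monomial or gains a factor of $2$. In the `only if' direction, you use a feature special to characteristic two: every $\mathbb{T}$-valued function on $\mathbb{F}_2^{\mathcal{I}}$ has a unique multilinear expansion $\sum_S c_S m_S$ with $c_S\in\mathbb{T}$, by M\"obius inversion. What remains is the torsion condition $2^{d+1-|S|}c_S=0$, which you get from one evaluation of $\prod_{i\in S}\Delta_{e_i}^{r_i}P$ at $0$, plus the short induction killing $c_S$ for $|S|>d+1$. As a by-product you also get uniqueness of $\alpha$ and of the $\lambda_S$ in the prescribed ranges. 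The citation buys generality and brevity; your argument buys an elementary proof of exactly the case the paper needs.

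Three cosmetic remarks:
\begin{enumerate}
\item[$\bullet$] `Degree is subadditive' should say that functions of degree at most $d$ are closed under addition.
\item[$\bullet$] The induction in the `if' direction is cleanest stated as `any term of weight at most $w$ is annihilated mod $1$ by any $w+1$ derivatives', with base case $w=-1$ (integer coefficient, no derivatives).
\item[$\bullet$] In the step $|S|>d+1$ all signs are in fact $+$, since the coordinates in $S'$ vanish at $\mathbbm{1}_{S\setminus S'}$.
\end{enumerate}
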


An elementary argument---see, e.g., \cite[Theorem 6.7]{HHL19}---shows that for every function $f\colon \Ftwo{\Ical}\to \mathbb{C}$ and every nonclassical polynomial $P\colon \Ftwo{\Ical}\to\mathbb{T}$ of degree at most $d$,
\begin{equation} \label{4.v2.new}
\Big| \underset{x\in\Ftwo{\Ical}}{\ave} \Big[f(x) \exp\big(2\pi i P(x)\big)\Big] \Big| \leqslant \|f\|_{U_{d+1}}.
\end{equation}
The following deep result due to Tao--Ziegler \cite[Theorem 1.10]{TZ12} provides, essentially, a converse of this estimate that characterizes functions $f\colon \Ftwo{\Ical}\to\mathbb{C}$ of modulus at most one with large $U_d$-norm.

\begin{thm}[Inverse theorem in characteristic two] \label{thm:inverse}
For every integer $d\meg 1$ and every $\varepsilon>0$, there exists a constant $\delta = \delta(d,\varepsilon)>0$ with the following property. Let $\overline{\mathbb{D}} := \{z\in \mathbb{C}\colon |z|\leqslant 1\}$ denote the closed unit disc, and let $f\colon
\Ftwo{\Ical}\to \overline{\mathbb{D}}$ be a function with $\|f\|_{U_{d+1}}\geqslant\varepsilon$. Then there exists a nonclassical
polynomial\, $P\colon \Ftwo{\Ical}\to\mathbb{T}$ of degree at most $d$ such~that
\begin{equation} \label{e4.4}
\Big| \underset{x\in\Ftwo{\Ical}}{\ave}\Big[f(x) \exp\big(2\pi i P(x)\big)\Big] \Big| \geqslant\delta.
\end{equation}
\end{thm}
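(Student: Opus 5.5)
We will not reprove this result: it is exactly \cite[Theorem 1.10]{TZ12}, and in the paper it is simply imported, and used only through the implication ``large $U_{d+1}$-norm implies correlation with a nonclassical polynomial of degree at most $d$''. Still, here is how I would organize a proof, following the strategy of Tao--Ziegler, and where I expect the difficulty to lie.

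\textbf{Base case $d=1$.} This case is elementary and follows from Section \ref{subsec3.1}. If $f$ takes values in $\overline{\mathbb{D}}$, then $\|f\|_{L_2}\mik 1$. Hence Corollary \ref{cor:U2FromEllInf} gives $\varepsilon^2\mik\|f\|_{U_2}^2\mik\|\widehat f\|_{\ell_\infty}$. So there is $\xi$ with $|\langle f,w_\xi\rangle_{L_2}|\meg\varepsilon^2$. Now $w_\xi=\exp(2\pi i P)$ with
\[ P(x)=\tfrac12\sum_{i\in\xi}x(i) \mod 1, \]
which is a (classical) polynomial of degree at most $1$ by Lemma \ref{lem:nonClassicalPolynomials}. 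Thus $\delta(1,\varepsilon)=\varepsilon^2$ works.

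\textbf{Case $d\meg 2$.} I would argue by contradiction together with a compactness/correspondence principle. Suppose the theorem fails for some $d$ and $\varepsilon$. Then there are sets $\Ical_k$ and functions $f_k$ with $\|f_k\|_{U_{d+1}}\meg\varepsilon$ whose correlation with every nonclassical polynomial of degree at most $d$ tends to $0$. Passing to an ultraproduct (or a Furstenberg-type correspondence), I would encode this sequence as a measure-preserving action of $\mathbb{F}_2^{\omega}=\bigoplus_{\mathbb{N}}\mathbb{F}_2$ on a probability space $X$, with a limit function $F$. The Gowers norms of the $f_k$ then become the Host--Kra seminorms of $F$. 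The condition $\|F\|_{U^{d+1}}>0$ means that $F$ has a nontrivial projection onto the characteristic factor $Z_{<d+1}$.

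The core step is to show that, for $\mathbb{F}_2^\omega$-actions, $Z_{<d+1}$ is an Abramov system. That is, $L^2(Z_{<d+1})$ is spanned by phase polynomials of degree at most $d$: functions $\phi$ of modulus one whose $(d+1)$-fold multiplicative derivatives are identically $1$. I would prove this by induction on $d$. One writes $Z_{<d+1}$ as an abelian extension of $Z_{<d}$ by a cocycle $\rho$, and uses the Host--Kra cube structure to show that $\rho$ is cohomologous to a phase polynomial of degree at most $d$. This requires solving cohomological equations in a totally disconnected compact abelian group of exponent a power of $2$.

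In the last step, I would pull a phase polynomial correlating with $F$ back to finite level. This gives functions of modulus one on $\Ftwo{\Ical_k}$ that are approximately polynomial. A rigidity/approximation argument then upgrades them to genuine nonclassical polynomials of degree at most $d$, described by Lemma \ref{lem:nonClassicalPolynomials}. Their correlation with $f_k$ stays bounded below, which is the desired contradiction.

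\textbf{Main obstacle.} I expect the hardest part to be the cohomological step in low characteristic. When $d\meg 2$, phase polynomials need not take values in $\{\pm1\}$. They can take values in the $2^{d}$-th roots of unity, and this is precisely why nonclassical polynomials are required at all. The induction has to track these higher roots of unity while controlling the degree. Tao--Ziegler handle this by reducing to finite-dimensional Lie-type structures and exploiting the exponent bound. I would first try to imitate their splitting of the cocycle into a coboundary plus a polynomial part, using the inductive hypothesis at degree $d-1$ applied to the derivatives $\Delta_h\rho$.
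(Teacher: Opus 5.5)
Your proposal agrees with the paper, which gives no proof of Theorem~\ref{thm:inverse} and simply imports it from \cite[Theorem 1.10]{TZ12}. Your self-contained check of the case $d=1$ via Corollary~\ref{cor:U2FromEllInf} is correct, with $\delta(1,\varepsilon)=\varepsilon^2$ and $w_\xi=\exp(2\pi i P)$ for $P(x)=\frac12\sum_{i\in\xi}x(i) \bmod 1$.

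The $d\meg 2$ outline is not needed, and it does not accurately describe \cite{TZ12}. As far as I recall, in low characteristic the ergodic cohomological route of Bergelson--Tao--Ziegler only gives correlation with phases of some larger degree $C(d)$. Tao--Ziegler then reach degree $d$ through a separate degree-lowering argument, not through the cocycle induction you describe.
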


\section{Partitioning nonclassical polynomials} \label{sec5}

The following theorem is one of the main ingredients of the proof of Theorem \ref{thm:MainCliques}. It shows that, for any nonclassical polynomial $P\colon \mathbb{F}_2^{\binom{[n]}{\leqslant 2}}\to\mathbb{T}$, one can almost entirely partition $\mathbb{F}_2^{\binom{[n]}{\leqslant 2}}$ into $\mathrm{HJ}$-subspaces of large dimension such that $P$ is constant on each of them.

\begin{thm}[Partitioning nonclassical polynomials] \label{thm_part_nclas_pol}
Let $\eta>0$, and let $m,d$ be positive integers. Then there exists a positive integer $n_2 = n_2(\eta, m,d)$ with the following property.
Let $n\geqslant n_2$ be an integer, and let $P\colon \Ftwo{\binom{[n]}{\mik 2}}\to \mathbb{T}$ be a nonclassical polynomial of degree at most $d$. Then there exists a collection $\mathcal{V}$ of pairwise disjoint $m$-dimensional block\, $\mathrm{HJ}$-subspaces of\, $\mathbb{F}_2^{\binom{[n]}{\leqslant2}}$ such that
\begin{enumerate}
\item[(i)] $\prob\Big[\mathbb{F}_2^{\binom{[n]}{\mik 2}}\setminus (\cup\mathcal{V})\Big] \mik \eta$, and
\item[(ii)] for every $V\in\mathcal{V}$, the polynomial $P$ is constant on $V$.
\end{enumerate}
\end{thm}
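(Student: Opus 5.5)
Throughout, write $P$ in the Tao--Ziegler normal form of Lemma \ref{lem:nonClassicalPolynomials}, $P(x)=\alpha+\sum_S \lambda_S\prod_{p\in S}x(p)$. Here $S$ ranges over nonempty sets of at most $d$ coordinates $p\in\binom{[n]}{\mik 2}$, and $\lambda_S$ takes values in the finite group $2^{-d}\mathbb{Z}/\mathbb{Z}$. The plan is a density-increment / Ramsey-type argument on the wildcard structure. We work block by block: we partition $[n]$ into many consecutive intervals and, inside each interval, look for a wildcard set on which $P$ ``does not see'' the corresponding variable.

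The key observation is how the restriction $P\circ e_V$ behaves when $V$ is an HJ-subspace. Plugging $x=\mathrm{Id}_{\boldsymbol I}(y)+c$ into the normal form shows that $P\circ e_V$ is again a nonclassical polynomial of degree at most $d$ on $\mathbb{F}_2^{\binom{[m]}{\mik 2}}$. Its coefficient at a monomial $\prod_{q\in T}y(q)$ is a sum over all $S$ whose coordinates cover exactly the variable sets $\mathrm{var}(V)_q$, $q\in T$, with multiplicities. This needs care, since $x(p)^2=x(p)$ in $\mathbb{R}$ only for $0/1$ values, and a product of repeated $y(q)$ collapses. Thus $P$ is constant on $V$ if and only if all these aggregated coefficients vanish in $\mathbb{T}$.

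The aggregated coefficient depends on $|I_i|$ modulo powers of $2$ and on $c$. The idea is to choose each $I_i$ as a union of $2^{d}$ (or $2^{d+1}$) ``copies'' that look identical to $P$. Then every aggregated sum is a multiple of $2^d\lambda$, which vanishes since $\lambda\in 2^{-d}\mathbb{Z}$.

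Concretely, I would proceed as follows.
\begin{enumerate}
\item[(a)] Fix the constant part first. Condition on the coordinates outside the eventual wildcard region; this makes the restricted polynomial's coefficients functions of the outside values $c$.
\item[(b)] Colour each small tuple of vertices (and each pair/loop pattern) by the finitely many possible coefficient data of $P$ relative to $c$. Apply the hypergraph Ramsey theorem (or, better, Graham--Rothschild / the Hales--Jewett theorem in its ``block'' form, with $2^{O(d)}$ colours) to find $m\cdot 2^{d+1}$ vertices, arranged in consecutive blocks, on which the coloring is order-invariant.
\item[(c)] Take $I_i$ to be the $i$-th block of $2^{d+1}$ vertices. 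Then every aggregated coefficient is a sum of $2^{d+1}$-divisible copies of a single value in $2^{-d}\mathbb{Z}/\mathbb{Z}$, hence zero. So $P$ is constant on the resulting block HJ-subspace.
\end{enumerate}

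To get the almost-partition (i), rather than a single subspace, I would iterate in the usual way. Split $[n]$ into a long prefix $[N]$ and the rest. For each fixing $z$ of the coordinates not contained in $\binom{[N]}{\mik2}$, the Ramsey step produces a single wildcard configuration within $[N]$ that works uniformly for the finitely many colour classes. Translating this subspace by the free coordinates then tiles a fixed positive fraction $\theta=\theta(m,d)>0$ of the fibre. Repeating on the uncovered portion with fresh disjoint vertex blocks, about $\log(1/\eta)/\theta$ times, leaves uncovered mass at most $\eta$; this forces $n_2$ to be large. Alternatively, one can exploit the fact that the covered set is a union of cosets of a subspace $W$ of the span of the $b_q$, cosets being indexed by $c$ modulo $W$.

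The main obstacle is step (c) together with making the covering uniform. The interplay between loops and edges in $\mathrm{var}(V)_q$ produces monomials mixing $y(\{i\})$ and $y(\{i,j\})$, so the counting of how many $S$ land on a given $T$ involves multinomials in $|I_i|$. One must check that choosing $|I_i|$ divisible by $2^{d+1}$ with homogeneous colouring really kills all of them, including contributions that depend on $c$. I would first verify this for $d=1,2$ by hand. If divisibility alone fails, I would instead choose the $I_i$ inductively so that each new block cancels the coefficients created with earlier blocks, via a pigeonhole over the finite coefficient group.
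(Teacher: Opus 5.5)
You have a genuine gap at step (c), exactly at the point you flag as the main obstacle. Only the coefficients of $P\circ e_V$ at monomials $\prod_{q\in T}y(q)$ with $|T|=d$ are determined by the wildcard sets alone. For $|T|<d$, the aggregated coefficient also collects monomials $S$ that mix variable-set coordinates with coordinates of $\mathrm{const}(V)$, notably loops $\{u\}$ and cross pairs $\{u,v\}$ with $v\in I_i$ and $u\notin\bigcup_i I_i$. Ramsey canonicity inside the homogeneous set $X$ says nothing about those that involve vertices outside $X$. A colouring by ``coefficient data relative to $c$'' also cannot be made to work for all constant parts simultaneously with a bounded number of colours. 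Steps (a)--(b) are moreover circular, since $c$ is defined relative to a wildcard region that the Ramsey step is supposed to produce.

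Here is a concrete failure. Let $E$ be a random graph on $[n]$, and let $P(x)=\tfrac12\sum_{\{u,v\}\in E}x(\{u\})x(\{v\})$, which has degree $2$. For a block $\mathrm{HJ}$-subspace $V$ with wildcard sets $I_1,\dots,I_m$, let $U$ be the set of $u\notin\bigcup_i I_i$ with $\mathrm{const}(V)(\{u\})=1$. Then, for some $\beta_V\in\mathbb{T}$,
\[ P\circ e_V(y)=\beta_V+\sum_{i=1}^m\tfrac12\big(e_E(I_i)+e_E(I_i,U)\big)\,y(\{i\})+\sum_{1\leqslant i<i'\leqslant m}\tfrac12\,e_E(I_i,I_{i'})\,y(\{i\})\,y(\{i'\}). \]
Homogeneity of $E$ on $X$ and divisibility of the $|I_i|$ kill $e_E(I_i)$ and $e_E(I_i,I_{i'})$ modulo $2$. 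However, $e_E(I_i,U)=\sum_{u\in U}|N_E(u)\cap I_i|$. With high probability, for every bounded $I_i$, some $u\notin X$ has odd degree into $I_i$. Hence $P$ is nonconstant on at least half of the translates of any fixed wildcard configuration. The claim in (c) is therefore false, and the tiling step (``translating this subspace by the free coordinates'') collapses. Working inside a fibre over $[N]$ does not help, because the translates still vary loops and cross pairs at the vertices of $[N]\setminus X$.

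The missing idea is \emph{degree lowering}. Aim only at the top coefficients $\lambda'_T$ with $|T|=d$. Such a $T$ receives contributions only from monomials with exactly one coordinate in each of $d$ distinct variable sets and none in $\mathrm{const}(V)$, so these coefficients do not depend on the constant part. Ramsey canonicity in $X$ and counting by distributed types then kill them for one fixed wildcard configuration and \emph{all} constant parts at once. (The paper takes $|I_i|=(d+1)!\,2^k$, so that each nontrivial factor $\binom{|I_i|}{s}$ with $1\leqslant s\leqslant d+1$ is divisible by $2^k$.)

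This uniformity is what gives the product-type almost-partition of Lemma~\ref{lem_ind_step}. Round $j$ takes the points whose restriction to $\binom{X_j}{\leqslant 2}$ is constant on the variable sets but whose restrictions to the earlier $\binom{X_{j'}}{\leqslant 2}$ are not, leaving mass $(1-\eta_1)^\ell$ uncovered.

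The $c$-dependent lower-order terms are then handled by recursion. Apply the lemma again to $P\circ e_V$ separately inside each $V$, with a fresh Ramsey set depending on $V$, $d$ times in all. This uses that compositions of block $\mathrm{HJ}$-embeddings are block $\mathrm{HJ}$-embeddings. In the example above, round one kills the quadratic part, and round two, run inside each $V$, kills the $U$-dependent linear part.

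Note also that deleting the top monomials of a Tao--Ziegler normal form need not lower the nonclassical degree: $\tfrac14\,x(p)$ for a single coordinate $p$ has degree $2$. This is why the paper first writes $P=\alpha+2^{-d}Q$ with $Q$ a $\mathbb{Z}_{2^d}$-valued integer polynomial (Lemma~\ref{l5.4}). It then inducts on the monomial degree with the modulus fixed.
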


\begin{rem} \label{r5.2}
The density polynomial Hales--Jewett conjecture predicts that, for \emph{any} function $f\colon \Ftwo{\binom{[n]}{\mik 2}}\to\mathbb{C}$ with $\|f\|_{L_2}\mik 1$, one can almost entirely partition $\mathbb{F}_2^{\binom{[n]}{\leqslant 2}}$ into $\mathrm{HJ}$-subspaces of large dimension such that $f$ is almost constant on each of them. While Theorem \ref{thm_part_nclas_pol} falls short of proving such a powerful statement, it can be viewed as evidence towards the conjecture.
\end{rem}

\subsection{Reduction to integer polynomials} \label{subsec5.1}

For the proof of Theorem \ref{thm_part_nclas_pol}, it is convenient to work with the following larger class of polynomials.

\begin{defn}[Integer polynomials] \label{d5.3}
Let $d$ be a nonnegative integer, and let $k,n$ be positive integers. We say that a function\footnote{Here, $\mathbb{Z}_{2^k}:=\mathbb{Z}/2^k\mathbb{Z}$ denotes the cyclic group of integers mod $2^k$.} $Q\colon \mathbb{F}_2^{\binom{[n]}{\leqslant 2}} \to \mathbb{Z}_{2^k}$ is a \emph{$k$-integer polynomial of degree at most~$d$} if it is of the form\footnote{Similarly, in \eqref{eq3.01}, each $x(e) \in \mathbb{F}_2$ is identified with an element of\, $\mathbb{Z}_{2^k}$ (either $0$ or $1$, in the obvious way).}
\begin{equation} \label{eq3.01}
Q(x) = \alpha + \sum_{F\in \binom{\binom{[n]}{\leqslant 2}}{\leqslant d}}\lambda_F \prod_{e\in F}x(e) \mod 2^k,
\end{equation}
where $\alpha\in \mathbb{Z}_{2^k}$ and $\lambda_F\in\mathbb{Z}_{2^k}$ for every $F\in \binom{\binom{[n]}{\leqslant2}}{\leqslant d}$. We say that $Q$ is of \emph{degree $d$}, and we write $\deg(Q) = d$, if $d$ is the smallest nonnegative integer such that\, $Q$ admits a representation of the form \eqref{eq3.01}.
\end{defn}

The following lemma makes the link between nonclassical and integer polynomials.

\begin{lem} \label{l5.4}
Let $n, d$ be positive integers. Then every nonclassical polynomial $P\colon \Ftwo{\binom{[n]}{\mik 2}}\to \mathbb{T}$ of degree at most $d$ can be written as  $f \circ Q$, where $Q \colon \Ftwo{\binom{[n]}{\leqslant2}}\to \mathbb{Z}_{2^d}$ is a $d$-integer polynomial of degree at most $d$ and $f \colon \mathbb{Z}_{2^d} \to \mathbb{T}$ is a map.
\end{lem}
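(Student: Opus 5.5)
We start from the Tao--Ziegler characterization, Lemma \ref{lem:nonClassicalPolynomials}, applied with index set $\Ical=\binom{[n]}{\mik 2}$. It writes
\[ P(x)=\alpha+\sum_{S}\lambda_S\prod_{e\in S}x(e) \mod 1, \]
with each $\lambda_S\in 2^{-(d-|S|+1)}\mathbb{Z}$. Since $|S|\meg 1$, every $\lambda_S$ lies in $2^{-d}\mathbb{Z}$, so we may write $\lambda_S=\mu_S/2^d$ with $\mu_S\in\{0,\dots,2^d-1\}$.

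Next we define
\[ Q(x):=\sum_{S\in\binom{\Ical}{\mik d}}\mu_S\prod_{e\in S}x(e) \mod 2^d, \]
viewing each $x(e)\in\{0,1\}$ as an element of $\mathbb{Z}_{2^d}$. This is a $d$-integer polynomial of degree at most $d$ in the sense of Definition \ref{d5.3}, with $\alpha=0$ and coefficients $\lambda_F:=\mu_F$ indexed by $F\in\binom{\Ical}{\mik d}$.

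We then set $f(j):=\alpha+j/2^d \mod 1$ for $j\in\mathbb{Z}_{2^d}$. This map is well defined because changing the representative $j$ by a multiple of $2^d$ changes $j/2^d$ by an integer.

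It remains to check $P=f\circ Q$. For fixed $x$, let $N(x):=\sum_S\mu_S\prod_{e\in S}x(e)$ be the integer before any reduction. Then $Q(x)\equiv N(x)\pmod{2^d}$, so $f(Q(x))=\alpha+N(x)/2^d \mod 1$. Since $N(x)/2^d=\sum_S\lambda_S\prod_{e\in S}x(e)$, this equals $P(x)$. The only point requiring care is that reducing $N(x)$ mod $2^d$ before dividing by $2^d$ changes the value only by an integer, which is exactly why $f$ is well defined. There is no real obstacle: the whole content is the uniform denominator bound $2^{d-|S|+1}\mik 2^d$ supplied by Lemma \ref{lem:nonClassicalPolynomials}.
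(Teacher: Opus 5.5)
Your proof is correct and is essentially the paper's own argument. You rescale the Tao--Ziegler coefficients by $2^d$ to get a $d$-integer polynomial $Q$ with zero constant term, and you absorb $\alpha$ into the map $f(j)=\alpha+j/2^d$. The paper does the same, concluding that $P(x)=\alpha+2^{-d}Q(x) \mod 1$.
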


\begin{proof}
For notational convenience, set $\Ical:= \binom{[n]}{\mik 2}$. By Lemma \ref{lem:nonClassicalPolynomials}, there exist $\alpha\in\mathbb{T}$ and, for every $S\in\binom{\Ical}{\leqslant d}$, a coefficient $\lambda_S \in \big\{\frac{j}{2^{d-|S|+1}}\colon j=0,\dots,2^{d-|S|+1}-1\big\}$ such that $P$ is represented~as
\begin{equation} \label{e5.2}
P(x) = \alpha + \sum_{S\in \binom{\Ical}{\leqslant d}} \lambda_S \prod_{e\in S} x(e) \mod 1.
\end{equation}
For every $S \in \binom{\Ical}{\mik d}$, set $\lambda'_S := 2^d \lambda_S$. Identifying $\mathbb{Z}_{2^d}$ with $\{0,\dots,2^d-1\}$ (in the obvious way), we see that $\lambda'_S\in \mathbb{Z}_{2^d}$ for every $S\in\binom{\Ical}{\mik d}$. Define $Q \colon \Ftwo{\Ical}\to \mathbb{Z}_{2^d}$ by setting
\begin{equation} \label{e5.3}
Q(x) := \sum_{S\in \binom{\Ical}{\leqslant d}} \lambda'_S \prod_{e\in S} x(e) \mod 2^d.
\end{equation}
Clearly, $Q$ is a $d$-integer polynomial of degree at most $d$ and $P(x) = \alpha +2^{-d} Q(x) \mod 1$ for every $x\in \mathbb{F}_2^{\Ical}$.
\end{proof}

Lemma \ref{l5.4} immediately shows that Theorem \ref{thm_part_nclas_pol} follows from the following corresponding partitioning theorem for integer polynomials.

\begin{thm}[Partitioning integer polynomials] \label{thm_part_int_pol}
Let $\eta>0$, and let $m,d, k$ be positive integers. Then there exists a positive integer $n_3 = n_3(\eta, m,d, k)$ with the following property. Let $n\meg n_3$ be an integer, and let $Q\colon \Ftwo{\binom{[n]}{\mik 2}}\to \mathbb{Z}_{2^k}$ be a $k$-integer polynomial of degree at most $d$. Then there exists a collection $\mathcal{V}$ of pairwise disjoint $m$-dimensional block\, $\mathrm{HJ}$-subspaces of\, $\mathbb{F}_2^{\binom{[n]}{\leqslant2}}$ such that
\begin{enumerate}
\item[(i)] $\prob\Big[ \mathbb{F}_2^{\binom{[n]}{\mik 2}} \setminus (\cup\mathcal{V})\Big] \mik \eta$, and
\item[(ii)] for every $V\in\mathcal{V}$, the polynomial $Q$ is constant on $V$.
\end{enumerate}
\end{thm}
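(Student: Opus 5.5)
The plan is to reduce to a \emph{single} coloring statement on a bounded number of vertices and then upgrade it to an almost-partition by an iterated product (``stages'') argument. The coloring statement is a polynomial Hales--Jewett theorem for integer polynomials: for all $m,d,k$ there is $M=M(m,d,k)$ such that every $k$-integer polynomial $R\colon\Ftwo{\binom{[M]}{\mik 2}}\to\mathbb{Z}_{2^k}$ of degree at most $d$ is constant on some $m$-dimensional block $\mathrm{HJ}$-subspace. The point to exploit is that restrictions behave well. If $V$ is a block $\mathrm{HJ}$-subspace with embedding $e_V$, then $Q\circ e_V$ is again a $k$-integer polynomial of degree at most $d$. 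Indeed, each coordinate of $e_V(y)$ is either a constant or a single variable $y(q)$, and monomials in $0/1$ variables stay monomials of no larger degree. So one may pass to subspaces freely without leaving the class.

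\textbf{Step 1 (finitary statement).} I would prove the finitary statement by induction on $d$, with $k$ fixed. Degree $0$ is trivial. For the inductive step, first apply the hypergraph Ramsey theorem. Each coefficient $\lambda_F$, $F\in\binom{\binom{[M]}{\mik 2}}{\mik d}$, is determined by $F$, and $V(F)$ has at most $2d$ vertices. Coloring each set of at most $2d$ vertices by the vector of coefficients of all monomials spanned by it (a bounded number of colors), we pass to a large vertex set on which the top-degree coefficients depend only on the order-type of $F$; call this a \emph{canonical} polynomial. Then group the surviving vertices into consecutive blocks $I_1,\dots,I_{m'}$ of a common size $L$ and restrict to the corresponding block $\mathrm{HJ}$-subspace. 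Every variable $y(q)$ then replaces $|\mathrm{var}_q|$ equal coordinates, so a canonical top-degree coefficient $\lambda$ is multiplied by a count which is a polynomial in $L$. Choosing $L$ divisible by a large power of $2$, these counts vanish mod $2^k$, killing the degree-$d$ part. What remains has degree at most $d-1$, and the induction hypothesis applies. If the top-degree counts do not all become divisible by $2^k$ uniformly (pairs versus loops produce different counts), I would split into the finitely many order-types and kill them one at a time, passing to further block subspaces.

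\textbf{Step 2 (almost-partition).} Split $[n]$ into $T$ consecutive groups $J_1,\dots,J_T$ of size $M$. The difficulty is that the restriction of $Q$ to $\binom{J_t}{\mik 2}$, with all other coordinates frozen, depends on the frozen values. However, it ranges over the finite set $\mathcal{P}_M$ of degree-$\mik d$ $k$-integer polynomials on $M$ vertices. I would therefore strengthen Step 1 to a \emph{simultaneous} version: for any finite list $R_1,\dots,R_N$ of such polynomials there is one $m$-dimensional block $\mathrm{HJ}$-subspace (with wildcard sets and constant part inside $\binom{J_t}{\mik 2}$) on which every $R_j$ is constant. This follows from Step 1 applied to the $\mathbb{Z}_{2^k}^N$-valued polynomial $(R_1,\dots,R_N)$; Step 1 goes through verbatim for vector-valued polynomials, at the cost of more Ramsey colors. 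The circularity, that $N=|\mathcal{P}_M|$ depends on $M$, is the main obstacle. To avoid it, I would only use the cross-terms of bounded type: after a preliminary Ramsey canonization of $Q$ on all of $[n]$, the restriction to $J_t$ is determined by the frozen data through a bounded number of ``linear forms'' in the outside variables. This makes the relevant family of restrictions bounded independently of $M$.

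\textbf{Step 3 (counting).} Given a single sub-cube $W_t\subseteq\Ftwo{\binom{J_t}{\mik 2}}$ of the form $c_t+\mathrm{Im}\,\mathrm{Id}_{\boldsymbol I}$ on which all the restrictions are constant, translate it by all $2^{\binom{M}{\mik 2}}$ values of the coordinates in $\binom{J_t}{\mik 2}$ \emph{not} in the variable sets. Together with arbitrary values outside $\binom{J_t}{\mik 2}$, these translates give disjoint block $\mathrm{HJ}$-subspaces (of $\Ftwo{\binom{[n]}{\mik 2}}$) covering a fixed proportion $\theta=\theta(m,d,k)>0$ of the whole space. The membership event depends only on coordinates in $\binom{J_t}{\mik 2}$, and these are independent across $t$. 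Assign each $x$ to the first stage $t$ that covers it. The translates at different stages are distinct subspaces, and the family assigned at stage $t$ can be taken to be those translates whose coordinates in $\binom{J_s}{\mik 2}$, $s<t$, avoid the earlier events. With this assignment the subspaces are pairwise disjoint, and the uncovered mass is at most $(1-\theta)^T\mik\eta$ once $T\meg\log(1/\eta)/\theta$. Setting $n_3:=TM$ then proves the theorem.
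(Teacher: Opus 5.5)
There is a genuine gap, and it sits in Steps~2--3. Step~3 translates $W_t$ by arbitrary values of all coordinates outside the variable sets, both inside and outside $\binom{J_t}{\mik 2}$. So what you actually need is a configuration $\boldsymbol I$ of wildcard sets in $J_t$ such that $Q$ is constant on \emph{every} block $\mathrm{HJ}$-subspace with $\mathrm{wild}=\boldsymbol I$, whatever its constant part. Constancy on the single cube $W_t$ does not transfer to its translates.

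Such an $\boldsymbol I$ need not exist, and canonization does not rescue it. Take $k=1$, $d=2$ and $Q(x)=\sum_{u\neq v}x(\{u\})\,x(\{u,v\}) \bmod 2$, summed over ordered pairs; this $Q$ is already canonical on all of $[n]$. Freezing everything outside $\binom{J_t}{\mik 2}$ gives $R_z(x)=\sum_{u\neq v\in J_t}x(\{u\})x(\{u,v\})+\sum_{u\in J_t}s_u(z)\,x(\{u\})+C(z)$, where $s_u(z)=\sum_{w\notin J_t}z(\{u,w\})$. So the dependence on the frozen data is through $M$ independent linear forms, not a bounded number. Requiring all $R_z$ to be constant on $W_t$ forces every loop coordinate $x(\{u\})$, $u\in J_t$, to be constant on $W_t$, which fails as soon as $\dim W_t\meg 1$. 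Concretely, let $a\in I_1$, $w\notin\bigcup_i I_i$, and let $c$ be the single edge $\{a,w\}$. Along $c+y\,b_{\{1\}}$ one gets $Q=y\big(|I_1|(|I_1|-1)+1\big)\equiv y \bmod 2$.

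The underlying reason is that the coefficients of degree $<d$ of $Q\circ e_V$ genuinely depend on $\mathrm{const}(V)$. They also depend on coefficients $\lambda_F$ with $\cup F\not\subseteq X$, which Ramsey never controls, since it only canonizes on a subset $X$ and never on all of $[n]$.

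The missing idea is to demand at each stage only what is insensitive to the constant part. Your degree-killing mechanism in Step~1 and your stage/counting argument in Step~3 are the right ingredients; what fails is the gluing in Step~2.

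The degree-$d$ coefficients of $Q\circ e_V$ involve only the $\lambda_F$ with $|F|=d$ and $F$ contained in the variable sets. So one canonization on $X$, together with wildcard sets of size $(d+1)!\,2^k$ inside $X$, kills them for every constant part at once (Lemma~\ref{lem_reduce_degree}). Plugging this into your Step~3 gives an almost-partition into block $\mathrm{HJ}$-subspaces on each of which the degree drops (Lemma~\ref{lem_ind_step}).

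One then iterates. Inside each such $V$, apply the same lemma afresh to $Q\circ e_V$, with a new canonization depending on $V$. This uses that a composition of block $\mathrm{HJ}$-embeddings is again a block $\mathrm{HJ}$-embedding. After at most $d$ rounds the polynomial is constant, and the uncovered mass is at most the sum of the losses. In short, the induction on $d$ has to be run at the level of almost-partitions, nested subspace by subspace, rather than through a single-subspace statement followed by a uniform choice of wildcard sets.
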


\subsection{Proof of Theorem \ref{thm_part_int_pol}} \label{subsec5.2}

We start with the following lemma that shows that the class of integer polynomials is stable by restrictions to $\mathrm{HJ}$-subspaces.

\begin{lem} \label{l5.5}
Let $n,k$ be positive integers, and let $Q\colon \mathbb{F}_2^{\binom{[n]}{\leqslant2}} \to \mathbb{Z}_{2^k}$ be a $k$-integer polynomial. Also let $V$ be a $\mathrm{HJ}$-subspace of\, $\mathbb{F}_2^{\binom{[n]}{\leqslant2}}$, set $m:=\dim(V)$, and let $e_V\colon \mathbb{F}_2^{\binom{[m]}{\mik 2}}\to V$ be its associated embedding. (See Definition \ref{d2.2}.) Then the function $Q \circ e_V\colon \mathbb{F}_2^{\binom{[m]}{\mik 2}} \to \mathbb{Z}_{2^k}$ is also a $k$-integer polynomial with $\deg(Q \circ e_V) \mik \deg(Q)$.
\end{lem}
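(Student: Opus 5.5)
The proof is a direct substitution argument. Write $Q$ in the form \eqref{eq3.01},
\[
Q(x)=\alpha+\sum_{F}\lambda_F\prod_{e\in F}x(e) \mod 2^k,
\]
where $F$ ranges over $\binom{\binom{[n]}{\mik 2}}{\mik d}$ and $d=\deg(Q)$. We then compute $Q(e_V(y))$ for $y\in\mathbb{F}_2^{\binom{[m]}{\mik 2}}$. By the definition of an $\mathrm{HJ}$-embedding, $e_V=\mathrm{Id}_{\boldsymbol{I}}+c$. The variable sets $\mathrm{var}(V)_q$, for $q\in\binom{[m]}{\mik 2}$, are pairwise disjoint, and each is disjoint from the support of $c$; the latter holds because $c$ vanishes on $\binom{I_1\cup\dots\cup I_m}{\mik 2}$, which contains every variable set. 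Hence each coordinate of $e_V(y)$ is one of two kinds. For $p\in\mathrm{var}(V)_q$ we have $e_V(y)(p)=y(q)$. For $p$ outside every variable set we have $e_V(y)(p)=c(p)\in\{0,1\}$, which is a constant. The point to record here is that every coordinate of $e_V(y)$ is either a constant in $\{0,1\}$ or a single coordinate $y(q)$. In particular, no sums mod $2$ of several $y$-coordinates appear. This matters because such sums would not lift to $\mathbb{Z}_{2^k}$ as polynomials of the same degree.

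Next we substitute into each monomial $\prod_{e\in F}x(e)$. Every factor $x(e)$ becomes either $0$, $1$, or $y(q_e)$ for some $q_e$. If some factor is $0$, the monomial vanishes. Otherwise the monomial equals $\prod_{q\in F'}y(q)$, where $F'=\{q_e\colon e\in F,\ e\notin\operatorname{supp}(c)\}$. Because $y(q)\in\{0,1\}$ is idempotent when viewed in $\mathbb{Z}_{2^k}$, repeated factors collapse, so $|F'|\mik|F|\mik d$. If $F'=\emptyset$, the term is a constant.

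Collecting terms, we define for each $F'\in\binom{\binom{[m]}{\mik 2}}{\mik d}$ the coefficient
\[
\lambda'_{F'}=\sum_{F\to F'}\lambda_F \mod 2^k,
\]
where the sum runs over those $F$ whose substituted monomial is $\prod_{q\in F'}y(q)$. The constant term $\alpha'$ is $\alpha$ plus the sum of $\lambda_F$ over those $F$ that become constant $1$. With these coefficients, $Q\circ e_V$ has the form \eqref{eq3.01} with degree at most $d$, which proves the claim.

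The only real care needed is in the coordinate description of $e_V$. One has to verify from \eqref{e2.1}–\eqref{e2.3} that the variable sets are pairwise disjoint, since distinct $q$ give distinct sets of wildcard blocks met by $p$. One also has to verify that $c$ vanishes on them, so that $e_V(y)(p)$ really is $y(q)$ and not $y(q)+c(p)$. This is immediate from the definitions, so I do not expect a genuine obstacle.
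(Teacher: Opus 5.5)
Your proposal is correct and is essentially the paper's proof. You substitute $e_V=\mathrm{Id}_{\boldsymbol{I}}+c$ coordinatewise, use idempotence of $0/1$ values in $\mathbb{Z}_{2^k}$, and regroup; your $\alpha'$ and $\lambda'_{F'}$ are exactly the paper's $\alpha'$ in \eqref{e5.4} and $\lambda'_T=\sum_{F\in\mathcal{H}_T}\lambda_F$ from \eqref{e5.5}--\eqref{e5.6}, and you also spell out the coordinate description of $e_V$ (disjoint variable sets, $c$ vanishing on them), which the paper leaves to a one-word ``observe''.
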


\begin{proof}
Write $Q$ as in \eqref{eq3.01} with $d:=\mathrm{deg}(Q)$. Set $\mathcal{I}_n := \binom{[n]}{\leqslant 2}$, $\mathcal{I}_m := \binom{[m]}{\leqslant 2}$ and\footnote{Here, we identify the vector $\mathrm{const}(V) \in \mathbb{F}_2^{\mathcal{I}_n}$ with a subset of $\mathcal{I}_n$.}
\begin{equation} \label{e5.4}
\alpha' :=\alpha + \sum_{F\in\binom{\mathrm{const}(V)}{\leqslant d}} \lambda_F \mod 2^k.
\end{equation}
Moreover, for any $T\in\binom{\mathcal{I}_m}{\leqslant d}$, set
\begin{equation}
\label{e5.5} \lambda'_T := \sum_{F\in \Hcal_T} \lambda_F \mod 2^k,
\end{equation}
where $\Hcal_T$ is the ``hitting" set defined by
\begin{align} \label{e5.6}
\Hcal_T:=\bigg\{ F\in\binom{\mathcal{I}_n}{\leqslant d} \colon & F\not\subseteq\mathrm{const}(V),
F\subseteq\mathrm{const}(V)\cup\bigcup_{q\in \mathcal{I}_m}\mathrm{var}(V)_q, \\
& \ \ \ \ \ \ \ \ \ \ \ \ \ \ \text{and } T=\{q\in \mathcal{I}_m\colon \mathrm{var}(V)_q\cap F\neq\emptyset\}\bigg\}. \nonumber
\end{align}
Then observe that with these choices we have, for any $y\in\mathbb{F}_2^{\mathcal{I}_m}$,
\begin{equation} \label{e5.7}
Q\circ e_V (y) = \alpha' + \sum_{T\in\binom{\mathcal{I}_m}{\mik d}} \lambda'_T \prod_{q\in T} y(q) \mod 2^k;
\end{equation}
therefore, $Q\circ e_V$ is a $k$-integer polynomial with $\deg(Q \circ e_V) \mik \deg(Q)$.
\end{proof}

An important ingredient of the proof of Theorem \ref{thm_part_int_pol} is Ramsey's classical theorem \cite{Ra30}.

\begin{thm}[Ramsey theorem] \label{thm-Ramsey-main}
For every triple $\ell,m,k$ of positive integers with $m \meg \ell\meg 2$, there exists a positive integer $n_0=n_0(\ell,m,k)$ with the following property. If $n\geqslant n_0$ is an integer and $c\colon \binom{[n]}{\ell}\to [k]$, then there exists $X\in\binom{[n]}{m}$ such
that $c$ is constant on $\binom{X}{\ell}$. The least integer $n_0$ with this property is denoted by $\mathrm{R}(\ell,m,k)$.
\end{thm}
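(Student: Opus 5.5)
The statement is Ramsey's classical theorem for $\ell$-uniform hypergraphs with $k$ colors. I would prove it by induction on $\ell$, with all $m$ and $k$ handled simultaneously at each level.

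\emph{Base case.} It is convenient to start at $\ell=1$, where the statement is just the pigeonhole principle. Any $c\colon [n]\to[k]$ with $n\meg k(m-1)+1$ has a color class of size at least $m$. This allows $\ell=2$ to be treated uniformly with the higher cases.

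\emph{Inductive step.} Assume the result holds for $\ell-1$ (with every $m$ and $k$), and fix $c\colon\binom{[n]}{\ell}\to[k]$. I will build, by the usual greedy ``stepping'' construction, a sequence $a_1<a_2<\dots<a_N$ together with nested sets $[n]\supseteq S_0\supseteq S_1\supseteq\dots$. These are chosen so that for each $j$, the color $c(\{a_{i_1},\dots,a_{i_{\ell-1}},a_j\})$ with $i_1<\dots<i_{\ell-1}<j$ depends only on $(a_{i_1},\dots,a_{i_{\ell-1}})$, and not on $a_j$.

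To do this, set $a_1:=\min S_0$. Given $a_1,\dots,a_t$ and $S_{t-1}$, let $a_t:=\min S_{t-1}$ and consider the remaining elements $S_{t-1}\setminus\{a_t\}$. Each such element $x$ receives a ``type'', namely the coloring $F\mapsto c(F\cup\{x\})$ on $\binom{\{a_1,\dots,a_t\}}{\ell-1}$. There are at most $k^{\binom{t}{\ell-1}}$ types, so by pigeonhole there is a set $S_t$ of size at least $(|S_{t-1}|-1)/k^{\binom{t}{\ell-1}}$ on which all elements have the same type. Hence $n$ only needs to be large enough, in terms of $N$, $k$ and $\ell$, for this process to run for $N$ steps; this is a finite iterated bound. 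Any later element $a_j$ lies in $S_{t}$ for all $t<j$, which gives the stated independence.

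\emph{Conclusion.} Define the induced coloring $c'\colon\binom{\{a_1,\dots,a_{N-1}\}}{\ell-1}\to[k]$ by letting $c'(F)$ be the common value of $c(F\cup\{a_j\})$ for $a_j>\max F$. Choose $N-1\meg \mathrm{R}(\ell-1,m,k)$. The induction hypothesis then yields $X\subseteq\{a_1,\dots,a_{N-1}\}$ with $|X|=m$ on which $c'$ is constant. Every $\ell$-subset $E$ of $X$ has the form $F\cup\{\max E\}$ with $F\in\binom{X}{\ell-1}$, so $c(E)=c'(F)$, and $c$ is constant on $\binom{X}{\ell}$.

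The only delicate point is the bookkeeping in the stepping construction: ensuring the sets $S_t$ stay nonempty long enough, and that the types are defined relative to all earlier $a_i$. Since only existence (not bounds) is required, this reduces to choosing $n_0$ as an explicit iterated expression, so there is no real obstacle. Alternatively, one may simply cite \cite{Ra30}.
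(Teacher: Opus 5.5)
Your proof is correct. The paper gives no proof of this statement: it quotes Ramsey's theorem as a classical black box, citing Ramsey \cite{Ra30}, and uses it only through Corollary \ref{cor_from_Ramsey} to canonize the coefficients of integer polynomials. So your argument is a self-contained substitute for a citation rather than an alternative to an argument in the paper.

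What you give is the standard Erd\H{o}s--Rado proof via end-homogeneous sequences, and the key points check out:
\begin{enumerate}
\item[(a)] Since $a_j=\min S_{j-1}$ and $S_{j-1}\subseteq S_t$ for $t<j$, the color $c(F\cup\{a_j\})$ with $F\subseteq\{a_1,\dots,a_t\}$ is indeed independent of $j>t$.
\item[(b)] The induced coloring $c'$ is well defined on $\binom{\{a_1,\dots,a_{N-1}\}}{\ell-1}$, because $a_N$ always lies above $\max F$.
\item[(c)] The induction hypothesis for uniformity $\ell-1$ applies with the same $m$, since $m\meg \ell>\ell-1$. Your pigeonhole base case at $\ell=1$ takes care of $\ell=2$.
\end{enumerate}

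There is one harmless indexing slip: the recursive step should read ``given $a_1,\dots,a_{t-1}$ and $S_{t-1}$, let $a_t:=\min S_{t-1}$.'' You can also make the threshold fully explicit. Take $N:=\mathrm{R}(\ell-1,m,k)+1$, set $s_{N-1}:=1$ and $s_{t-1}:=k^{\binom{t}{\ell-1}}s_t+1$ for $t=N-1,\dots,1$, and let $n_0:=s_0$. Then pigeonhole inductively gives $|S_t|\meg s_t$, so $S_{N-1}\neq\emptyset$ and $a_N$ is defined.

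Compared with the bare citation, your route yields explicit iterated-exponential bounds for $\mathrm{R}(\ell,m,k)$. The paper does not need these, since only the existence of $\mathrm{R}(2d,r+2d-1,2^{kt_d})$ is used.
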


Ramsey theorem, Theorem \ref{thm-Ramsey-main}, will be used to ``canonize" the coefficients of a given integer polynomial, in the sense of the following definition.

\begin{defn}[Types and canonical collections] \label{d5.7}
Let $n\meg 2$ be an integer.
\begin{enumerate}
\item[(i)] For any nonempty set $F$ of nonempty subsets of $[n]$, we define the \emph{type $\tau(F)$ of $F$} as follows. Let\, $\cup F$ denote the union of all members of $F$, set $\ell:=|\cup F|$, write $\cup F$ in increasing order as $\{u_1<\cdots<u_\ell\}$, and set
\begin{equation} \label{e5.8}
\tau(F) := \big\{ S\subseteq [\ell] \colon \{u_i:i\in S\}\in F\big\}.
\end{equation}
\item[(ii)] Let $d,k$ be positive integers, and let $\boldsymbol{\lambda}=\big\langle \lambda_F\colon F\in \binom{\binom{[n]}{\mik 2}}{\mik d} \big\rangle$ be a collection of elements of~$\mathbb{Z}_{2^k}$. Also let $X$ be a nonempty subset of\, $[n]$. We say that $\boldsymbol{\lambda}$ is \emph{canonical in $X$} if $\lambda_{F_1}=\lambda_{F_2}$ for any pair $F_1, F_2\in\binom{\binom{X}{\mik 2}}{\mik d}$ with $\tau(F_1)=\tau(F_2)$.
\end{enumerate}
\end{defn}

We have the following corollary.

\begin{cor} \label{cor_from_Ramsey}
Let $n,d,r,k$ be positive integers with
\begin{equation} \label{e5.9}
n\geqslant \mathrm{R}\Big(2d,r+2d-1,2^{kt_d}\Big),
\end{equation}
where $t_d$ denotes the number of all possible types of elements of ${{\mathbb{N} \choose {\leqslant 2}} \choose {\leqslant d}}$. Then, for any collection $\boldsymbol{\lambda}=\big\langle \lambda_F\colon F\in \binom{\binom{[n]}{\mik 2}}{\mik d} \big\rangle$ of elements of $\mathbb{Z}_{2^k}$, there exists $X\in\binom{[n]}{r}$ such that $\boldsymbol{\lambda}$ is canonical in $X$.
\end{cor}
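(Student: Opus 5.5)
This is a direct application of Ramsey's theorem, Theorem \ref{thm-Ramsey-main}, to a suitably defined coloring of $\binom{[n]}{2d}$. The key observation is that every $F\in\binom{\binom{[n]}{\mik 2}}{\mik d}$ satisfies $|\cup F|\mik 2d$. We therefore color a $2d$-element set $Y=\{y_1<\cdots<y_{2d}\}\subseteq[n]$ by recording all coefficients $\lambda_F$ with $\cup F\subseteq Y$, encoded relative to the positions of $Y$.

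Concretely, let $\mathcal{T}$ be the set of pairs $(\tau,\sigma)$, where $\tau$ is a possible type and $\sigma\subseteq[2d]$ is a set of size $\ell(\tau):=|\cup\tau|$. Here $\sigma$ indicates which elements of $Y$ form $\cup F$. Define
\[ c(Y):=\big\langle \lambda_{F(Y,\tau,\sigma)}\colon (\tau,\sigma)\in\mathcal{T}\big\rangle, \]
where $F(Y,\tau,\sigma)$ is the unique $F$ with $\cup F=\{y_j\colon j\in\sigma\}$ and $\tau(F)=\tau$.

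This introduces a mismatch with the number of colors $2^{kt_d}$ in \eqref{e5.9}. The coloring above uses $2^{k|\mathcal{T}|}$ colors, and $|\mathcal{T}|$ exceeds $t_d$. To get down to $2^{kt_d}$ colors, I would instead keep only one representative $\sigma$ per type: for $\tau$ with $\ell=\ell(\tau)$, take $\sigma_\tau:=[\ell]$, i.e.\ the first $\ell$ elements of $Y$. Then $c(Y):=\langle\lambda_{F(Y,\tau,[\ell(\tau)])}\colon \tau\rangle$ takes values in $\mathbb{Z}_{2^k}^{t_d}$, which has $2^{kt_d}$ elements. This is exactly why the homogeneous set must have size $r+2d-1$: we lose the top $2d-1$ elements.

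By \eqref{e5.9} and Theorem \ref{thm-Ramsey-main}, there is a set $Z=\{z_1<\dots<z_{r+2d-1}\}$ such that $c$ is constant on $\binom{Z}{2d}$. Set $X:=\{z_1,\dots,z_r\}$.

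Now take $F_1,F_2\in\binom{\binom{X}{\mik2}}{\mik d}$ with $\tau(F_1)=\tau(F_2)=\tau$, and put $\ell=\ell(\tau)\mik 2d$. The sets $\cup F_i$ are $\ell$-subsets of $X$. Extend each to a $2d$-set $Y_i\subseteq Z$ by adding the $2d-\ell$ largest elements $z_{r+\ell},\dots,z_{r+2d-1}$. These added elements lie above $X$, and there are exactly $2d-\ell\mik 2d-1$ of them available, since $\ell\meg1$; indeed $\ell\meg 1$ because members of $F$ are nonempty and $F\neq\emptyset$.

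By construction $\cup F_i$ is the set of the first $\ell$ elements of $Y_i$, so $F_i=F(Y_i,\tau,[\ell])$. Hence $\lambda_{F_i}$ is the $\tau$-coordinate of $c(Y_i)$. Since $c(Y_1)=c(Y_2)$, we get $\lambda_{F_1}=\lambda_{F_2}$, so $\boldsymbol{\lambda}$ is canonical in $X$.

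The only delicate point is the bookkeeping needed to match exactly the parameters $2d$, $r+2d-1$ and $2^{kt_d}$, which the padding-from-above trick handles. One also has to check that $F(Y,\tau,[\ell])$ is well defined, i.e.\ that $\tau$ together with an increasing enumeration of $\ell$ points determines $F$. This is immediate from \eqref{e5.8}. Note that $2d\mik r+2d-1$ holds, as Theorem \ref{thm-Ramsey-main} requires, and $2d\meg2$.
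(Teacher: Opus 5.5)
Your proof is correct, and it is exactly the routine Ramsey argument that the paper compresses into ``it follows immediately by Theorem~\ref{thm-Ramsey-main}''. Coloring each $2d$-set by the $t_d$ coefficients attached to the sets $F$ whose union is an initial segment of it, and then padding $\cup F$ from above inside a homogeneous set of size $r+2d-1$, accounts precisely for the parameters $2d$, $r+2d-1$ and $2^{kt_d}$ in \eqref{e5.9}.
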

\begin{proof}
It follows immediately by Theorem \ref{thm-Ramsey-main}.
\end{proof}

The following lemma shows that integer polynomials whose coefficients are canonical in a set $X$ must have lower degree when restricted on appropriate $\mathrm{HJ}$-subspaces.

\begin{lem}[Degree-lowering on $\mathrm{HJ}$-subspaces] \label{lem_reduce_degree}
Let $n,k$ be positive integers, and let $X$ be a nonempty subset of $[n]$. Let $V$ be a block $\mathrm{HJ}$-subspace of $\mathbb{F}_2^{\binom{[n]}{\mik 2}}$, set $m:=\dim(V)$, and let $e_V\colon \mathbb{F}_2^{\binom{[m]}{\mik 2}}\to V$ be its associated embedding. (See Definition \ref{d2.2}.) Also let $Q\colon \mathbb{F}_2^{\binom{[n]}{\mik 2}} \to \mathbb{Z}_{2^k}$ be
a $k$-integer polynomial, set $d:=\mathrm{deg}(Q)$, and let $\boldsymbol{\lambda}=\big\langle \lambda_F\colon F\in
\binom{\binom{[n]}{\mik 2}}{\mik d}\big\rangle$ denote the coefficients of $Q$ written in the form \eqref{eq3.01}. Assume that $d\meg 1$ and that the following hold true.
\begin{enumerate}
\item[(i)] The collection $\boldsymbol{\lambda}$ is canonical in $X$.
\item[(ii)] Setting $\mathrm{wild}(V):=(I_i)_{i=1}^m$, then, for any $i\in [m]$, the set $I_i$ is a subset of $X$ with cardinality $(d+1)!\, 2^k$.
\end{enumerate}
Then, $\mathrm{deg}(Q\circ e_V)<\mathrm{deg}(Q)$.
\end{lem}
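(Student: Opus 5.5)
The plan is to compute the top-degree coefficients of $Q\circ e_V$ explicitly, using the formula \eqref{e5.7} from the proof of Lemma \ref{l5.5}, and to show that every coefficient $\lambda'_T$ with $|T|=d$ vanishes mod $2^k$. By Lemma \ref{l5.5} we already have $\deg(Q\circ e_V)\mik d$. So this suffices.

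\emph{Step 1: describe $\Hcal_T$ for $|T|=d$.} Fix $T\in\binom{\Ical_m}{d}$ and $F\in\Hcal_T$. Then $F$ meets each of the $d$ pairwise disjoint sets $\mathrm{var}(V)_q$, $q\in T$. Since $|F|\mik d$ and $F\subseteq \mathrm{const}(V)\cup\bigcup_q\mathrm{var}(V)_q$, the set $F$ consists of exactly one element $p_q\in\mathrm{var}(V)_q$ for each $q\in T$, and of nothing else.

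In particular $F\subseteq\binom{I_1\cup\dots\cup I_m}{\mik 2}\subseteq \binom{X}{\mik 2}$, using (ii). Hence, by (i), $\lambda_F$ depends only on the type $\tau(F)$. Thus $\lambda'_T=\sum_{F\in\Hcal_T}\lambda_F$ equals $\sum_\tau \lambda_\tau\cdot N_\tau$, where $N_\tau$ is the number of $F\in\Hcal_T$ of type $\tau$. It suffices to prove $2^k\mid N_\tau$ for every $\tau$.

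\emph{Step 2: count $N_\tau$ using the block structure.} Let $U:=\bigcup T\subseteq[m]$; it is nonempty since $d\meg1$. For $F\in\Hcal_T$ and $i\in U$, let $S_i:=(\cup F)\cap I_i$ and $a_i:=|S_i|\meg 1$. Since $V$ is block, the increasing enumeration of $\cup F$ lists first $S_{i}$ for the smallest $i\in U$, then the next, and so on.

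I would refine the partition further, grouping $F$ by the pair $\big(\tau(F),(a_i)_{i\in U}\big)$. Within one class, $F$ is uniquely determined by the choice of subsets $S_i\in\binom{I_i}{a_i}$, $i\in U$. Conversely, every such choice produces a member of the class, because the type prescribes which relative positions form the pairs or loops $p_q$, and the membership $p_q\in \mathrm{var}(V)_q$ is then automatic. So each class has size $\prod_{i\in U}\binom{N}{a_i}$ with $N:=(d+1)!\,2^k$.

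\emph{Step 3: the arithmetic.} This is the step where the specific choice $|I_i|=(d+1)!\,2^k$ is used; checking the bound on $a_i$ is the main point to get right.

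First I bound $a_i$. The vertices of $S_i$ come from elements $p_q$ with $i\in q$. If $q=\{i\}$, then $p_q$ contributes at most $2$ vertices, and there is at most one such $q$. If $q=\{i,j\}$, then $p_q$ contributes exactly $1$ vertex in $I_i$. Hence $a_i\mik 2+(d-1)=d+1$ when $\{i\}\in T$, and $a_i\mik d$ otherwise.

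Now use $\binom{N}{a}=\frac{N}{a}\binom{N-1}{a-1}$. Since $1\mik a\mik d+1$, we have $a\mid (d+1)!$, so $N/a$ is an integer divisible by $2^k$. Thus $2^k$ divides each factor $\binom{N}{a_i}$, and hence each class size; summing over classes gives $2^k\mid N_\tau$.

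Therefore $\lambda'_T\equiv 0 \pmod{2^k}$ for all $|T|=d$, and \eqref{e5.7} exhibits $Q\circ e_V$ with degree at most $d-1$.
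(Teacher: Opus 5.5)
Your proposal is correct and follows essentially the same route as the paper: you show that every $\lambda'_T$ with $|T|=d$ vanishes mod $2^k$ by splitting $\Hcal_T$ into classes on which $\lambda_F$ is constant by canonicity. Your classes, indexed by $\big(\tau(F),(a_i)_{i\in U}\big)$, coincide with the paper's classes of fixed distributed type $\mathrm{dt}(F)$, and each has size $\prod_{i\in U}\binom{(d+1)!\,2^k}{a_i}$ with $1\le a_i\le d+1$; your identity $\binom{N}{a}=\frac{N}{a}\binom{N-1}{a-1}$ simply spells out the divisibility by $2^k$ that the paper leaves implicit.
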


\begin{proof}
As in the proof of Lemma \ref{l5.5}, set $\mathcal{I}_n := \binom{[n]}{\leqslant 2}$ and $\mathcal{I}_m := \binom{[m]}{\leqslant 2}$; also let $\alpha'$ be as in \eqref{e5.4}, and let $\boldsymbol{\lambda}'=\big\langle \lambda'_T\colon T\in
\binom{\Ical_m}{\mik d}\big\rangle$ be as in \eqref{e5.5}. Recall that, for any $y\in \Ftwo{\Ical_m}$,
\begin{equation} \label{e5.10}
Q\circ e_V (y) = \alpha' + \sum_{T\in\binom{\mathcal{I}_m}{\mik d}} \lambda'_T \prod_{q\in T}y(q) \mod 2^k.
\end{equation}
Thus, it is enough to show that $\lambda'_T=0\mod 2^k$ for every $T\in\binom{\mathcal{I}_m}{d}$.

So, fix $T\in\binom{\mathcal{I}_m}{d}$, and let $\Hcal_T$ be as in \eqref{e5.6}. Then, for any $F\in\mathcal{H}_T$, we necessarily have  $|F|=d$ and $F\subseteq \bigcup_{q\in \Ical_m} \mathrm{var}(V)_q$, that yields that $\cup F\subseteq \bigcup_{i=1}^m I_i\subseteq X$.
Since $\boldsymbol{\lambda}$ is canonical in~$X$, we obtain that
\begin{equation} \label{eq3.02}
\lambda_{F_1} = \lambda_{F_2}
\end{equation}
for every pair $F_1,F_2\in\mathcal{H}_T$ with $\mathrm{\tau}(F_1) = \mathrm{\tau}(F_2)$. Let $F\in \mathcal{H}_T$ and recall that $|F|=d$; we define
\begin{enumerate}
\item[$\bullet$] the \emph{distributed local position $\mathrm{dlp}(F)=\big(\mathrm{dlp}(F)_1,\dots,\mathrm{dlp}(F)_m\big)$ of $F$},
\item[$\bullet$] the \emph{distributed signature $\mathrm{dsg}(F)=\big(\mathrm{dsg}(F)_1,\dots,\mathrm{dsg}(F)_m\big)$ of $F$}, and
\item[$\bullet$] the \emph{distributed type $\mathrm{dt}(F)=\big(\mathrm{dt}(F)_1,\dots,\mathrm{dt}(F)_m\big)$ of $F$},
\end{enumerate}
as follows. First, for every $i\in [m]$, set
\begin{equation} \label{e5.12}
\mathrm{dlp}(F)_i := I_i\cap (\cup F) \ \ \ \text{ and } \ \ \ \mathrm{dsg}(F)_i := |\mathrm{dlp}(F)_i|.
\end{equation}
Next, write $F$ in increasing lexicographical order as $(p_j)_{j=1}^d$. For every $i\in [m]$, write $\mathrm{dlp}(F)_i$ in increasing order as $\big\{u^i_1<\dots <u^i_{\mathrm{dsg}(F)_i}\big\}$, and set
\begin{equation} \label{e5.13}
\mathrm{dt}(F)_i := \big(\{ r\in [\mathrm{dsg}(F)_i] \colon u^i_r \in p_j \}\big)_{j=1}^d.
\end{equation}
Finally, let $\mathrm{DT} := \{\mathrm{dt}(F) \colon F\in \Hcal_T\}$ denote the set of all distributed types of members of $\Hcal_T$. Notice that $\mathrm{dlp}(F)$ and $\mathrm{dt}(F)$ uniquely determine $F$. This yields that, for every $F\in\mathcal{H}_T$, the sets
$\{ F'\in \Hcal_T\colon \mathrm{dt}(F')=\mathrm{dt}(F)\}$ and $\big\{ A\subseteq \bigcup_{i=1}^m I_i \colon |A\cap I_i |= \mathrm{dsg}(F)_i \text{ for all } i\in [m]\big\}$ have equal cardinalities. On the other hand, since $|T|=d$, for every $F\in\mathcal{H}_T$ and every $i\in [m]$, there is at most one $p\in F$ with $|I_i\cap p|=2$ and, therefore, $\mathrm{dsg}(F)_i\leqslant d+1$. Hence, by part (ii) of the lemma, for every $F\in\mathcal{H}_T$, the cardinality of the set $\big\{ A\subseteq \bigcup_{i=1}^m I_i \colon |A\cap I_i |= \mathrm{dsg}(F)_i \text{ for all } i\in [m]\big\}$ is a multiple of $2^k$. This yields that, for any distributed type $\tau\in\mathrm{DT}$,
\begin{equation} \label{eq3.03}
\big|\big\{ F\in\mathcal{H}_T \colon \mathrm{dt}(F) =\tau\big\}\big| = 0 \mod 2^k.
\end{equation}
Finally, note that if $F_1,F_2\in\mathcal{H}_T$ with $\mathrm{dt}(F_1) = \mathrm{dt}(F_2) $, then $\mathrm{\tau}(F_1) = \mathrm{\tau}(F_2)$ and, consequently, by \eqref{eq3.02},
\begin{equation} \label{eq3.04}
\lambda_{F_1} = \lambda_{F_2}.
\end{equation}
We conclude that
\begin{equation} \label{e5.16}
\lambda'_T =\sum_{F\in\mathcal{H}_T}\lambda_F =
\sum_{\tau\in\mathrm{DT}}\sum_{\substack{F\in \mathcal{H}_T\\ \mathrm{dt}(F)=\tau}}
\lambda_F\stackrel{\eqref{eq3.03},\eqref{eq3.04}}{=}0\mod 2^k. \qedhere
\end{equation}
\end{proof}

The following lemma is the last step of the proof of Theorem \ref{thm_part_int_pol}.

\begin{lem} \label{lem_ind_step}
Let $\eta>0$, and let $m,k,d$ be positive integers. Then there exists a positive integer $n_1 = n_1(\eta, m,k,d)$ with the following property. Let $n\geqslant n_1$ be an integer, let $Q\colon \mathbb{F}_2^{\binom{[n]}{\mik 2}}\to \mathbb{Z}_{2^k}$ be a
$k$-integer polynomial of degree $d$. Then there exists a collection $\mathcal{V}$ of pairwise disjoint $m$-dimensional block\, $\mathrm{HJ}$-subspaces of\, $\mathbb{F}_2^{\binom{[n]}{\leqslant2}}$ such that
\begin{enumerate}
\item[(i)] $\prob\Big[ \mathbb{F}_2^{\binom{[n]}{\mik 2}}\setminus (\cup\mathcal{V})\Big] \mik \eta$, and
\item[(ii)] for every $V\in\mathcal{V}$, we have $\mathrm{deg}(Q\circ e_V)<\mathrm{deg}(Q)$.
\end{enumerate}
\end{lem}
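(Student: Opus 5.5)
The plan is to combine Corollary \ref{cor_from_Ramsey} with Lemma \ref{lem_reduce_degree}. The only real work is a covering argument producing a near-partition of $\mathbb{F}_2^{\binom{[n]}{\mik 2}}$ into block $\mathrm{HJ}$-subspaces whose wildcard sets all lie inside one fixed canonical set $X$.

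\emph{Parameters.} Set $L:=(d+1)!\,2^k$. Then fix the following:
\begin{enumerate}
\item[$\bullet$] $s:=\binom{mL}{2}+mL$, the number of coordinates in $\binom{J}{\mik 2}$ when $|J|=mL$;
\item[$\bullet$] $p:=2^{\binom{m}{2}+m-s}>0$;
\item[$\bullet$] $N$ with $(1-p)^N\mik \eta$;
\item[$\bullet$] $r:=NmL$;
\item[$\bullet$] $n_1:=\mathrm{R}(2d,r+2d-1,2^{kt_d})$.
\end{enumerate}
Given $Q$ of degree $d\meg 1$ with coefficients $\boldsymbol{\lambda}$, Corollary \ref{cor_from_Ramsey} yields $X\in\binom{[n]}{r}$ in which $\boldsymbol{\lambda}$ is canonical. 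Write $X$ in increasing order and cut it into $Nm$ consecutive intervals of length $L$. Group these into $N$ successive families $\boldsymbol{J}^t=(J^t_1,\dots,J^t_m)$, $t\in[N]$. Each $\boldsymbol{J}^t$ is a block sequence of wildcard sets satisfying (ii) of Lemma \ref{lem_reduce_degree}. Let $U_t:=J^t_1\cup\dots\cup J^t_m$; the sets $\binom{U_t}{\mik 2}$, $t\in[N]$, are pairwise disjoint coordinate sets.

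\emph{Covering.} The variable sets $\mathrm{var}_q$, $q\in\binom{[m]}{\mik 2}$, partition $\binom{U_t}{\mik 2}$. Let $A_t$ be the event that $x$ is constant on each of these variable sets. Then $x\in A_t$ if and only if $x$ lies in the $\mathrm{HJ}$-subspace $V_t(x)$ with wildcard sets $\boldsymbol{J}^t$ and constant part $x$ restricted to the complement of $\binom{U_t}{\mik 2}$. We have $\prob[A_t]=p$, and the events $A_t$ are independent because they depend on disjoint coordinate sets. Let $B_t:=A_t\setminus\bigcup_{s<t}A_s$. Since $A_s$ for $s\neq t$ depends only on coordinates in $\binom{U_s}{\mik 2}$, which belong to the constant part of $V_t(x)$, membership in $A_s$ is constant on $V_t(x)$. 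Hence $B_t$ is a disjoint union of subspaces of the form $V_t(x)$, and we let $\mathcal{V}$ be the collection of all these subspaces over $t\in[N]$. These are pairwise disjoint, $m$-dimensional and block, and they cover $\bigcup_t A_t$. The uncovered set therefore has probability $(1-p)^N\mik\eta$, which gives (i).

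\emph{Degree drop.} Each $V\in\mathcal{V}$ has wildcard sets inside $X$ of cardinality $(d+1)!\,2^k$, and $\boldsymbol{\lambda}$ is canonical in $X$. Lemma \ref{lem_reduce_degree} therefore gives $\deg(Q\circ e_V)<\deg(Q)$, which is (ii). The main point to check carefully is the disjointness and well-definedness in the covering step: the constant part of $V_t(x)$ must be zero on $\binom{U_t}{\mik 2}$, and the events $A_s$, $s<t$, must be unaffected by the variable coordinates. Both hold because the $U_t$ are disjoint, so every pair meeting two different $U_t$'s, or meeting $U_t$ and its complement, lies in the constant part.
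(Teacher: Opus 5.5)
Your proposal is correct and follows the paper's proof essentially step for step. Your $p$ and $N$ are the paper's $\eta_1$ and $\ell$, and $r$ and $n_1$ are the same. The canonical set $X$ from Corollary \ref{cor_from_Ramsey} is cut in the same way into $N$ successive blocks of $m$ consecutive intervals of length $(d+1)!\,2^k$. You use the same ``first block on which $x$ is constant on every variable set'' decomposition: your $B_t$ is exactly the paper's $\cup\mathcal{V}_t$. The argument then concludes with Lemma \ref{lem_reduce_degree}, as in the paper; your explicit check that membership in $A_s$ is constant on $V_t(x)$ is the justification the paper leaves implicit in its properties (P2)--(P4).
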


\begin{proof}
We start by introducing some numerical parameters. First, set
\begin{gather}
\label{e5.17} \eta_1 :=
2^{\frac{1}{2}\big(m(m+1)-2^k(d+1)!m\big(2^k(d+1)!m+1\big)\big)}, \\
\label{e5.18} \ell:=
\Big\lfloor \frac{\log \eta}{\log(1-\eta_1)} \Big\rfloor+1.
\end{gather}
Notice, by the choice of $\ell$ in \eqref{e5.18}, we have
\begin{equation} \label{eq3.05}
(1-\eta_1)^\ell <\eta.
\end{equation}
We also set
\begin{gather}
\label{e5.20} r: =2^k\,(d+1)!\,m\,\ell, \\
\label{e5.21} n_1:= \mathrm{R}\Big(2d,r+2d-1,2^{k t_d}\Big);
\end{gather}
here, as in Corollary \ref{cor_from_Ramsey}, $t_d$ denotes the number of all possible types of elements of ${{\mathbb{N} \choose {\leqslant 2}} \choose {\leqslant d}}$. We will show that $n_1$ is as desired.

To this end, let $n, Q$ be as in the statement of the lemma, and let $\boldsymbol{\lambda}=\big\langle \lambda_F\colon F\in
\binom{\binom{[n]}{\mik 2}}{\mik d}\big\rangle$ denote the coefficients of $Q$ written in the form \eqref{eq3.01}. By Corollary
\ref{cor_from_Ramsey}, there exists $X\in\binom{[n]}{r}$ such that $\boldsymbol{\lambda}$ is canonical in $X$. We are going to define several combinatorial objects related to the set $X$ that will enable us to introduce the desired collection $\mathcal{V}$.

\subsection*{Step 1}

Let $(X_j)_{j=1}^\ell$ be the unique finite sequence of successive subsets of $X$ that satisfies $|X_j|=2^k\,(d+1)!\,m$ for every $j\in [\ell]$; moreover, for every $j\in[\ell]$, let $(I^j_i)_{i=1}^m$ be the unique finite sequence of successive subsets of $X_j$ such that $|I^j_i|=2^k\, (d+1)!$ for every $i\in [m]$. Thus,
\begin{equation} \label{eq3.06}
X= X_1\cup\cdots\cup X_\ell \ \ \ \text{ and } \ \ \ X_j=I^j_1\cup\cdots\cup I^j_m \text{ for all } j\in [\ell].
\end{equation}
Moreover, for every $j\in[\ell]$ and every $q\in \binom{[m]}{\mik 2}$, set
\begin{equation} \label{e5.23}
\mathrm{var}^j_q := \bigg\{p\in\binom{\bigcup_{i\in q}I^j_i}{\leqslant2}\colon p\cap I^j_i\neq\emptyset \text{ for all } i\in q\bigg\}.
\end{equation}

\subsection*{Step 2}

Next, for every $j\in [\ell]$, set
\begin{gather}
\label{e5.24} \mathcal{X}_j^\mathrm{con} :=
\bigg\{ x\in\mathbb{F}_2^{\binom{X_j}{\mik 2}} \colon x \text{ is constant on }\mathrm{var}_q^j
\text{ for all } q\in \binom{[m]}{\mik 2}\bigg\}, \\
\label{e5.25} \mathcal{X}_j^\mathrm{ncon} := \mathbb{F}_2^{\binom{X_j}{\mik 2}}\setminus \mathcal{X}_j^\mathrm{con};
\end{gather}
notice that
\begin{equation} \label{eq3.07}
\frac{|\mathcal{X}_j^\mathrm{con}|}{\Big|\mathbb{F}_2^{\binom{X_j}{\mik 2}}\Big|} =\eta_1
\ \ \  \text{ and } \ \ \
\frac{|\mathcal{X}_j^\mathrm{ncon}|}{\Big|\mathbb{F}_2^{\binom{X_j}{\mik 2}}\Big|} =1-\eta_1.
\end{equation}

\subsection*{Step 3}

Now, for every $z\in \mathbb{F}_2^{\binom{[n]}{\mik 2}\setminus\binom{X_1}{\mik 2}}$, set
\begin{equation} \label{e5.27}
V^1_z := \big\{ x\cup z \colon x\in \mathcal{X}_j^\mathrm{con}\big\},
\end{equation}
and define
\begin{equation} \label{e5.28}
\mathcal{V}_1 := \bigg\{ V^1_z \colon z\in \mathbb{F}_2^{\binom{[n]}{\mik 2}\setminus\binom{X_1}{\mik 2}}\bigg\}.
\end{equation}
Respectively, for every $j\in[\ell]$ with $j\geqslant2$, every $y_1\in \mathcal{X}_1^\mathrm{ncon},\dots,y_{j-1} \in
\mathcal{X}_{j-1}^\mathrm{ncon}$ and every $z\in \mathbb{F}_2^{\binom{[n]}{\mik 2}\setminus
\big( \binom{X_1}{\mik 2}\cup \cdots\cup \binom{X_j}{\mik 2}\big)}$, set
\begin{equation} \label{e5.29}
V^j_{y_1,\dots,y_{j-1},z} := \bigg\{ x\cup y_1\cup \cdots \cup y_{j-1}\cup z \colon  x\in \mathcal{X}_j^\mathrm{con}\bigg\},
\end{equation}
and define
\begin{equation} \label{e5.30}
\mathcal{V}_j := \bigg \{ V^j_{y_1,\dots,y_{j-1},z} \colon
y_1\in \mathcal{X}_1^\mathrm{ncon},\dots ,y_{j-1} \in \mathcal{X}_{j-1}^\mathrm{ncon},
z\in \mathbb{F}_2^{\binom{[n]}{\mik 2}\setminus \big( \binom{X_1}{\mik 2}\cup \cdots\cup \binom{X_j}{\mik 2}\big)} \bigg\}.
\end{equation}

\subsection*{Step 4: properties}

The following properties are guaranteed by the above construction.
\begin{enumerate}
\item[(P1)] For every $j\in [\ell]$, the elements of $\mathcal{V}_j$ are $m$-dimensional block $\mathrm{HJ}$-subspaces.
\item[(P2)] For every $j\in [\ell]$, the subspaces in $\mathcal{V}_j$ are pairwise disjoint.
\item[(P3)] For every distinct $j,j'\in [\ell]$, we have that $(\cup \mathcal{V}_j)\cap(\cup \mathcal{V}_{j'})=\emptyset$.
\item[(P4)] \label{new-property} For every $j\in [\ell]$, by \eqref{eq3.07}, we have $\prob\big[ (\cup\mathcal{V}_j)\big] =\eta_1(1-\eta_1)^{j-1}$.
\item[(P5)] For every $j\in [\ell]$ and every $V\in\mathcal{V}_j$, we have that $\mathrm{wild}(V)=(I_i^j)_{i=1}^m$; also, recall that $I_i^j$ is a subset of $X$ of cardinality $2^k\, (d+1)!$ for every $i\in [m]$.
\end{enumerate}
Setting $\mathcal{V}:= \mathcal{V}_1\cup\cdots\cup \mathcal{V}_\ell$, the proof is completed by Lemma \ref{lem_reduce_degree} and observing that
\begin{equation} \label{e5.31}
\prob\Big[ \mathbb{F}_2^{\binom{[n]}{\mik 2}}\setminus (\cup\mathcal{V})\Big]
\stackrel{(\hyperref[new-property]{\mathrm{P4}})}{=}  (1-\eta_1)^\ell\stackrel{\eqref{eq3.05}}{\mik }\eta. \qedhere
\end{equation}
\end{proof}

We are finally in a position to complete the proof of Theorem \ref{thm_part_int_pol}.

\begin{proof}[Completion of the proof of Theorem \ref{thm_part_int_pol}]
It follows by repeated applications of Lemma \ref{lem_ind_step}.
\end{proof}

\section{Proof of Theorem \ref*{thm:MainCliques}} \label{sec6}

As in \eqref{e1.6}, let $\Ccal^\circ$ denote the set of nonempty cliques with all possible self-loops. Also recall that, for any positive integer $n$, by $\Delta_n^\circ(\Ccal^\circ)$ we denote the largest density of a $\Ccal^\circ$-$\mathrm{HJ}$-code $\Gcal\subseteq \Ftwo{\binom{[n]}{\mik 2}}$. (See Definition \ref{d1.10}.) By Fact \ref{f1.11}, the sequence $\big(\Delta_n^\circ(\Ccal^\circ)\big)$ is non-increasing.

\begin{lem} \label{lem_decreasing_stong_codes}
Let $n$ be a positive integer, and let\, $\mathcal{G}\subseteq\mathbb{F}_2^{\binom{[n]}{\leqslant2}}$ be a $\Ccal^\circ$-$\mathrm{HJ}$-code. Also let $V$ be a $\mathrm{HJ}$-subspace of\, $\mathbb{F}_2^{\binom{[n]}{\leqslant2}}$. Then, $\prob\big[ \Gcal\, \big|\, V\big] \mik \Delta^\circ_{\mathrm{dim}(V)}(\Ccal^\circ)$.
\end{lem}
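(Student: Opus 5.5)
The plan is to pull $\Gcal$ back along the $\mathrm{HJ}$-embedding $e_V\colon \Ftwo{\binom{[m]}{\mik 2}}\to V$, where $m:=\dim(V)$, and to show that the pullback $\Gcal':=e_V^{-1}(\Gcal)\subseteq \Ftwo{\binom{[m]}{\mik 2}}$ is again a $\Ccal^\circ$-$\mathrm{HJ}$-code. Granting this, the inequality is immediate. Indeed, $e_V$ is an affine bijection onto $V$, since the sets $b_q$, $q\in\binom{[m]}{\mik 2}$, are nonempty and pairwise disjoint. Hence $\prob[\Gcal\,|\,V]=|\Gcal\cap V|/|V|=\prob[\Gcal']$, and by definition \eqref{e1.11} this is at most $\Delta^\circ_m(\Ccal^\circ)$.

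To verify the code property, take $y_1,y_2\in\Gcal'$ with $y_1\supseteq y_2$, and set $G_i:=e_V(y_i)\in\Gcal$. Write $e_V=\mathrm{Id}_{\boldsymbol I}+c$, where $c$ vanishes on $\binom{I_1\cup\dots\cup I_m}{\mik 2}$. Because the $b_q$ are disjoint, $\mathrm{Id}_{\boldsymbol I}(y)$ is the disjoint union $\bigcup_{q\in y} b_q$. Since $c$ is supported off the union of the $b_q$, the graph $e_V(y)$ is the disjoint union of $c$ and this set. It follows that $G_1\supseteq G_2$ and
\[ G_1+G_2 = \mathrm{Id}_{\boldsymbol I}(y_1+y_2)=\bigcup_{q\in y_1+ y_2} b_q. \]
Since $\Gcal$ is an $\mathrm{HJ}$-code and $G_1 \supseteq G_2$, the graph $G_1+G_2$ is not isomorphic to any $K_r^\circ$.

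The remaining step, which is the only real content, is the contrapositive: if $w:=y_1+y_2$ is isomorphic to some $K_r^\circ$, then so is $\mathrm{Id}_{\boldsymbol I}(w)$. Suppose $w$ is the complete looped clique on a set $S\subseteq[m]$ with $|S|=r\meg 1$, so that $w=\binom{S}{\mik 2}$. Then
\[ \mathrm{Id}_{\boldsymbol I}(w)=\bigcup_{q\in\binom{S}{\mik 2}} b_q. \]
Every $p\in\binom{J}{\mik 2}$, where $J:=\bigcup_{i\in S}I_i$, lies in exactly one $b_q$, namely $q=\{i: p\cap I_i\neq\emptyset\}$, which is a nonempty subset of $S$ of size at most $2$. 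Hence $\mathrm{Id}_{\boldsymbol I}(w)=\binom{J}{\mik 2}$, the complete graph with all loops on the nonempty set $J$, which is isomorphic to $K^\circ_{|J|}\in\Ccal^\circ$. This is a contradiction, so $\Gcal'$ is a $\Ccal^\circ$-$\mathrm{HJ}$-code.

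I expect no serious obstacle here. The point needing care is the bookkeeping showing that the constant part cancels in $G_1+G_2$ and that the containment $y_1\supseteq y_2$ is preserved under $e_V$; both follow from the disjointness of the supports of $c$ and of the $b_q$.
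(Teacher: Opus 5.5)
Your proof is correct and follows the paper's argument: pull $\Gcal$ back to $\Gcal' := e_V^{-1}(\Gcal)$, note that $\Gcal'$ is again a $\Ccal^\circ$-$\mathrm{HJ}$-code, and conclude $\prob[\Gcal\,|\,V]=\prob[\Gcal']\mik\Delta^\circ_{\dim(V)}(\Ccal^\circ)$ because $e_V$ is a bijection onto $V$. The paper only asserts the code property of $\Gcal'$, while you verify it explicitly (the constant part cancels, containment is preserved, and $\mathrm{Id}_{\boldsymbol I}\big(\binom{S}{\mik 2}\big)=\binom{J}{\mik 2}$ with $J=\bigcup_{i\in S}I_i$).
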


\begin{proof}
Set $m:=\dim(V)$, let $e_V\colon \mathbb{F}_2^{\binom{[m]}{\mik 2}}\to V$ be its associated embedding (see Definition \ref{d2.2}), and set
$\Gcal':= e_V^{-1}(\Gcal)$. Then observe that $\mathcal{G}'$ is also a $\Ccal^\circ$-$\mathrm{HJ}$-code, and so,
\begin{equation} \label{e6.1}
\Delta^\circ_{m}(\Ccal^\circ) \meg \prob[\Gcal'\big] = \prob\big[ \Gcal\, \big|\, V\big]. \qedhere
\end{equation}
\end{proof}

The following lemma is the last ingredient of the proof of Theorem \ref{thm:MainCliques}.

\begin{lem} \label{lem:poly_cor}
Let $m,d $ be positive integers, and let $\eta > 0$. Also let $n \geqslant n_2(\eta,m ,d)$ be an integer,~where $n_2(\eta,m ,d)$ is as in Theorem \ref{thm_part_nclas_pol}, and assume that $\Delta_m^\circ(\Ccal^\circ)-\Delta_n^\circ(\Ccal^\circ)\leqslant\eta$. Fix a $\Ccal^\circ$-$\mathrm{HJ}$-code $\Gcal\subseteq\mathbb{F}_2^{\binom{[n]}{\mik 2}}$ with $\prob[\Gcal]=\Delta_n^\circ(\Ccal^\circ)$---that is, the code $\Gcal$ is extremal in the sense of Definition \ref{d1.10}---and a nonclassical polynomial $P\colon \mathbb{F}_2^{\binom{[n]}{\mik 2}}\to\mathbb{T}$ of degree at most~$d$. Then,
\begin{equation} \label{e6.2}
\Big|\mathbb{E} \Big[ \big( \mathbbm{1}_{\Gcal}-\prob[\Gcal] \big)\, \exp(2\pi i P)\Big]\Big| \leqslant 4\eta.
\end{equation}
\end{lem}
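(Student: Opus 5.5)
Apply Theorem \ref{thm_part_nclas_pol} to $P$ with parameters $\eta,m,d$. Since $n\meg n_2(\eta,m,d)$, this yields a collection $\mathcal{V}$ of pairwise disjoint $m$-dimensional block $\mathrm{HJ}$-subspaces such that the leftover set $R:=\mathbb{F}_2^{\binom{[n]}{\mik 2}}\setminus(\cup\mathcal{V})$ satisfies $\prob[R]\mik\eta$. Moreover, $P$ takes a constant value $c_V\in\mathbb{T}$ on each $V\in\mathcal{V}$. Write $f:=\mathbbm{1}_{\Gcal}-\prob[\Gcal]$ and $N:=2^{|\binom{[n]}{\mik 2}|}$. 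Splitting the expectation according to the partition $\mathcal{V}\cup\{R\}$ gives
\[
\mathbb{E}\big[f\exp(2\pi i P)\big]=\sum_{V\in\mathcal{V}}\frac{|V|}{N}\,\exp(2\pi i c_V)\,\big(\prob[\Gcal\,|\,V]-\prob[\Gcal]\big)+\frac{1}{N}\sum_{x\in R}f(x)\exp\big(2\pi i P(x)\big).
\]
Since $|f|\mik 1$, the last term has modulus at most $\prob[R]\mik\eta$.

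Next, control the discrepancies $a_V:=\prob[\Gcal\,|\,V]-\prob[\Gcal]$. By Lemma \ref{lem_decreasing_stong_codes} and the hypothesis $\Delta^\circ_m(\Ccal^\circ)-\Delta^\circ_n(\Ccal^\circ)\mik\eta$, we have $a_V\mik\Delta^\circ_m(\Ccal^\circ)-\Delta^\circ_n(\Ccal^\circ)\mik\eta$ for every $V$; this is the upper bound. For the lower bound, use the fact that the total mean is zero. Indeed,
\[
0=\mathbb{E}[f]=\sum_{V}\frac{|V|}{N}a_V+\frac1N\sum_{x\in R}f(x).
\]
Hence $\sum_V\frac{|V|}{N}a_V\meg-\eta$. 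Split $a_V=a_V^+-a_V^-$. Then $\sum_V\frac{|V|}{N}a_V^+\mik\eta$, because each $a_V^+\mik\eta$ and the weights sum to at most $1$. Consequently
\[
\sum_V\frac{|V|}{N}a_V^-=\sum_V\frac{|V|}{N}a_V^+-\sum_V\frac{|V|}{N}a_V\mik\eta+\eta=2\eta.
\]

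Finally, bound the main term by the triangle inequality:
\[
\Big|\sum_V\frac{|V|}{N}e^{2\pi i c_V}a_V\Big|\mik\sum_V\frac{|V|}{N}|a_V|=\sum_V\frac{|V|}{N}\big(a_V^++a_V^-\big)\mik\eta+2\eta=3\eta.
\]
Adding the leftover contribution $\eta$ gives the bound $4\eta$ of \eqref{e6.2}.

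The only nontrivial point is the one-sided nature of the extremality information. Extremality only bounds the relative densities $\prob[\Gcal\,|\,V]$ from above. The averaging identity $\mathbb{E}[f]=0$ is what converts this into control of $\sum_V\frac{|V|}{N}|a_V|$. Everything else is supplied by Theorem \ref{thm_part_nclas_pol} and Lemma \ref{lem_decreasing_stong_codes}.
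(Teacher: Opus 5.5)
Your proof is correct. It uses the same ingredients as the paper: Theorem \ref{thm_part_nclas_pol}, Lemma \ref{lem_decreasing_stong_codes}, and the identity $\mathbb{E}[\mathbbm{1}_{\Gcal}-\prob[\Gcal]]=0$. Where you differ is in how you convert the one-sided extremality bound $a_V\mik\eta$ into control of a complex correlation.

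The paper first adds a constant to $P$ so that the correlation is a nonnegative real number. It then works with the weight $1+\cos(2\pi P)\in[0,2]$, which is constant on each cell. Multiplying $a_V\mik\eta$ by this nonnegative constant preserves the inequality, so cells contribute at most $2\eta$ and the leftover set at most another $2\eta$. The mean-zero identity is used only at the end, to discard the constant $1$ and recover the real part.

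You instead turn the one-sided bound into a two-sided one. The mean-zero identity gives $\sum_V\frac{|V|}{N}a_V\meg-\eta$, hence $\sum_V\frac{|V|}{N}|a_V|\mik 3\eta$, and the triangle inequality then finishes with no rotation.

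Your route gives a slightly more informative intermediate statement: an $\ell_1$-equidistribution of $\Gcal$ across the cells that does not depend on the phases $c_V$. It therefore bounds the correlation with every function of modulus at most one that is constant on the cells. The paper's route is a little more economical, since the nonnegative weight lets the one-sided bound pass straight through and the negative parts $a_V^-$ never need to be estimated. Both arrive at the same constant $4\eta$.
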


\begin{proof}
By adding a constant to $P$ if necessary, we can assume that $\mathbb{E} \big[ \big( \mathbbm{1}_{\Gcal}-\prob[\Gcal] \big)\, \exp(2\pi i P)\big]$ is a nonnegative real number.

Since $n\geqslant n_2(\eta,m,d)$, by Theorem \ref{thm_part_nclas_pol}, there is a collection $\mathcal{V}$ of pairwise disjoint $m\text{-dimensional}$ block $\mathrm{HJ}$-subspaces of $\mathbb{F}_2^{\binom{[n]}{\mik 2}}$ such that
\begin{enumerate}[label=(\roman*)]
\item\label{part-i} $\prob\Big[\mathbb{F}_2^{\binom{[n]}{\mik 2}}\setminus (\cup\mathcal{V})\Big] \mik \eta$, and
\item\label{part-ii} for every $V\in\mathcal{V}$, the polynomial $P$ is constant on $V$.
\end{enumerate}
Fix $V \in \mathcal{V}$. By Lemma \ref{lem_decreasing_stong_codes}, we have $\prob\big[\Gcal \, \big| \, V\big] \mik \Delta_m^\circ(\Ccal^\circ)$ and, consequently, 
\begin{equation} \label{eq4.002*}
\mathbb{E}\big[\mathbbm{1}_{\Gcal} - \prob[\Gcal]\, \big| \, V\big] \mik \Delta_m^\circ(\Ccal^\circ) - \prob\big[\Gcal\big] = \Delta_m^\circ(\Ccal^\circ) - \Delta_n^\circ(\Ccal^\circ) \leqslant \eta.
\end{equation}
Set $f := 1+\cos(2\pi P)$. By \ref{part-ii}, the function $f$ is constant on $V$; denote by $\lambda_V$ this constant value. Since $0 \leqslant \lambda_V \leqslant 2$, we have
\begin{equation} \label{eq4.003*}
\mathbb{E}\big[ \big(\mathbbm{1}_{\Gcal} - \prob[\Gcal]\big)f \, \big| \, V\big] = \lambda_V\,\mathbb{E}\big[\mathbbm{1}_{\Gcal} - \prob[\Gcal]\, \big| \, V\big] \stackrel{\eqref{eq4.002*}}{\leqslant} 2 \eta.
\end{equation}
Using the fact that the elements of $\mathcal{V}$ are pairwise disjoint, we deduce that
\begin{equation} \label{eq4.004*}
\mathbb{E}\big[ \big(\mathbbm{1}_{\Gcal} - \prob[\Gcal]\big)f \, \big| \, (\cup\mathcal{V})\big] \leqslant 2\eta.
\end{equation}
Hence,
\begin{multline} \label{eq4.005*}
\mathbb{E}\big[\big(\mathbbm{1}_{\Gcal} - \prob[\Gcal]\big)f\big]  \\
\leqslant \mathbb{E}\big[\big(\mathbbm{1}_{\Gcal} - \prob[\Gcal]\big)f \, \big| \, (\cup\mathcal{V})\big]  + \mathbb{E}\Big[\big(\mathbbm{1}_{\Gcal} - \prob[\Gcal]\big)f \, \Big| \, \mathbb{F}_2^{\binom{[n]}{\mik 2}}\setminus (\cup\mathcal{V})\Big] \cdot \prob\Big[\mathbb{F}_2^{\binom{[n]}{\mik 2}}\setminus (\cup\mathcal{V})\Big] \\
\stackrel{\eqref{eq4.004*}, \, \ref{part-i}}{\mik} 2\eta + 2\eta = 4\eta.
\end{multline}
Since $\mathbb{E}\big[\mathbbm{1}_{\Gcal} - \prob[\Gcal]\big] = 0$, it follows that
\begin{equation} \label{eq4.006*}
\operatorname{Re}\Big(\mathbb{E} \big[ \big( \mathbbm{1}_{\Gcal}-\prob[\Gcal] \big)\, \exp(2\pi i P)\big]\Big) = \mathbb{E}\big[\big(\mathbbm{1}_{\Gcal} - \prob[\Gcal]\big)\,(f - 1)\big] \leqslant 4\eta,
\end{equation}
where, as usual, $\operatorname{Re}(z)$ denotes the real part of $z$. The result follows from our starting assumption that $\mathbb{E} \big[ \big( \mathbbm{1}_{\Gcal}-\prob[\Gcal] \big)\, \exp(2\pi i P)\big]$ is a nonnegative real number.
\end{proof}

We are ready to complete the proof of Theorem \ref{thm:MainCliques}.

\begin{proof}[Completion of the proof of Theorem \ref{thm:MainCliques}]
As we have already pointed out, by Fact \ref{f1.11}, the sequence $\big(\Delta_n^\circ(\Ccal^\circ)\big)$ is non-increasing and bounded, and consequently, it is convergent. Using this observation, the proof follows by the inverse theorem for the $U_d$-norm (Theorem~\ref{thm:inverse}) together with Lemma \ref{lem:poly_cor}.
\end{proof}

\begin{rem}
Let $\mathcal{P}$ be a property of subsets of sets of the form $\Ftwo{\binom{[n]}{\leqslant 2}}$, where $n \geqslant 2$ is an integer. We say that the property $\mathcal{P}$ is \textit{$\mathrm{HJ}$-hereditary} if it is preserved under taking preimages by $\mathrm{HJ}$-embeddings (see Subsection \ref{subsec2.2}). For instance, the property of being a $\Ccal^\circ$-$\mathrm{HJ}$-code and the property of being a $\Ccal^\circ$-code are both $\mathrm{HJ}$-hereditary. If $\mathcal{P}$ is a $\mathrm{HJ}$-hereditary property and $n \geqslant 2$ is an integer, then we say that a subset $\mathcal{G} \subseteq \Ftwo{\binom{[n]}{\leqslant 2}}$ is \textit{$\mathcal{P}$-extremal} if it satisfies $\mathcal{P}$ and has maximum density among subsets of $\Ftwo{\binom{[n]}{\leqslant 2}}$ satisfying $\mathcal{P}$. The proof of Theorem \ref{thm:MainCliques} immediately generalizes to show that $\mathcal{P}$-extremal sets are higher order uniform in the following sense: for every integer $d\geqslant 2$ and every $\varepsilon>0$, there exists a positive integer $n_0=n_0(d,\varepsilon)$ such that, for all $n\geqslant n_0$, if\, $\Gcal\subseteq \mathbb{F}_2^{\binom{[n]}{\leqslant 2}}$ is a $\mathcal{P}$-extremal,~then
\begin{equation} \label{e6.14}
\big\|\mathbbm{1}_{\Gcal}-\prob[\Gcal]\big\|_{U_d} \mik \varepsilon.
\end{equation}
In particular, extremal $\Ccal^\circ$-codes are also higher order uniform. We also note that our approach extends to spaces of the form $\Ftwo{\binom{n}{\leqslant k}}$, where $k\geqslant 2$ is any fixed integer.
\end{rem}

\appendix

\section{ } \label{sec-appendix}

We start by recalling some definitions related to the polynomial Hales--Jewett theorem \cite{BL99}. For every triple $n,d,k$ of positive integers with $k\meg 2$, and let $W(n,d,k)$ denote the set of all maps $w \colon [n]^d \to [k]$; we shall refer to the elements of $W(n,d,k)$ as \emph{polynomial words over $[k]$}. A \emph{polynomial variable word $v$ of\, $W(n,d,k)$} is a map $v\colon [n]^d \to [k]\cup\{x\}$, where $x$ is a symbol not belonging to $[k]$, such that $v^{-1}(\{x\}) = X^d$ for some nonempty subset $X$ of $[n]$. Given a polynomial variable word $v$ of $W(n,d,k)$ and $a \in [k]$, let $v(a)$ denote the element $W(n,d,k)$ obtained by replacing every occurrence of the symbol $x$ in $v$ by $a$. A \emph{polynomial combinatorial line of $W(n,d,k)$} is a set of the form
$\big\{v(a)\colon a\in [k]\big\}$, where $v$ is a polynomial variable word of $W(n,d,k)$. We are ready to recall the density polynomial Hales--Jewett conjecture.

\begin{DPHJ-conjecture}[Bergelson \cite{Ber96}]
For every pair $d,k$ of positive integers with $k\meg 2$ and every\, $0<\delta\mik 1$, there exists a positive integer $\mathrm{DPHJ}(d,k,\delta)$ such that if $n\meg \mathrm{DPHJ}(d,k,\delta)$, then any subset of $W(n,d,k)$ with cardinality at least $\delta\, k^{n^d}$ contains a polynomial combinatorial line of\, $W(n,d,k)$.
\end{DPHJ-conjecture}

The case ``$d=1$" of the above conjecture is the famous density Hales--Jewett theorem due to Furstenberg--Katznelson \cite{FK91}.
However, even the first higher-dimensional case, ``$d=k=2$", is open and it is considered a major problem in density Ramsey theory \cite{Alon24,Gow09,DK16}.

Our goal in this appendix is to show that the first unknown case of the density Polynomial Hales--Jewett conjecture, ``$d=k=2$", is equivalent to an affirmative answer to Problem \ref{p1.8}. Specifically, we have the following proposition.

\begin{prop} \label{p-a.1}
The following are equivalent.
\begin{enumerate}
\item[(i)] For any $0<\delta\mik 1$, the positive integer\, $\mathrm{DPHJ}(2,2,\delta)$ exists.
\item[(ii)] Problem \ref{p1.8} has an affirmative answer; that is, if\, $\Gcal\subseteq \Ftwo{\binom{[n]}{\mik 2}}$ is a $\Ccal^\circ$-$\mathrm{HJ}$-code, then we have $\prob[\Gcal]=o_{n\to\infty}(1)$.
\end{enumerate}
\end{prop}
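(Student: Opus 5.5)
The plan is to identify $W(n,2,2)$ with $\Ftwo{[n]^2}$, sending the letter $1$ to $0$ and the letter $2$ to $1$. Under this identification, a polynomial combinatorial line is a pair $\{w_1,w_2\}$ with $w_2=w_1+\mathbbm{1}_{X\times X}$, where $w_1$ vanishes on $X\times X$. For a word $w$, write $U(w)\in\Ftwo{\binom{[n]}{\mik 2}}$ for its ``upper part'': the value on $\{i,j\}$ with $i<j$ is $w(i,j)$, and the loop $\{i\}$ gets $w(i,i)$. The lower entries $w(j,i)$, $i<j$, form the remaining coordinates. Thus a line is a pair of words with the following three properties. Their upper parts satisfy $U(w_2)=U(w_1)+K_X^\circ$ with $U(w_2)\supseteq U(w_1)$. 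Their lower parts also differ exactly on $X$. The lower part of $w_1$ vanishes on the pairs inside $X$.

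\emph{(i)$\Rightarrow$(ii).} This direction is a direct lift. Given a $\Ccal^\circ$-$\mathrm{HJ}$-code $\Gcal$ of density $\delta$, put $A:=\{w\colon U(w)\in\Gcal\}$. Then $A$ has density exactly $\delta$. If $A$ contained a line $\{w_1,w_2\}$, then $U(w_2)\supseteq U(w_1)$ and $U(w_2)+U(w_1)=K_X^\circ$, which contradicts the code property. Hence $n<\mathrm{DPHJ}(2,2,\delta)$, so $\delta_n\to0$.

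\emph{(ii)$\Rightarrow$(i).} Here I argue by contradiction: let $A\subseteq\Ftwo{[n]^2}$ have density $\meg\delta$, contain no line, and have $n$ large. Fix $m$ so large that, by (ii), every $\Ccal^\circ$-$\mathrm{HJ}$-code in $\Ftwo{\binom{[m]}{\mik2}}$ has density $<\delta/4$. For $Y\in\binom{[n]}{m}$, call $w$ \emph{symmetric on $Y$} if $w(i,j)=w(j,i)$ for all $i,j\in Y$. The words symmetric on $Y$ split into cosets: fix all coordinates outside $Y\times Y$, and let the symmetric part on $Y$ range freely. Each coset is parametrized bijectively by $\Ftwo{\binom{Y}{\mik 2}}$. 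Inside a coset, take two graphs $G_2\subseteq G_1$ on $Y$ with $G_1+G_2=K_X^\circ$. The corresponding words differ by $\mathbbm{1}_{X\times X}$, and the smaller one vanishes on $X\times X$; so they form a line. Therefore every section of $A$ on such a coset is a $\Ccal^\circ$-$\mathrm{HJ}$-code, and so has relative density $<\delta/4$. It remains to find $Y$ and a coset where $A$ has relative density $\meg\delta/4$.

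To do this, I would average over $Y$. For $w\in\Ftwo{[n]^2}$, let $E_w$ be the equality graph $\{\{i,j\}\colon w(i,j)=w(j,i)\}$. Under the uniform measure, $E_w$ is a $G(n,1/2)$ random graph. Choose $(w,Y)$ uniformly among pairs with $w$ symmetric on $Y$. Then the density of $A$ in this measure equals $\ave_w[\mathbbm{1}_A(w)\,N_m(E_w)]/\ave_w[N_m(E_w)]$, where $N_m$ counts $m$-cliques. By Janson's inequality, or any lower-tail bound for $K_m$-counts in $G(n,1/2)$ with $m$ fixed, we have $N_m(E_w)\meg\frac12\ave[N_m]$ outside a set of measure $o_{m;n\to\infty}(1)$. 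Hence the weighted density of $A$ is at least $\delta/2-o(1)\meg\delta/4$ for $n$ large. By averaging, some pair of a set $Y$ and a coset symmetric on $Y$ carries relative density $\meg\delta/4$. This contradicts the previous paragraph.

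The main obstacle is exactly this symmetrization step. A line forces the lower entries inside $X$ to agree with the upper ones, so one cannot simply fiber over the lower triangle, since those entries are random. The concentration of clique counts in the equality graph is what lets us restrict to symmetric blocks without losing density. I expect any difficulty in the write-up to be in setting up this weighted averaging cleanly, so that the conditional densities on cosets genuinely average to the weighted density above.
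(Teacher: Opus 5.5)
Your proposal is correct. For (i)$\Rightarrow$(ii) you do exactly what the paper does: pull $\Gcal$ back along the upper-triangle map.

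For (ii)$\Rightarrow$(i), both arguments end the same way. A line-free $A$, restricted to a coset of words symmetric on an $m$-set, pulls back to a $\Ccal^\circ$-$\mathrm{HJ}$-code. Where you differ is in how a dense symmetric coset is found.

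\emph{The paper's route.} Let $m$ denote your clique size, which is the paper's $n_1$. The paper takes $\ell\approx 2^{m^2+3}/\delta^2$ disjoint consecutive blocks $I_i$ of size $m$. It applies the conditional concentration lemma of \cite{DKT16,DTV23} to the sets $I_i\times I_i$. This gives a block $I_{i_0}$ such that \emph{every} fixing of the coordinates in $I_{i_0}\times I_{i_0}$ leaves relative density at least $\delta/2$. Symmetric fixings are then a special case, and one averages over them.

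\emph{Your route.} You average over all $m$-sets $Y$. This reweights $w$ by the number of $m$-cliques of the equality graph $E_w\sim G(n,1/2)$. Concentration of $K_m$-counts then shows you lose only a factor of about $2$. Chebyshev with the usual variance estimate already suffices here; Janson is not needed.

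The averaging step you were worried about is clean. Every $Y$ admits exactly $2^{n^2-\binom{m}{2}}$ symmetric words, and all cosets have the same size $2^{\binom{m}{2}+m}$. So the weighted density is literally an average of coset densities.

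\emph{What each buys.} The paper's route is uniform over all fixings of the block. It is therefore insensitive to which constraint is imposed on the block, and it rests on a general, reusable tool. Yours is self-contained apart from a textbook random-graph estimate. It is tailored to the symmetry constraint, since it uses that this constraint is a product of independent events on pairs. It is also quantitatively better: it needs $n$ only single-exponential in $m$, whereas the paper needs $n\gtrsim m\,2^{m^2}/\delta^2$.
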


\begin{proof}
We first argue for the implication (i)$\Rightarrow$(ii); so, assume that, for any $0<\delta\mik 1$, the positive integer $\mathrm{DPHJ}(2,2,\delta)$ exists. It enough to show that, for any $0<\delta\mik 1$ and any integer $n\geqslant \mathrm{DPHJ}(2,2,\delta)$, if $\mathcal{G}\subseteq \mathbb{F}_2^{\binom{[n]}{\mik 2}}$ satisfies $\prob[\Gcal]\meg \delta$, then $\mathcal{G}$ is not a $\Ccal^\circ$-$\mathrm{HJ}$-code. Indeed, set
\begin{equation} \label{e.a.1}
D := \bigg\{ x\in \mathbb{F}_2^{[n]^2} \colon \Big\langle x(\min e, \max e)\colon e\in \binom{[n]}{\mik 2} \Big\rangle \in\mathcal{G} \bigg\}.
\end{equation}
Observe that $|D|\meg \prob[\Gcal]\cdot 2^{[n]^2} \meg \delta\, 2^{n^2}$; also notice that $D$ contains a polynomial combinatorial line if and only if $\mathcal{G}$ is not a $\Ccal^\circ$-$\mathrm{HJ}$-code. Since $n\geqslant \mathrm{DPHJ}(2,2,\delta)$, it follows that $\mathcal{G}$ is not a $\Ccal^\circ$-$\mathrm{HJ}$-code, as desired.

We proceed to show that (ii)$\Rightarrow$(i). Let $\big(\Delta_n^\circ(\Ccal^\circ)\big)$ be the sequence defined in \eqref{e1.11}, and note that our assumption in this case is equivalent to saying that the sequence $\big(\Delta_n^\circ(\Ccal^\circ)\big)$ converges to zero. We will need a ``conditional concentration" estimate that originates from \cite{DKT16}; the version stated below is taken from \cite[Lemma 8.1]{DTV23}.

\begin{lem}\label{lem_regularity}
Let $0<\varepsilon \mik 1$, and let $\ell,m$ be positive integers such that
\begin{equation} \label{eq:6.01}
\ell\meg \frac{2^{m+1}}{\varepsilon^2}.
\end{equation}
Let $I$ be a nonempty finite set, and let $\prob$ denote the uniform probability measure on $\{0,1\}^I$. Also let $D_1,\dots,D_\ell$ be pairwise disjoint nonempty subsets of $I$ each with at most $m$ elements. If $A$ is any subset of $\{0,1\}^I$, then there exists $i_0 \in[\ell]$ such that, for every $x\in \{0,1\}^{D_{i_0}}$, setting\, $S_x := \big\{ y\cup x \colon y\in \{0,1\}^{I\setminus D_{i_0}}\big\}$, we have
\begin{equation} \label{e.a.3}
\big| \prob\big[A\, \big|\, S_x\big] - \prob[A]\big| \mik \varepsilon.
\end{equation}
\end{lem}
Now assume, towards a contradiction, that part (i)  does not hold true, that is, there exists $\delta>0$ such that, for every positive integer $n_0$, there exist an integer $n\geqslant n_0$ and a subset $A\subseteq \mathbb{F}_2^{[n]^2}$ with
$\prob[A]\geqslant\delta$ such that $A$ contains no combinatorial line.

Let $n_1$ be an arbitrary positive integer, and set
\begin{equation} \label{e.a.4}
m := n_1^2, \ \ \ \ell := \left\lceil\frac{2^{m+1}}{(\delta/2)^2}\right\rceil \ \ \ \text{ and } \ \ \ n_0:= n_1 \ell;
\end{equation}
moreover, for every $i\in [\ell]$, set
\begin{equation} \label{e.a.5}
I_i := \big\{(i-1)n_1+1,\dots, i n_1\big\} \ \ \ \text{ and } \ \ \  D_i := I_i\times I_i.
\end{equation}
By our assumption that part (i) does not hold true, we may select an integer $n\geqslant n_0$ and a set $A\subseteq \mathbb{F}_2^{[n]^2}$ with $\prob[A]\meg \delta$ that contains no polynomial combinatorial line. By Lemma \ref{lem_regularity}, there exists $i_0\in[\ell]$ such that, for every $x\in \mathbb{F}_2^{D_{i_0}}$,
\begin{equation}\label{eq:022}
\prob\big[A\, \big|\, S_x\big] \geqslant \frac{\delta}{2},
\end{equation}
where $S_x = \Big\{y\cup x \colon y\in \mathbb{F}_2^{I\setminus D_{i_0}}\Big\}$. Write $I_{i_0}$ in increasing order as $\{k_1 <\cdots < k_{n_1}\}$, and set
\begin{equation} \label{e.a.7}
\mathcal{S} := \Big\{ x \in \mathbb{F}_2^{D_{i_0}} \colon x(k_i , k_j) = x(k_j , k_i)  \text{ for all } i,j\in[n_1] \Big\}
\ \ \ \text{ and } \ \ \ \mathcal{Y} := \mathbb{F}_2^{[n]^2 \setminus D_{i_0}}.
\end{equation}
Moreover, setting $V^{\mathcal{S}}_y := \big\{ y \cup x \colon x \in \mathcal{S}\big\}$ for every $y\in\mathcal{Y}$, by \eqref{eq:022}, we have
\begin{equation} \label{e.a.8}
\frac{\delta}{2} \mik  \underset{x\in\mathcal{S}}{\ave} \Big[ \prob\big[ A\, \big|\, S_x\big]\Big] =
 \underset{y\in\mathcal{Y}}{\ave}\Big[ \prob\big[ A\, \big|\, V^{\mathcal{S}}_y\big]\Big].
\end{equation}
Thus, there exists $y_0\in\mathcal{Y}$ such that $\prob\big[ A\, \big|\, V_{y_0}^{\mathcal{S}}\big]\meg \delta/2$; note that, since $A$ contains no polynomial combinatorial line, the set $A\cap V_{y_0}^{\mathcal{S}}$ also does not contain a polynomial combinatorial line.

For every $z\in \Ftwo{\binom{[n_1]}{\mik 2}}$, define $x_z\in \mathbb{F}_2^{D_{i_0}}$ by setting $x_z(k_i,k_j) = z(\{i,j\})$ for all $i,j\in [n_1]$. Moreover, for every $z\in \Ftwo{\binom{[n_1]}{\mik 2}}$, set $u_z := y_0 \cup x_z$. Clearly, $x_z\in \mathcal{S}$, and $u_z \in V_{y_0}^{\mathcal{S}}$ for every $z\in \Ftwo{\binom{[n_1]}{\mik 2}}$. Finally, define
\begin{equation} \label{e.a.10}
\Gcal := \Big\{ z\in \Ftwo{\binom{[n_1]}{\mik 2}} \colon u_z \in A \Big\};
\end{equation}
Then, $\prob[\Gcal] = \prob\big[A\, \big|\, V_{y_0}^{\mathcal{S}}\big] \meg \delta/2$ and, since $A$ contains no polynomial combinatorial line, the family $\Gcal$ is a $\Ccal^\circ$-$\mathrm{HJ}$-code. Therefore, $\Delta_{n_1}^{\circ}(\Ccal^\circ) \geqslant \delta/2$ and, since $n_1$ was an arbitrary positive integer, this contradicts our assumption that the sequence $\big( \Delta_n^{\circ}(\Ccal^\circ)\big)$ converges to zero. The proof of the proposition is thus completed.
\end{proof}

\subsection*{Acknowledgments}

The research was supported by the Hellenic Foundation for Research and Innovation (H.F.R.I.) under the “2nd Call for H.F.R.I. Research Projects to support Faculty Members \& Researchers” (Project Number: HFRI-FM20-02717). No\'e de Rancourt additionally acknowledges support from the Labex CEMPI (ANR-11-LABX-0007-01) and the CDP C2EMPI, together with the French State under the France-2030 programme, the University of Lille, the Initiative of Excellence of the University of Lille, the European Metropolis of Lille for their funding and support of the R-CDP-24-004-C2EMPI project.


\begin{thebibliography}{99}

\bibitem[Alon24]{Alon24}
N. Alon,
\emph{Graph-codes},
European J. Combin. 116 (2024), Article ID 103880, 7 p.

\bibitem[Ber96]{Ber96}
V. Bergelson,
\emph{Ergodic Ramsey theory---an update},
in ``Ergodic Theory of $\mathbb{Z}^d$-Actions"\!, London Mathematical Society Lecture Note Series, Vol. 228, Cambridge University Press, 1996, 1--61.

\bibitem[BL99]{BL99}
V. Bergelson and A. Leibman,
\emph{Set-polynomials and polynomial extension of the Hales--Jewett theorem},
Ann. Math. 150 (1999), 33--75.

\bibitem[CGW88]{CGW88}
F. R. K. Chung, R. L. Graham and R. M. Wilson,
\emph{Quasi-random graphs},
Proc. Natl. Acad. Sci. USA 85 (1988), 969--970.

\bibitem[CGW89]{CGW89}
F. R. K. Chung, R. L. Graham and R. M. Wilson,
\textit{Quasi-random graphs},
Combinatorica 9 (1989), 345--362.

\bibitem[DK16]{DK16}
P. Dodos and V. Kanellopoulos,
\emph{Ramsey Theory for Product Spaces},
Mathematical Surveys and Monographs, Vol. 212, American Mathematical Society, 2016.

\bibitem[DKT16]{DKT16}
P. Dodos, V. Kanellopoulos and K. Tyros,
\textit{A concentration inequality for product spaces},
J. Funct. Anal. 270 (2016), 609--620.

\bibitem[DTV23]{DTV23}
P. Dodos, K. Tyros and P. Valettas,
\emph{Concentration estimates for functions of finite high-dimensional random arrays},
Random Struct. Algorithms 63 (2023), 997--1053.

\bibitem[ES35]{ES35}
P. Erd\H{o}s and G. Szekeres,
\emph{A combinatorial problem in geometry},
Compositio Math. 2 (1935), 463--470.

\bibitem[FK91]{FK91}
H. Furstenberg and Y. Katznelson,
\emph{A density version of the Hales--Jewett theorem},
J. Anal. Math. 57 (1991), 64--119.

\bibitem[Gow01]{Gow01}
W. T. Gowers,
\emph{A new proof of Szemer\'{e}di's theorem},
Geom. Funct. Anal. 11 (2001), 465--588.

\bibitem[Gow09]{Gow09}
W. T. Gowers,
\textit{The first unknown case of polynomial DHJ},
blog post (2009), available at
\url{https://gowers.wordpress.com/2009/11/14/the-first-unknown-case-of-polynomial-dhj/}.

\bibitem[GGMT25]{GGMT25}
W. T. Gowers, B. Green, F. Manners and T. Tao,
\emph{On a conjecture of Marton},
Ann. Math. 201 (2025), 515--549.

\bibitem[HHL19]{HHL19}
H. Hatami, P. Hatami, S. Lovett,
\emph{Higher-order Fourier analysis and applications},
Found. Trends Theor. Comput. Sci. 13 (2019), 247--448.

\bibitem[O'Don14]{O'Don14}
Ryan O’Donnell,
\emph{Analysis of Boolean Functions},
Cambridge University Press, 2014.

\bibitem[Ra30]{Ra30}
F. P. Ramsey,
\emph{On a problem of formal logic},
Proc. London Math. Soc. 30 (1930), 264--286.

\bibitem[R\H{o}dl15]{Ro15}
V. R\H{o}dl,
\emph{Quasi-randomness and the regularity method in hypergraphs},
in ``Proceedings of the International Congress of Mathematicians" Vol. I, 571--599, 2015.

\bibitem[S\'{o}s13]{Sos13}
V. T. S\'{o}s,
\emph{Induced subgraphs and Ramsey colorings, 2013},
presented at the 16th International Conference on Random Structures and Algorithms, available at
\url{https://web.archive.org/web/20150910071523/http://rsa2013.amu.edu.pl/abstracts/Sos.Vera.pdf}.

\bibitem[Tao08]{Tao08}
T. Tao,
\emph{Structure and Randomness: Pages from Year One of a Mathematical Blog},
American Mathematical Society, Providence, RI, 2008.

\bibitem[TV06]{TV06}
T. Tao and V. Vu,
\textit{Additive Combinatorics},
Cambridge Studies in Advanced Mathematics, Vol. 105, Cambridge University Press, 2006.

\bibitem[TZ12]{TZ12}
T. Tao and T. Ziegler,
\emph{The inverse conjecture for the Gowers norm over finite fields in low characteristic},
Ann. Comb. 16 (2012), 121--188.

\bibitem[Tho87]{Tho87}
A. Thomason,
\emph{Pseudo-random graphs},
Ann. Discrete Math. 33 (1987), 307--331.


\end{thebibliography}
\end{document}